\DeclareMathOperator{\id}{id}
\DeclareMathOperator{\Spec}{Spec}
\DeclareMathOperator{\im}{im}
\DeclareMathOperator{\CH}{CH}
\DeclareMathOperator{\carac}{char}
\DeclareMathOperator{\Hom}{Hom}
\DeclareMathOperator{\Aut}{Aut}
\DeclareMathOperator{\GL}{GL}
\DeclareMathOperator{\PGL}{PGL}
\DeclareMathOperator{\End}{End}
\DeclareMathOperator{\trdeg}{tr.deg.}
\DeclareMathOperator{\R}{R}
\newcommand{\colim}[1]{\mycolim_{#1}}
\DeclareMathOperator*{\mycolim}{colim}
\newcommand{\Tan}{T}
\newcommand{\Oc}{\mathcal{O}}
\newcommand{\Ec}{\mathcal{E}}
\newcommand{\Lc}{\mathcal{L}}
\newcommand{\Fc}{\mathcal{F}}
\newcommand{\Gc}{\mathcal{G}}
\newcommand{\Zz}{\mathbb{Z}}
\newcommand{\Zo}{\mathcal{Z}}
\newcommand{\Ic}{\mathcal{I}}
\newcommand{\Fp}{\mathbb{F}_p}
\newcommand{\Qq}{\mathbb{Q}}
\newcommand{\Pp}{\mathbb{P}}
\newcommand{\Aa}{\mathbb{A}}
\newcommand{\Gm}{\mathbb{G}_m}
\newcommand{\Ga}{\mathbb{G}_a}
\newcommand{\mup}{\mu_p}
\newcommand{\Laz}{\mathbb{L}}
\newcommand{\KG}[2]{K'_0({#1};{#2})}
\newcommand{\Char}[1]{\widehat{#1}}
\newcommand{\Mod}[2]{\mathcal{M}({#2};{#1})}
\newtheorem{theorem}{Theorem}[section]
\newaliascnt{proposition}{theorem}
\newtheorem{proposition}[proposition]{Proposition}
\newaliascnt{lemma}{theorem}
\newtheorem{lemma}[lemma]{Lemma}
\newaliascnt{corollary}{theorem}
\newtheorem{corollary}[corollary]{Corollary}
\theoremstyle{definition}
\newaliascnt{remark}{theorem}
\newtheorem{remark}[theorem]{Remark}
\newaliascnt{example}{theorem}
\newtheorem{example}[example]{Example}
\newaliascnt{definition}{theorem}
\newtheorem{definition}[definition]{Definition}
\newaliascnt{notation}{theorem}
\newtheorem{noname}[theorem]{}
\renewcommand{\theequation}{\thesection.\alph{equation}}
\newcommand{\rref}[1]{(\ref{#1})}
\newcommand{\dref}[2]{(\ref{#1}.\ref{#2})}
\begin{document}
\begin{abstract}
We exhibit invariants of smooth projective algebraic varieties with integer values, whose nonvanishing modulo $p$ prevents the existence of an action without fixed points of certain finite $p$-groups. The case of base fields of characteristic $p$ is included. Counterexamples are systematically provided to test the sharpness of our results.
\end{abstract}

\author{Olivier Haution}
\title{Fixed point theorems involving numerical invariants}
\email{olivier.haution at gmail.com}
\address{Mathematisches Institut, Ludwig-Maximilians-Universit\"at M\"unchen, Theresienstr.\ 39, D-80333 M\"unchen, Germany}
\thanks{This work was supported by the DFG grant HA 7702/1-1 and Heisenberg fellowship HA 7702/4-1.}

\subjclass[2010]{14L30, 14C17, 14C35}
\keywords{$p$-group actions, fixed points, Chern numbers.}
\maketitle

\section*{Introduction}
Consider a finite $p$-group $G$ acting acting on a smooth projective variety $X$ over an algebraically closed field. One of the first questions arising in this situation concerns the existence of fixed points, and finding effective methods to predict their existence is desirable. In this paper we exhibit invariants of the variety which may be used to detect fixed points. Each such invariant has the following key properties:
\begin{enumerate}[label=(\alph*)]
\item \label{req:indep} It depends solely on the intrinsic geometry of the variety $X$, and not on the particular group action.
 
\item \label{req:computation} It takes values in $\Zz$ and is accessible to computation.

\item \label{req:detect} If it is prime to $p$, then the group $G$ must fix a point of the variety $X$.
\end{enumerate}

It is instructive to start by looking at the case of finite sets, which are precisely zero-dimensional varieties (over a given algebraically closed field). They are classified by their cardinality, a numerical invariant which may be used to detect fixed points: a $p$-group acting on a finite set of cardinality prime to $p$ must fix a point. A direct generalisation of the cardinality of a finite set to higher dimensional (possibly singular and nonprojective) varieties is the so-called topological Euler characteristic, an integer $\chi(X)$ defined as the alternating sum of the $l$-adic Betti numbers of $X$ (with compact supports). Much like in the case of finite sets, this invariant satisfies the requirements \ref{req:indep} \ref{req:computation} \ref{req:detect}, provided that the characteristic of the base field differs from $p$.

In this paper, we restrict ourselves to the consideration of smooth projective varieties. This allows us to use intersection theory to produce more invariants satisfying \ref{req:indep} \ref{req:computation} \ref{req:detect}, and also to include the case of base fields of characteristic $p$. Perhaps the most natural way to construct a numerical invariant of a smooth projective variety using intersection theory is to take the degree of a product of Chern classes of its tangent bundle. Such invariants are called \emph{Chern numbers}; they satisfy the requirements \ref{req:indep} \ref{req:computation}. Our first result asserts that they also detect fixed points in the sense of \ref{req:detect}, under any of the following additional assumptions:
\begin{enumerate}[label=(\roman*)]
\item \label{charnum_ab} $G$ is abelian,
\item \label{charnum_charp} the characteristic of the base field is $p$,
\item \label{charnum_dim} $\dim X <p$.
\end{enumerate}

Another numerical invariant of a smooth projective variety $X$ is its arithmetic genus $\chi(X,\Oc_X)$. This is not quite a Chern number, but rather a $\Qq$-linear combination of Chern numbers which happens to take integral values on any smooth projective variety (this assertion is a form of the Hirzebruch-Riemann-Roch theorem). The arithmetic genus $\chi(X,\Oc_X)$ depends solely on the \emph{birational} geometry of $X$. In fact it contains all the birational information obtainable using Chern numbers: any linear combination of Chern numbers which is a birational invariant must be a multiple of the arithmetic genus \cite[Example~15.12.3 (c)]{Ful-In-98}. We prove that the invariant $\chi(X,\Oc_X)$ detects fixed points (in the sense of \ref{req:detect} above), under any of the following additional assumptions:
\begin{enumerate}[label=(\roman*')]
\item \label{chi_cycl} $G$ is cyclic,
\item \label{chi_charp} the characteristic of the base field is $p$,
\item \label{chi_dim} $\dim X <p-1$.
\end{enumerate}

Under any of the assumptions \ref{chi_cycl} \ref{chi_charp} \ref{chi_dim}, we prove in fact that the cobordism class of the variety $X$ is divisible by $p$ in the Lazard ring as soon as the group $G$ acts without fixed points on $X$. This essentially means that fixed points may be detected using any $\Qq$-linear combination of Chern numbers taking integral values on smooth projective varieties, and not just the arithmetic genus.\\

The results are stated in more details in \S\ref{sect:results}, where we also give an algebraic version of a theorem of Browder in topology \cite{Browder-PB} concerning the pull-back of fixed points under maps of degree prime to $p$. The rest of the paper is written using the language of algebraic groups. This allows us to prove more general statements (by including in particular infinitesimal groups in characteristic $p$), but also makes some arguments more transparent. For instance the conditions \ref{charnum_ab} and \ref{charnum_charp} appear as special cases of the condition that the $p$-group $G$ be trigonalisable.

A notable tool introduced in \S\ref{sect:KG} is a cohomology theory that we denote by $K_G$, which lies between the equivariant and usual $K_0$-theory. It is built using a variant of Borel's construction, where roughly speaking the space $EG$ is replaced by its generic point. The same construction may be applied to other cohomology theories (such as the Chow group), yielding a theory which keeps track of some of the equivariant information while being smaller than the Borel-type equivariant theory.\\

Even though these results are, in our view, principally of geometric nature, they also have an arithmetic component. Indeed our methods permit in some cases the detection of closed fixed points of degree prime to $p$, when the base field is not algebraically closed. This is so for Chern numbers (assuming that the base field contains enough roots of unity in case \ref{charnum_ab}), or the arithmetic genus in case \ref{chi_dim}. Strikingly, this does not work for the arithmetic genus in case \ref{chi_cycl}, nor in case \ref{chi_charp} over an imperfect base field (see \rref{ex:SB} and \rref{ex:zero_cycle} for counterexamples). This suggests a difference of nature between these situations, and may provide a reason for the absence of a uniform proof for the cases \ref{chi_cycl} and \ref{chi_dim}.

\paragraph{\textbf{Acknowledgements.}} I thank the anonymous referee for his/her careful reading and constructive comments.

\section{Statement of results}
\numberwithin{theorem}{subsection}
\numberwithin{lemma}{subsection}
\numberwithin{proposition}{subsection}
\numberwithin{corollary}{subsection}
\numberwithin{example}{subsection}
\numberwithin{notation}{subsection}
\numberwithin{definition}{subsection}
\numberwithin{remark}{subsection}
\counterwithin{equation}{subsection}
\renewcommand{\theequation}{\thesubsection.{\alph{equation}}}

\label{sect:results}

Here we summarise the main results of the paper. In order to make the statements as simple as possible, we assume in this section that $G$ is an ordinary finite $p$-group and that the base field $k$ is algebraically closed. A $G$-action on a ($k$-)variety $X$ is a group morphism $G \to \Aut_k(X)$. A coherent $\Oc_X$-module $\Fc$ is called $G$-equivariant if it is endowed with isomorphisms $\tau_g \colon g^*\Fc \to \Fc$ for $g \in G$, satisfying the cocycle condition $\tau_h \circ h^*(\tau_g) = \tau_{gh}$ for every $g,h \in G$.

\subsection{Equivariant cycles and Chern numbers}
\label{sect:Chern}
In the next statement $\CH_G$ denotes the equivariant Chow group of \cite{EG-Equ}.

\begin{theorem}
\label{th:constant_CH}
Let $X$ be a projective variety with an action of $G$. Assume that one of the following conditions holds:
\begin{enumerate}[label=(\roman*), ref=\roman*]
\item \label{th:constant_CH:ab} $G$ is abelian.
\item \label{th:constant_CH:p} $\carac k =p$.
\item \label{th:constant_CH:dim} $\dim X <p-1$.
\end{enumerate}
Then $X(k)^G = \varnothing$ if and only if every element of  the image of the morphism $\CH_G(X) \to \CH(X)$ has degree divisible by $p$.
\end{theorem}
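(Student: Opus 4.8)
The plan is to prove the two implications separately. For the easy direction, suppose $X(k)^G \neq \varnothing$, so there is a $G$-fixed point $x \in X(k)$. The inclusion $\{x\} \hookrightarrow X$ is $G$-equivariant, giving a commutative square relating $\CH_G(\{x\}) \to \CH(\{x\})$ and $\CH_G(X) \to \CH(X)$, compatible with proper pushforward to $\Spec k$. The class $[x] \in \CH_0(X)$ has degree $1$, and it lifts to the equivariant class $[x]^G \in \CH_G(\{x\}) \to \CH_G(X)$ (a fixed point carries a canonical equivariant structure). Hence the image of $\CH_G(X) \to \CH(X)$ contains an element of degree $1$, which is not divisible by $p$. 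So this direction needs no hypothesis on $G$, $k$, or $\dim X$.

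For the hard direction, assume $X(k)^G = \varnothing$; I want to show every element in the image of $\CH_G(X) \to \CH(X)$ has degree divisible by $p$. The degree map factors as $\CH(X) \to \CH(\Spec k) = \Zz$ via proper pushforward $\pi_*$ along $\pi\colon X \to \Spec k$, and this is compatible with an equivariant pushforward $\CH_G(X) \to \CH_G(\Spec k)$. The first step is to identify $\CH_G(\Spec k)$: for $G$ a finite $p$-group, $\CH_G(\Spec k) = \CH^*(BG)$ is, after inverting nothing, an augmented $\Zz$-algebra whose augmentation ideal is generated by classes that are torsion of $p$-power order in positive codimension — more precisely, $\CH_G(\Spec k) \to \CH(\Spec k) = \Zz$ has the property that anything pulled back from $\CH_G(\Spec k)$ and then pushed to $\Zz$ lands in $p\Zz$ unless it already comes from codimension zero, i.e. from the class of a point with trivial action. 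So it suffices to show that the image of $\CH_G(X) \to \CH(X) \to \Zz$ is controlled by an equivariant zero-cycle, and then that in the absence of fixed points every equivariant zero-cycle on $X$ has degree in $p\Zz$. The key geometric input is a localization / stratification argument: since $G$ has no fixed points on $X$, every point of $X$ has a nontrivial isotropy subgroup but also a nontrivial "moving" direction, so one can filter $X$ by $G$-stable closed subsets whose successive complements admit free quotients by nontrivial subgroups, forcing divisibility by $p$ degreewise. This is where the three hypotheses \ref{th:constant_CH:ab}, \ref{th:constant_CH:p}, \ref{th:constant_CH:dim} enter: they are exactly what is needed to run the induction — abelianness (or trigonalisability, in the body of the paper) lets one reduce to $\mup$-actions via a chain of normal subgroups; characteristic $p$ gives the same via infinitesimal subgroup schemes and the structure of $\CH_G$; and $\dim X < p-1$ makes the relevant obstruction classes in $\CH^i(BG)$ vanish for degree reasons, since $p$-torsion first appears in $\CH^*(B\mup)$ in codimension $p-1$.

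Concretely, I expect the proof to reduce, in all three cases, to the case $G = \mup = \Zz/p$ acting on $X$ without fixed points, where one uses the explicit computation $\CH_G(\Spec k) = \Zz[t]/(pt)$ with $\deg t$ (or its self-intersections) being the source of $p$-divisibility, combined with the fact that a fixed-point-free $\mup$-action makes $X \to X/\mup$ an fppf $\mup$-torsor (étale in the characteristic $\neq p$ case) so that the equivariant Chow groups are computed by the quotient; then the composite $\CH_{\mup}(X) \to \CH(X) \xrightarrow{\deg} \Zz$ factors through multiplication by $p$. The main obstacle is organizing the dévissage uniformly: the reduction from a general abelian (or trigonalisable) $p$-group to $\mup$ requires a spectral-sequence or filtration argument on $\CH_G$ that is genuinely different from the characteristic-$p$ argument (which exploits infinitesimal group schemes and Frobenius) and from the low-dimensional argument (which is a pure degree/vanishing count). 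I would therefore structure the write-up so that all three cases funnel into a single key lemma about $\mup$-torsors and the degree map, and handle the three-way split only at the level of "how to get down to $\mup$." Everything on the easy side is formal; everything hard is in that dévissage and in the input computation of $\CH^*(B\mup)$.
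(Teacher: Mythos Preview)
Your easy direction is correct and matches the paper.

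For the hard direction, there are genuine gaps. First, your plan to factor through $\CH_G(\Spec k)$ and use structural facts about $\CH^*(BG)$ does not get off the ground: in degree zero, $\CH_G^0(\Spec k)\to\CH^0(\Spec k)=\Zz$ is an isomorphism, so nothing forces $p$-divisibility there. Your specific claim that ``$p$-torsion first appears in $\CH^*(B\mu_p)$ in codimension $p-1$'' is false: $\CH^*(B\mu_p)\cong\Zz[t]/(pt)$ with $t$ in degree $1$, so the torsion is already there in codimension $1$. This undermines your proposed explanation of why the bound $\dim X<p-1$ is relevant.

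Second, the paper does \emph{not} funnel all three cases into a reduction to $\mu_p$. Cases \eqref{th:constant_CH:ab} and \eqref{th:constant_CH:p} are handled together via the observation that $G$ is trigonalisable, and the key step you are missing is Proposition~\ref{prop:equiv_forget}: for trigonalisable $G$, the image of $\CH_G(X)\to\CH(X)$ equals the image of the subgroup $\Zo_G(X)$ generated by classes of equidimensional $G$-invariant closed subschemes. This immediately reduces the degree question to zero-dimensional $G$-invariant subschemes, which (over algebraically closed $k$) are disjoint unions of orbits $G/H$; when $X^G=\varnothing$ each such orbit has $H\neq G$, hence degree divisible by $p$. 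No d\'evissage to $\mu_p$ is needed, and in particular in characteristic $p$ the group $(\Zz/p)_k$ is not $\mu_p$, so your proposed reduction would not even make sense there.

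Case \eqref{th:constant_CH:dim} does use a reduction via central subgroups isomorphic to $\mu_p$, but the dimension bound enters through a completely different mechanism than you suggest: the paper passes through an auxiliary filtered theory $K_G$ (\S\ref{sect:KG}) and uses the identity $\sum_{i=0}^{p-1}[\Lc]^i\equiv(1-[\Lc])^{p-1}\pmod p$ in $K_0$, together with the fact that $(1-[\Lc])$ lowers the dimension filtration; since $\dim(X/\mu_p)\le p-2$, the term $(1-[\Lc])^{p-1}$ kills everything. That is the real source of the exponent $p-1$, not any feature of $\CH^*(B\mu_p)$.
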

\begin{proof}
The class of a fixed closed point belongs to $\im(\CH_G(X) \to \CH(X))$ and has degree one. The converse follows from \rref{th:eq_deg} in cases \eqref{th:constant_CH:ab} and \eqref{th:constant_CH:p}, since $G$ is trigonalisable (see \S\ref{ex:trig}), and from \dref{th:smalldim}{th:smalldim:CH} in case \eqref{th:constant_CH:dim}.
\end{proof}

A \emph{Chern number} of a smooth projective variety is the degree of a product of Chern classes (with values in the Chow group) of its tangent bundle.
\begin{corollary}
\label{cor:constant_CH}
Let $X$ be a smooth projective variety with an action of $G$. Assume that one of the following conditions holds:
\begin{enumerate}[label=(\roman*), ref=\roman*]
\item \label{cor:constant_CH:ab} $G$ is abelian.
\item \label{cor:constant_CH:p} $\carac k =p$.
\item \label{cor:constant_CH:dim} $\dim X <p$.
\end{enumerate}
If $X(k)^G = \varnothing$, then every Chern number of $X$ is divisible by $p$.
\end{corollary}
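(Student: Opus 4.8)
The plan is to derive the corollary from Theorem~\ref{th:constant_CH} together with the fact that the tangent bundle is intrinsically $G$-equivariant. Indeed, a $G$-action being a morphism $G \to \Aut_k(X)$, for each $g \in G$ one has a canonical isomorphism $g^*\Tan_X \cong \Tan_X$, and these satisfy the cocycle condition, so $\Tan_X$ is a $G$-equivariant vector bundle. Hence its Chern classes lift to equivariant Chern classes in $\CH_G(X)$; since $X$ is smooth, $\CH_G(X)$ is a ring and the forgetful map $\CH_G(X) \to \CH(X)$ is a ring homomorphism, so any product of Chern classes of $\Tan_X$ lies in the image of $\CH_G(X) \to \CH(X)$. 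A Chern number of $X$ is exactly the degree of such a product when its total codimension equals $\dim X$, i.e.\ the degree of a zero-cycle class contained in that image.

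Granting this, cases~\ref{cor:constant_CH:ab} and~\ref{cor:constant_CH:p} are immediate from the corresponding cases of Theorem~\ref{th:constant_CH}: if $X(k)^G = \varnothing$, every element of $\im(\CH_G(X) \to \CH(X))$ has degree divisible by $p$, hence so does every Chern number of $X$. By the same token, case~\ref{cor:constant_CH:dim} follows from Theorem~\ref{th:constant_CH}\ref{th:constant_CH:dim} whenever $\dim X \le p-2$.

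The one case that remains, and the main obstacle, is~\ref{cor:constant_CH:dim} with $\dim X = p-1$, which lies just beyond the range of Theorem~\ref{th:constant_CH}\ref{th:constant_CH:dim}. Here the plan is to induct on $|G|$. We may assume $G \ne 1$ (if $G = 1$ then $X(k)^G = X(k)$ is nonempty unless $X = \varnothing$, a trivial case), and by case~\ref{cor:constant_CH:p}, already proved, we may assume $\carac k \ne p$. Choose a central subgroup $H \cong \mup$ of $G$. If $H$ acts trivially on $X$, then $X$ is a $G/H$-variety with $X(k)^{G/H} = \varnothing$ and $|G/H| < |G|$, so we conclude by induction. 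If $H$ acts freely, then $\pi\colon X \to Y := X/H$ is finite étale of degree $p$ onto a smooth projective variety with $\pi^*\Tan_Y \cong \Tan_X$; writing a product $c$ of Chern classes of $\Tan_X$ as $\pi^* c'$ with $c'$ the corresponding product for $Y$, the projection formula gives $\pi_*(c \cap [X]) = p\,(c' \cap [Y])$, so every Chern number of $X$ is $p$ times a Chern number of $Y$. The remaining possibility is that $X^H$ is a nonempty proper closed subvariety of $X$, which is smooth because $\carac k \ne p$, is $G$-invariant since $H$ is central, and on which $G/H$ acts without fixed points. For this last situation I would invoke a concentration principle: the Chern numbers of $X$ should be determined modulo $p$ by $H$-equivariant characteristic data supported on $X^H$, and since the normal bundle $N_{X^H/X}$ has no trivial $H$-summand, its equivariant Euler class is a non-zero-divisor that becomes invertible after localisation, with the sum of its inverse over the characters of $\mup$ supplying the factor $p$. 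Making this localisation argument precise is the delicate point; it is presumably the role of the auxiliary small-dimension results already used above, carried out in an intermediate equivariant theory of the kind discussed in the introduction.
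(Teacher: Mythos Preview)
Your treatment of cases~\eqref{cor:constant_CH:ab}, \eqref{cor:constant_CH:p}, and of~\eqref{cor:constant_CH:dim} when $\dim X\le p-2$ is correct and is exactly what the paper does: the tangent bundle is $G$-equivariant, its Chern classes lift to $\CH_G(X)$, and Theorem~\ref{th:constant_CH} applies.

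The genuine gap is the case $\dim X=p-1$ of~\eqref{cor:constant_CH:dim}. Your inductive scheme (central $H\simeq\mup$, trichotomy on the $H$-action) has the right shape, and your trivial and free cases are fine. But in the mixed case you only gesture at a concentration principle; you do not produce a formula expressing the Chern numbers of $X$ modulo $p$ in terms of data on $X^H$, and the heuristic about inverting the equivariant Euler class and ``summing its inverse over the characters of $\mup$'' is not substantiated. This is precisely the point where a naive localisation argument runs into trouble, and you acknowledge as much.

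The paper's argument for this case (see \dref{th:smalldim}{th:smalldim:Chern}) is more elementary and does not invert any Euler class. After stripping off the connected components on which $H$ acts trivially (handled by induction on $G/H$), every remaining component meets $X-X^H$, so $\dim X^H<p-1$. It now suffices to show that $\deg\alpha\equiv 0\pmod p$ for every $\alpha\in\CH_G(X)$. Push $\alpha$ forward along the quotient $\varphi\colon X\to X/H$. On the free locus $U=X-X^H$ the pull-back $\phi^*\colon\CH_G(U/H)\to\CH_G(U)$ is surjective (\rref{lemm:free_equpb}) and $\phi_*\phi^*$ is multiplication by~$p$, so $\varphi_*(\alpha)|_{U/H}\in p\cdot\CH_G(U/H)$. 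Since $X^H\to X/H$ is a closed immersion with open complement $U/H$, the localisation sequence for $\CH_G$ yields $\varphi_*(\alpha)\equiv i_*(\gamma)\pmod p$ for some $\gamma\in\CH_G(X^H)$. Now $\dim X^H<p-1$, so \dref{th:smalldim}{th:smalldim:CH} gives $\deg\gamma\equiv 0\pmod p$, and hence $\deg\alpha=\deg\varphi_*(\alpha)\equiv 0\pmod p$. The whole thing is a push--pull identity plus the localisation exact sequence, reducing to the already-established range $\dim<p-1$; your free-action sub-case is the special instance $X^H=\varnothing$ of this same computation.
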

\begin{proof}
By \cite[\S2.4]{EG-Equ} any $G$-equivariant vector bundle over $X$ (i.e.\ a vector bundle whose $\Oc_X$-module of sections is $G$-equivariant) admits Chern classes with values in $\CH_G(X)$ mapping under the forgetful morphism $\CH_G(X) \to \CH(X)$ to its usual Chern classes. Since the tangent bundle of $X$ is $G$-equivariant (see \rref{rem:Tan_is_equ}), the result follows from \rref{th:constant_CH} in cases \eqref{cor:constant_CH:ab} and \eqref{cor:constant_CH:p} (or when $\dim X <p-1$). In case \eqref{cor:constant_CH:dim}, the result may also be deduced from \rref{th:constant_CH}, albeit less directly, see \dref{th:smalldim}{th:smalldim:Chern}.
\end{proof}

Counterexamples show that the conditions of the theorem, and of the corollary in case $p \neq 2$, are sharp, see \rref{ex:nonabelian} below. The dimensional bound \rref{cor:constant_CH:dim} in the corollary may however be improved when $p =2$, see \rref{rem:Euler_top:2}.

\begin{remark}
\label{rem:Euler_top} The following statement may be found in \cite[\S7.2]{Serre-finite_fields}: \emph{Assume that $\carac k \neq p$, and let $X$ be a variety with an action of $G$. If the topological Euler characteristic $\chi(X)$ is prime to $p$, then $X(k)^G \neq \varnothing$.}

The assumption on the characteristic of $k$ is necessary: the group $\Zz/p$ acts without fixed points on $\Aa^1$ by $x \mapsto x +1$ in characteristic $p$, and $\chi(\Aa^1)=1$. Note that when $X$ is smooth projective of pure dimension $d$, the integer $\chi(X) = \deg c_d(\Tan_X)$ is a Chern number of $X$. It thus follows from \dref{cor:constant_CH}{cor:constant_CH:p} that the above statement is in fact valid in characteristic $p$ when $X$ is smooth and projective.
\end{remark}

\begin{remark}
\label{rem:Euler_top:2}
Let us now comment on the situation in \rref{cor:constant_CH} when $p=2$ and $X$ has small pure dimension. We denote the Chern number $\deg c_{i_1}(\Tan_X) \cdots c_{i_n}(\Tan_X)$ by $c_{i_1} \cdots c_{i_n}$.

--- $\dim X=1$: the only Chern number $c_1 = \chi(X) = 2 \chi(X,\Oc_X)$ is even. Thus \rref{cor:constant_CH} says nothing for curves when $p=2$.

--- $\dim X =2$: the two Chern numbers $c_2 = \chi(X)$ and $c_1^2$ have the same parity by Noether's formula $\chi(X) + c_1^2 =12 \chi(X,\Oc_X)$ (see e.g.\ \cite[Example~15.2]{Ful-In-98}). Thus \rref{cor:constant_CH} reduces to the statement in \rref{rem:Euler_top} for surfaces when $p=2$.

--- $\dim X =3$: we have by the Hirzebruch-Riemann-Roch theorem (see e.g.\ \cite[Example~15.2.5 (a)]{Ful-In-98})
\[
c_1 c_2 = 24 \chi(X,\Oc_X) \quad \text{ and } \quad c_1 c_2 + 4 c_1^3 = 8 \chi(X,\omega_X^\vee),
\]
so that the two Chern numbers $c_1 c_2$ and $c_1^3$ are even. The other Chern number $c_3 = \chi(X)$ is also even (for instance by Poincar\'e duality for $l$-adic cohomology). Thus \rref{cor:constant_CH} says nothing for threefolds when $p=2$.

Summarising, we see that the condition \rref{cor:constant_CH:dim} of \rref{cor:constant_CH} may be weakened to the condition ``$\dim X <4$'' when $p=2$. This new dimensional bound is sharp, see \rref{ex:nonabelian}.
\end{remark}

\subsection{Equivariant modules and the cobordism ring}

\begin{theorem}
\label{th:constant_K}
Let $X$ be a projective variety with an action of $G$. Assume that one of the following conditions holds:
\begin{enumerate}[label=(\roman*),ref=\roman*]
\item \label{th:constant_K:cyclic} $G$ is cyclic.
\item \label{th:constant_K:carac} $\carac k =p$.
\item \label{th:constant_K:small} $\dim X <p-1$.
\end{enumerate}
Then $X(k)^G = \varnothing$ if and only if the Euler characteristic $\chi(X, \Fc)$ of every $G$-equivariant coherent $\Oc_X$-module $\Fc$ is divisible by $p$.
\end{theorem}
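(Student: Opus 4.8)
\emph{Plan.} One direction is immediate: if $x \in X(k)^G$, then the residue field $k(x)$, seen as a skyscraper $\Oc_X$-module, is $G$-equivariant (since $G$ fixes $x$) and $\chi(X, k(x)) = \dim_k k(x) = 1$, which is prime to $p$. The substance is the converse: assuming $X(k)^G = \varnothing$, show that $p \mid \chi(X, \Fc)$ for every $G$-equivariant coherent $\Oc_X$-module $\Fc$. As with \rref{th:constant_CH}, I would treat the three hypotheses by separate arguments — and, unlike there, the hypothesis ``abelian'' must be strengthened to ``cyclic'', reflecting a genuine phenomenon (compare \rref{ex:SB}).

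\emph{The cyclic case} (for $\carac k \neq p$; a cyclic group in characteristic $p$ falls under the next case). I would induct on $|G| = p^n$. If $n = 1$, then $X(k)^G = \varnothing$ forces the action of $G \cong \Zz/p$ to be free, so $\pi \colon X \to Y := X/G$ is a finite étale $G$-torsor of degree $p$. Descent along $\pi$ gives $\Fc \cong \pi^*\bar\Fc$ for a coherent $\Oc_Y$-module $\bar\Fc$, and since $\pi$ is affine, $\chi(X, \Fc) = \chi(Y, \bar\Fc \otimes_{\Oc_Y} \pi_*\Oc_X)$. In characteristic $\neq p$ the $G$-action decomposes $\pi_*\Oc_X$ into isotypic line bundles, $\pi_*\Oc_X \cong \bigoplus_{i=0}^{p-1} L^{\otimes i}$ with $L^{\otimes p} \cong \Oc_Y$, so $\chi(X, \Fc) = \sum_{i=0}^{p-1}\chi(Y, \bar\Fc \otimes L^{\otimes i})$. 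By Snapper's lemma $i \mapsto \chi(Y, \bar\Fc\otimes L^{\otimes i})$ is polynomial in $i$; it is $p$-periodic because $L^{\otimes p} \cong \Oc_Y$, hence constant, so the sum is $p\cdot\chi(Y, \bar\Fc)$. For $n \geq 2$, let $N \subset G$ be the unique (central) subgroup of order $p$ and $\rho\colon X \to X' := X/N$ the finite quotient, with $G/N \cong \Zz/p^{n-1}$ acting on $X'$. One checks $X'(k)^{G/N} = \varnothing$: the fibre of $\rho$ over a $G/N$-fixed $k$-point is a $G$-stable $N$-orbit; if a singleton it is a $G$-fixed point of $X$, and if of cardinality $p$ then the $N$-action on it is free while the $G$-stabiliser of any of its points has order $p^{n-1}$, hence equals the unique index-$p$ subgroup of $G$, which contains $N$ — a contradiction. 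Now $\chi(X, \Fc) = \chi(X', \rho_*\Fc)$, and since $N$ is central and $\carac k \neq p$, the $G$-action on $\rho_*\Fc$ (lying over $X'$) splits it into $N$-isotypic pieces $M_\chi$, $\chi \in \Char N$; twisting $M_\chi$ by a character of $G$ extending $\chi^{-1}$ makes it $G/N$-equivariant without altering its Euler characteristic, so the induction hypothesis applied to $(X', G/N)$ gives $p \mid \chi(X', M_\chi)$ for all $\chi$, whence $p \mid \chi(X, \Fc)$.

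\emph{Small dimension and characteristic $p$.} When $\dim X < p-1$ (for an arbitrary $p$-group $G$), I would use the Riemann--Roch theorem of Fulton and its $G$-equivariant refinement: the denominators appearing in the Todd and Chern-character classes in degrees $\leq \dim X$ involve only primes $\leq \dim X + 1 < p$, so $\chi(X, \Fc)$ is, up to a $p$-adic unit, the degree of a $0$-cycle class in the image of $\CH_G(X) \to \CH(X)$, which is divisible by $p$ by \dref{th:constant_CH}{th:constant_CH:dim}. When $\carac k = p$, I would rerun the inductive scheme above with two modifications. In the base case ($N \cong \Zz/p$ acting freely), the isotypic splitting of $\pi_*\Oc_X$ is unavailable; instead, since $(\sigma - 1)^p = 0$ in characteristic $p$ for $\sigma$ a generator of $N$, the filtration of the rank-$p$ bundle $\pi_*\Oc_X$ by the subbundles $(\sigma-1)^i(\pi_*\Oc_X)$ has length exactly $p$, its graded pieces are line bundles successively isomorphic via $\sigma-1$, and the bottom one is identified with $\Oc_Y$ through the trace; thus $[\pi_*\Oc_X] = p\cdot[\Oc_Y]$ in $K_0(Y)$ and again $\chi(X, \Fc) = p\cdot\chi(Y, \bar\Fc)$. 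The inductive step, however, breaks down for non-cyclic $G$: the quotient $X/N$ by a central $N \cong \Zz/p$ may acquire $G/N$-fixed points although $X$ has no $G$-fixed point, so one cannot reduce to smaller groups, and here I would instead invoke the theory $K_G$ of \S\ref{sect:KG}.

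\emph{Main obstacle.} That last point — the characteristic-$p$ case for non-cyclic $G$ — is where I expect the real difficulty. The Chow-theoretic \rref{th:constant_CH} holds uniformly for trigonalisable groups, hence for all $p$-groups in characteristic $p$; its $K$-theoretic counterpart resists the same treatment precisely because quotienting by central order-$p$ subgroups does not preserve the absence of fixed points. The point of $K_G$ — Borel's construction with $EG$ replaced by its generic point — is to supply a concentration principle suited to this situation, analogous to the cited \rref{th:eq_deg} for Chow groups. Finally, the absence of a uniform argument covering the cyclic and small-dimension cases — one hinging on $p$-periodicity of Hilbert polynomials, the other on $p$-adic integrality in Riemann--Roch — is, as the introduction remarks, a genuine feature of the problem.
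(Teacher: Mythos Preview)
Your cyclic argument is correct but organized differently from the paper's \rref{th:cyclic}: you quotient by the order-$p$ subgroup and induct downward, while the paper quotients by the index-$p$ subgroup $H\simeq\mu_{p^{r-1}}$ and reduces in one step to $\mu_p$; both hinge on the same fact about cyclic $p$-groups (any two nontrivial subgroups meet nontrivially, as in \rref{prop:fix_surj}). Your base case is more elaborate than needed: once the action is free, $\chi(X,\pi^*\bar\Fc)=p\cdot\chi(Y,\bar\Fc)$ follows directly from finite flat pushforward (\rref{prop:free}) without the Snapper/periodicity detour.

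For \emph{small dimension} the paper does not reduce to Chow theory; this is precisely where the $K_G$ machinery of \S\ref{sect:KG} is deployed (via \rref{prop:delta_fixed}, yielding \dref{th:smalldim}{th:smalldim:K}), and the Chow statement \dref{th:smalldim}{th:smalldim:CH} is then \emph{deduced} from the $K$-theory one. Your proposed reduction is therefore circular as the paper stands: \dref{th:constant_CH}{th:constant_CH:dim} is proved via \dref{th:smalldim}{th:smalldim:CH}, which rests on the very statement you want. Even granting an independent Chow argument, you would still need an equivariant version of the singular Riemann--Roch denominator bound \rref{prop:denom_RR} (for possibly singular $X$), which is not supplied here.

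For \emph{characteristic $p$} the paper does \emph{not} invoke $K_G$; indeed \rref{prop:delta_fixed} requires a central $\mu_p$, which a constant $p$-group in characteristic $p$ lacks. The idea you are missing is the proof of \rref{th:K:carac}: with $C$ a central order-$p$ subgroup and $\varphi\colon X\to X/C$, one shows that $\varphi_*[\Fc]$ modulo $p$ is supported on the closed image $Y$ of $X^C$ in $X/C$ (outside $Y$ the $C$-action is free and \rref{lemm:quotient_unipotent} gives $\psi_*\psi^*=p$), then passes to $G/C$-equivariant classes on $Y$ via \rref{lemm:trivial_unipotent}. The induction closes not because $X/C$ has no $G/C$-fixed points---it may well have some---but because $Y$ has none: the map $X^C\to Y$ is surjective and radicial (\rref{lemm:quotient_radicial}), so by \rref{lemm:smooth_radicial} every $k$-point of $Y^{G/C}$ lifts to $(X^C)^{G/C}=X^G$, which is empty.
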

\begin{proof}
If $\Ic$ is the ideal in $\Oc_X$ of a fixed closed point, then the $\Oc_X$-module $\Oc_X/\Ic$ is $G$-equivariant and satisfies $\chi(X, \Oc_X/\Ic)=1$. The converse is proved respectively in \rref{th:cyclic}, \rref{th:K:carac}, and \dref{th:smalldim}{th:smalldim:K}.
\end{proof}

The sharpness of the conditions of the theorem is illustrated by \rref{ex:noncyclic} below.\\

Let us declare two smooth projective varieties equivalent if their collections of Chern numbers (indexed by the monomials in the Chern classes) coincide. The disjoint union operation endows the set of equivalence classes with the structure of a cancellative abelian monoid. Its group completion\footnote{Contrary to what is stated in \cite[just before Theorem 5.1]{ELW} and in \cite[\S5]{Thom-Bourbaki}, it is necessary to add an inverse to the classes of zero-dimensional varieties in order to obtain a group.} has a ring structure, where the product is induced by the cartesian product of varieties. This ring does not depend on the field $k$, in fact Merkurjev proved that it coincides with the Lazard ring $\Laz$, the coefficient ring of the universal commutative one-dimensional formal group law \cite[Theorem 8.2]{Mer-Ori}. Thus to each smooth projective variety $X$ corresponds a class $[X] \in \Laz$. When the characteristic of $k$ is zero, the ring $\Laz$ may be identified with the coefficient ring $\Omega(\Spec k)$ of the algebraic cobordism of Levine-Morel, and $[X]$ is the cobordism class of the morphism $X \to \Spec k$ \cite[Remark~4.3.4, Theorem~4.3.7]{LM-Al-07}.

\begin{corollary}
\label{cor:constant_K}
In the conditions of \rref{th:constant_K}, assume additionally that $X$ is smooth. If $X(k)^G = \varnothing$, then $[X] \in p\Laz$.
\end{corollary}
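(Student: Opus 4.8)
The plan is to combine Theorem~\ref{th:constant_K} with the Hattori--Stong theorem, which characterises divisibility in the Lazard ring in terms of $K$-theoretic characteristic numbers.

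First, the tangent bundle $\Tan_X$ is $G$-equivariant by Remark~\ref{rem:Tan_is_equ}. Applying to it any Schur functor $P$ --- or, more generally, any element $P$ of the representation ring of $\GL$, interpreted additively --- yields, by functoriality, a $G$-equivariant vector bundle $P(\Tan_X)$ on $X$ (for a virtual $P$ one takes a formal difference of such bundles). Hence, if $X(k)^G = \varnothing$, Theorem~\ref{th:constant_K} applies to each $\Fc = P(\Tan_X)$ and shows that $\chi(X, P(\Tan_X))$ is divisible by $p$; by additivity of the Euler characteristic the same holds for virtual $P$.

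Next, by the Hirzebruch--Riemann--Roch theorem (i.e.\ Grothendieck--Riemann--Roch for the structure morphism $X \to \Spec k$, valid over an arbitrary field), one has $\chi(X, P(\Tan_X)) = \deg(\mathrm{ch}(P(\Tan_X)) \cdot \mathrm{td}(\Tan_X))$, which is a fixed $\Qq$-linear combination --- depending only on $P$ and $\dim X$ --- of Chern numbers of $X$. In particular this integer depends only on the class $[X] \in \Laz$, so each $P$ determines a homomorphism $\Laz \to \Zz$; as $P$ runs over a $\Zz$-basis of the representation ring of $\GL$, these are exactly the $K$-theory characteristic numbers of $X$ under Merkurjev's identification $\Laz \cong MU_{2\ast}$. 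The Hattori--Stong theorem asserts that a class in $\Laz$ lies in $p\Laz$ if and only if all of its $K$-theory characteristic numbers are divisible by $p$. Combining this with the previous paragraph yields $[X] \in p\Laz$, as desired.

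The step requiring the most care is the invocation of Hattori--Stong: one must use the sharp form of that theorem, namely that the twisted Todd genera $[X] \mapsto \chi(X, P(\Tan_X))$ jointly \emph{detect} $p$-divisibility in $\Laz$ (equivalently, that the cokernel of the total characteristic-number map has no $p$-torsion), and not merely its injectivity; and one must check that every $K$-theory characteristic number indeed arises from a $G$-equivariant coherent module, so that Theorem~\ref{th:constant_K} applies to all of them. Granting this, together with the equivariance of $P(\Tan_X)$ and the validity of Riemann--Roch in characteristic $p$, the argument is formal.
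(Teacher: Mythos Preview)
Your proof is correct and follows essentially the same route as the paper: the paper notes that $\Tan_X$ is $G$-equivariant (\rref{rem:Tan_is_equ}), invokes Theorem~\ref{th:constant_K}, and then appeals to the Hattori--Stong theorem (citing \cite[Lemma~5.4, Proposition~5.5]{ELW} for the precise form needed). You have simply unpacked that appeal by spelling out that the relevant characteristic numbers are the $\chi(X,P(\Tan_X))$ and that these are all covered by Theorem~\ref{th:constant_K}; the paper additionally remarks that in case~\eqref{th:constant_K:small} one can alternatively deduce the result from Theorem~\ref{th:constant_CH} via \cite[II, Theorem~7.8, Lemma~7.9~(iii)]{Adams-Stable}.
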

\begin{proof}
The sheaf of sections of the tangent bundle $\Tan_X$ is a $G$-equivariant coherent $\Oc_X$-module (see \rref{rem:Tan_is_equ}). The corollary thus follows from \rref{th:constant_K}, in view of the Hattori-Stong theorem, see \cite[Lemma~5.4, Proposition~5.5]{ELW}. In case \eqref{th:constant_K:small}, this is also a consequence of \rref{th:constant_CH}: indeed one may see using \cite[II, Theorem~7.8, Lemma 7.9~(iii)]{Adams-Stable} that the class of a smooth projective variety of dimension $<p-1$ is divisible by $p$ in $\Laz$ if and only if each of its Chern numbers is divisible by $p$.
\end{proof}

\begin{remark}[\rref{cor:constant_K} has a converse] If the group $G$ is nontrivial, then $p\Laz$ consists of classes of smooth projective varieties $X$ admitting a $G$-action such that $X(k)^G=\varnothing$. Indeed, we may find a subgroup $H\subset G$ of index $p$. Then $G$ acts without fixed points on the set $G/H$, whose cardinality is $p$. Thus, given any smooth projective variety $Y$, we may make $G$ act without fixed point on $X=Y^{\sqcup p}$ by permuting the copies of $Y$.
\end{remark}

\begin{remark}
Fixed points were known to exist under the assumption that $X$ is connected and $H^i(X,\Oc_X) = 0$ for $i>0$: in characteristic $p$ this follows from Smith theory and the Artin-Schreier sequence \cite[\S7.4, Remark]{Serre-finite_fields}; the cyclic case in characteristic $\neq p$ follows from the Lefschetz fixed point formula \cite[VI \S9]{FL-Ri-85}. Let us also mention a fixed point theorem of Koll\'ar for smooth projective separably rationally connected varieties \cite[Corollary~3.36]{Starr-arithmetic}.

Since the $\Oc_X$-module $\Oc_X$ admits a natural $G$-equivariant structure, \rref{th:constant_K} implies that a projective variety $X$ has a fixed point under the weaker assumption that the arithmetic genus $\chi(X,\Oc_X)$ is prime to $p$. By contrast to the above-mentioned results, this applies to singular varieties, and also to some smooth projective varieties which are not ``rational-like'', such as curves of genus two. 
\end{remark}

\subsection{Browder's theorem}
\label{sect:Browder}
The next statement is an analog of a theorem of Browder in topology \cite{Browder-PB}. 
\begin{theorem}
\label{th:Browder_surj}
Let $f\colon Y \to X$ be a projective surjective $G$-equivariant morphism, where $X$ is smooth and connected, and $Y$ is integral. Assume that $[k(Y):k(X)]$ is finite and prime to $p$, and that one of the following conditions holds:
\begin{enumerate}[label=(\roman*),ref=\roman*]
\item \label{th:Browder_ab} $G$ is abelian.
\item $\carac k =p$.
\item $\dim X < p$.
\end{enumerate}

Then the induced morphism $f^G \colon Y^G \to X^G$ is surjective.
\end{theorem}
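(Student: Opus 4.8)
The plan is to establish the statement one fibre at a time, reducing it to \rref{th:constant_CH}. Fix a point $x\in X(k)^G$; since $k$ is algebraically closed it suffices to produce a point of $Y(k)^G$ lying over $x$, that is, to prove that the scheme-theoretic fibre $Z:=f^{-1}(x)=Y\times_X\{x\}$ has a $G$-fixed $k$-point. It is a projective $k$-scheme carrying a $G$-action, because $x$ is $G$-fixed and $f$ is $G$-equivariant (and when invoking \rref{th:constant_CH} below one may replace it by its reduction, which has the same $k$-points and equivariant Chow groups). I would record at once the dimension bound $\dim Z\le\dim X-1$: if $\dim X\ge1$, surjectivity of $f$ gives $f^{-1}(x)\ne Y$, whence $\dim Z\le\dim Y-1=\dim X-1$ since $Y$ is integral of dimension $\dim X$; and the case $\dim X=0$ is trivial, for then $X=\Spec k$ forces $k(Y)=k$ and $Y=\Spec k$. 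In particular, in the third case of the theorem, $\dim Z<p-1$.

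The core of the argument is to exhibit on $Z$ an equivariant zero-cycle of degree prime to $p$. Consider the Cartesian square whose horizontal arrows are the closed immersions $Z\hookrightarrow Y$ and $j\colon\{x\}\hookrightarrow X$, and whose vertical arrows are $f$ and $\pi\colon Z\to\{x\}$. Because $X$ is smooth and $x$ is a rational point, $j$ is a regular closed immersion of codimension $\dim X$, and it is $G$-equivariant since $x$ is $G$-fixed. I would then form $\xi:=j^{!}[Y]\in\CH_G(Z)$, where $j^{!}\colon\CH_G(Y)\to\CH_G(Z)$ is the equivariant refined Gysin pullback attached to $j$ and the square — available in the equivariant intersection theory of \cite{EG-Equ}, which extends \cite[Ch.~6]{Ful-In-98} — and $[Y]$ is the fundamental class of the integral $G$-variety $Y$. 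Let $\bar\xi\in\CH(Z)$ denote the image of $\xi$ under the forgetful morphism; it is the class of a zero-cycle. To compute its degree, use that $j^{!}$ commutes with proper pushforward, which yields $\pi_*\bar\xi=j^{!}(f_*[Y])$ in $\CH_0(\{x\})=\Zz$. Here $f_*[Y]=[k(Y):k(X)]\cdot[X]$ because $f$ is proper and generically finite of degree $[k(Y):k(X)]$, and $j^{!}[X]=1$ because $x$ is a regular point of $X$; thus $\deg\bar\xi=[k(Y):k(X)]$, which is prime to $p$ by hypothesis.

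It then remains to apply \rref{th:constant_CH} to the projective $G$-variety $Z$ (or its reduction). Its hypotheses are met in all three cases of the present theorem — respectively $G$ abelian, $\carac k=p$, and, by the bound above, $\dim Z<p-1$ — so, since $\bar\xi$ lies in $\im(\CH_G(Z)\to\CH(Z))$ and has degree prime to $p$, we obtain $Z(k)^G\ne\varnothing$. As $x\in X(k)^G$ was arbitrary, this shows that $f^G\colon Y^G\to X^G$ is surjective on $k$-points, hence surjective. Apart from \rref{th:constant_CH}, the only substantial ingredient is the equivariant refined Gysin formalism; this is routine given \cite{EG-Equ}, but it is exactly where the two delicate points sit: the $G$-equivariance of $j$ is what places $\bar\xi$ in the image of $\CH_G(Z)$, and the compatibility of $j^{!}$ with proper pushforward is what identifies its degree. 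The one thing not to overlook is the fibre-dimension bound, without which the case $\dim X<p$ would not go through.
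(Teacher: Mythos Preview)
Your proof is correct and follows essentially the same approach as the paper: both construct an equivariant class of degree prime to $p$ on the fibre over a fixed point via a refined Gysin/intersection construction (the paper uses the product $\alpha'\cdot_f\gamma'$ of \S\ref{sect:equ_refined}, which amounts to your $j^!$), then invoke the ``degree prime to $p$ forces a fixed point'' result. The paper actually proves the stronger \rref{th:Browder} (surjectivity of $\CH(Y^G)/p\to\CH(X^G)/p$ over an arbitrary base field, via noetherian induction over closed subschemes of $X^G$) and deduces \rref{th:Browder_surj} from it; your argument is the natural simplification of that proof once one uses the standing assumption of \S\ref{sect:results} that $k$ is algebraically closed, so that checking $k$-points of $X^G$ suffices.
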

\begin{proof}
See \rref{th:Browder} (and \dref{rem:Browder}{rem:Browder:surj} and \S\ref{ex:trig}). Observe that the last condition implies that the fibers of $f$ all have dimension $<p-1$, since $f$ is a dominant morphism between integral varieties of the same dimension.
\end{proof}

When $\carac k \neq p$, we prove in fact that the $\Zz_{(p)}$-module $H(X^G)$ is a direct summand of $H(Y^G)$, for any $\Zz_{(p)}$-linear cohomology theory $H$ satisfying the projection formula, for instance $H=\CH(-)\otimes \Zz_{(p)}$ (see \dref{rem:Browder}{rem:Browder:smooth}).

\begin{remark}
A different method is used by Koll\'ar and Szab\'o in \cite{Reichstein-Youssin} to treat questions of similar nature. In particular, the fact that $X(k)^G \neq \varnothing$ implies that $Y(k)^G \neq \varnothing$ under the condition \dref{th:Browder_surj}{th:Browder_ab} was already stated and proved in \cite[Proposition~A4]{Reichstein-Youssin}.
%
\end{remark}

\section{Group actions on varieties}
\label{sect:prelim}
A variety will be a separated scheme of finite type over $k$ with the property that every finite family of points is contained in some affine open subscheme. In particular quasi-projective schemes over $k$ will be considered varieties, and quotients of varieties by finite algebraic groups will remain varieties (see \rref{sect:quotient}). Products of varieties will be understood as cartesian products over $k$. Morphisms of varieties will be $k$-morphisms; unless otherwise stated a morphism will mean a morphism of varieties. If $X$ is a variety and $R$ a $k$-algebra, the set of $k$-morphisms $\Spec R \to X$ will be denoted by $X(R)$. The residue field at a point $x \in X$ will be denoted by $k(x)$.

\subsection{Algebraic groups}
We refer e.g.\ to \cite[Chapter 0, \S1]{GIT} for the basic definitions concerning algebraic groups and their actions. An algebraic group will mean an affine group scheme of finite type over $k$. When $Y$ and $X$ are two varieties with a $G$-action, we always will endow the product $X \times Y$ with the diagonal $G$-action.

\subsubsection{The order of a group} The order of an algebraic group $G$ is the dimension of the $k$-vector space $H^0(G,\Oc_G)$. An algebraic group is called a \emph{$p$-group} if its order is (finite and) a power of the prime $p$. When $G$ is finite, the order of a subgroup $H$ of $G$ divides the order of $G$; this follows from the fact that the quotient morphism $G \to G/H$ is flat and finite of degree equal to the order of $H$ (see \rref{prop:G_torsor}).

\subsubsection{Constant finite groups}
Let $\Gamma$ be an ordinary finite group. Then there exists an algebraic group $\Gamma_k$ such that $\Gamma_k(R)=\Gamma$ for any connected commutative $k$-algebra $R$. The order of $\Gamma_k$ coincides with the order of $\Gamma$. If $G$ is an algebraic group and $\Gamma$ is a subgroup of $G(k)$, then the constant group $\Gamma_k$ is a subgroup of $G$.

\subsubsection{Diagonalisable groups}(see \cite[I, \S4.4]{SGA3-1}) \label{sect:diag}
A diagonalisable group $G$ is entirely determined by its character group $\Char{G}$, an ordinary abelian group. The order of $G$ coincides with the order of $\Char{G}$.
 
When $n$ is a positive integer, the functor assigning to a commutative $k$-algebra $R$ the multiplicative group consisting of those $x \in R$ such that $x^n =1$ is represented by the diagonalisable group $\mu_n$ satisfying $\Char{\mu_n} = \Zz/n$. When $k$ contains a root of unity order $n$, the algebraic group $\mu_n$ is isomorphic to the constant group $(\Zz/n)_k$.

There is an anti-equivalence between the category of affine varieties with a $G$-action and that of commutative $\Char{G}$-graded $k$-algebras of finite type \cite[I, 4.7.3.1]{SGA3-1}. 

\subsubsection{Unipotent groups} An algebraic group $G$ is called \emph{unipotent} if every nonzero $G$-representation over $k$ admits a nonzero trivial subrepresentation. Constant finite $p$-groups over a field of characteristic $p$ are unipotent. 

When $k$ has positive characteristic $p$, the functor assigning to a commutative $k$-algebra $R$ the additive group consisting of those $x \in R$ such that $x^p =0$ is represented by a unipotent group $\alpha_p$, whose order is $p$.

\subsubsection{Trigonalisable groups}
\label{ex:trig}
An algebraic group $G$ is called \emph{trigonalisable} if every nonzero $G$-representation over $k$ admits a subrepresentation of codimension one. It is equivalent to require that $G$ contain a normal unipotent subgroup $U$ such that $G/U$ is diagonalisable \cite[IV, \S2, Proposition 3.4]{Demazure-Gabriel-1}.

A finite constant group $G=\Gamma_k$ is trigonalisable in each of the following cases:
\begin{itemize}
\item[---] The ordinary group $\Gamma$ is abelian, and $k$ contains a root of unity of order equal to the exponent of $\Gamma$ (the group $G$ is diagonalisable, see \cite[Lemma~3.6.1]{isol}).

\item[---] The field $k$ has characteristic $p>0$, and $G$ is a $p$-group (it is then unipotent).
\end{itemize}

\subsection{Fixed locus and quotient}
\begin{noname}
{\emph {Invariant subschemes.}}
Let $X$ be a variety with an action of an algebraic group $G$. A closed (resp.\ An open) subscheme $S$ of $X$ is called \emph{$G$-invariant} if the composite morphism $G \times S \to G \times X \to X$ factors through $S \to X$. Note that the scheme theoretic image of a $G$-equivariant morphism is a $G$-invariant closed subscheme.
\end{noname}

\begin{noname}
{\emph {Fixed locus.}}
By \cite[Proposition A.8.10 (1)]{CGP} there is a $G$-invariant closed subscheme $X^G$ of $X$ with trivial $G$-action, such that for any variety $T$ with trivial $G$-action, the set of $G$-equivariant morphisms $T\to X$ coincides with the set of morphisms $T \to X^G$ (as subsets of $\Hom(T,X)$). When $G=\Gamma_k$ for an ordinary finite group $\Gamma$, a $G$-action is precisely a group morphism $\Gamma \to \Aut_k(X)$, and $X^G(k) = X(k)^\Gamma$.
\end{noname}

\begin{noname}
{\emph{Quotient.}}
\label{sect:quotient}
When it exists, we denote by $X \to X/G$ the categorical quotient morphism, defined as the coequaliser of the two morphisms $G \times X \to X$ (the action and the second projection). This morphism always exists in the category of varieties when $G$ is a finite algebraic group (see \cite[V, Th\'eor\`eme~4.1(i)]{SGA3-1}). A $G$-action on an affine scheme $X=\Spec A$ is given by a $k$-algebra morphism $\mu \colon A \to A \otimes_k R$, where $R=H^0(G,\Oc_G)$. The set
\begin{equation}
\label{eq:AG}
A^G = \{x \in A | \mu(x) = x \otimes 1 \in A \otimes_k R)\}
\end{equation}
is a $k$-subalgebra of $A$, and the morphism $X \to X/G$ is given by the inclusion $A^G \to A$.
\end{noname}

\begin{noname}
When $H$ is a finite normal subgroup of $G$, the variety $X/H$ inherits a $G/H$-action making the quotient morphism $X \to X/H$ a $G$-equivariant morphism. 
\end{noname}

\begin{lemma}
\label{lemm:quotient_radicial}
Let $G$ be a finite algebraic group acting on a variety $X$. Then the morphism $X^G \to X/G$ is radicial.
\end{lemma}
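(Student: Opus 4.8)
The plan is to prove that $X^G \to X/G$ is universally injective, which I would do by checking that the induced map on $K$-points is injective for every field $K$ (recall that a morphism is radicial precisely when this holds, equivalently when its diagonal is surjective). Since $X^G \to X$ is a closed immersion by construction --- hence a monomorphism --- and $X \to X/G$ is the coequaliser of the action and the second projection $G \times X \rightrightarrows X$, the crux of the matter is that two $G$-fixed points of $X$ lying in the same fibre of $\pi \colon X \to X/G$ must be equal.

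So I would start from $z_1, z_2 \in X^G(K)$ with the same image in $(X/G)(K)$, and let $x_1, x_2 \in X(K)$ be their images in $X$, so that $x_1$ and $x_2$ are $K$-points of a single fibre of $\pi$. The essential input is that, for a finite group scheme acting on a variety over a field, $\pi$ is a geometric quotient, so that its geometric fibres are precisely the $G$-orbits (this is part of the theory developed in \cite[V]{SGA3-1}; see also \cite{GIT}). Consequently, after base change to an algebraic closure $\overline K$ there is $g \in G(\overline K)$ with $(x_1)_{\overline K} = g \cdot (x_2)_{\overline K}$ in $X(\overline K)$. But $x_2$ factors through $X^G$, on which $G$ acts trivially, so $g \cdot (x_2)_{\overline K} = (x_2)_{\overline K}$, whence $(x_1)_{\overline K} = (x_2)_{\overline K}$. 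Since $X$ is separated, $X(K) \to X(\overline K)$ is injective, and therefore $x_1 = x_2$; as $X^G \to X$ is a monomorphism, this gives $z_1 = z_2$, and $X^G \to X/G$ is radicial.

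The main obstacle I anticipate is pinning down the precise geometric-quotient statement and making sure the reference covers \emph{every} finite group scheme $G$, in particular infinitesimal ones such as $\alpha_p$ or $\mu_p$ in characteristic $p$: there $G(\overline K)$ may be trivial and ``orbit'' must be interpreted scheme-theoretically, and the quotient $\pi$ turns out to be a universal homeomorphism, so that the statement holds but for a reason different from the \'etale case, and one should check the cited result is not tacitly assuming $G$ \'etale or reduced. Should a self-contained argument be preferred, it suffices to show instead that the diagonal $X^G \to X^G \times_{X/G} X^G$ is surjective, using only that $X^G \hookrightarrow X$ is a closed immersion, that $G$ acts trivially on $X^G$, and that $G \times X \to X \times_{X/G} X$ is surjective: a point of $X^G \times_{X/G} X^G$ is the image of some point of $G \times X$, whose second coordinate therefore factors through $X^G$ and so is $G$-fixed, forcing the two coordinates to coincide.
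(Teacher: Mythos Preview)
Your argument is correct and takes a genuinely different route from the paper's. The paper gives a self-contained computational proof: working affinely with $X=\Spec A$ (and reducing to $k$ infinite), it constructs a norm map $N\colon A\to A^G$ as the determinant of multiplication by $\mu(a)$ on the free $A$-module $A\otimes_k H^0(G,\Oc_G)$, verifies via Hopf-algebra identities that $N$ lands in $A^G$, shows that a fixed point $f\colon A\to K$ satisfies $f(N(a))=f(a)^n$ with $n$ the order of $G$, deduces $f_1(a)^n=f_2(a)^n$ for two fixed points agreeing on $A^G$, and then uses the substitution $a\mapsto a+\lambda$ with $\lambda$ ranging over the infinite field $k$ to force $f_1=f_2$ by comparing coefficients. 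Your approach instead packages everything into the surjectivity of $G\times X\to X\times_{X/G}X$, which does hold for all finite group schemes (\'etale case: the classical transitivity of $G$ on primes of $A$ over a prime of $A^G$; infinitesimal case: $X\to X/G$ is itself radicial since $a^{p^n}\in A^G$ for large $n$; general case via the connected--\'etale sequence), though as you anticipate the reference needs care, and your ``self-contained'' variant at the end still presupposes exactly this surjectivity. Your route is shorter and more conceptual; the paper's is longer but entirely elementary and avoids invoking the orbit property of the quotient.
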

\begin{proof}
We may assume that $X=\Spec A$ and that $k$ is infinite. Let $R$ be the coordinate ring of $G$. Denote by $m\colon R \to R \otimes R$ the co-multiplication morphism (tensor products are taken over $k$) and by $\mu \colon A \to A \otimes R$ be the co-action morphism. Then we have
\begin{equation}
\label{eq:action}
(\id_A \otimes m)\circ \mu = (\mu \otimes \id_R) \circ \mu
\end{equation}
When $C$ is a $k$-algebra, we denote by $N_C \colon C\otimes R \to C$ the norm, which maps $x\in C \otimes R$ to the determinant of the $C$-linear morphism $\alpha_x \colon C \otimes R \to C \otimes R$ given by multiplication with $x$. If $h \colon C \to C'$ is a $k$-algebra morphism, then
\begin{equation}
\label{eq:ext}
h \circ N_C = N_{C'} \circ (h \otimes \id_R).
\end{equation}

The morphism $G \times G\to G \times G$ given by $(g,h) \mapsto (g,g^{-1}h)$ is an automorphism and is compatible with the first projection. Therefore the corresponding ring endomorphism $\varphi$ of $R \otimes R$ is an $R$-algebra automorphism, for the algebra structure given by $\id_R \otimes 1 \colon R \to R\otimes R$. Thus for $x \in A \otimes R \otimes R$, we have $\alpha_{(\id_A \otimes \varphi)(x)} = (\id_A \otimes \varphi) \circ \alpha_x \circ (\id_A \otimes \varphi)^{-1}$, and taking determinants we obtain
\begin{equation}
\label{eq:norm_phi}
N_{A\otimes R} \circ (\id_A \otimes \varphi) = N_{A\otimes R}.
\end{equation}
In addition, the composite of the above automorphism $G \times G \to G \times G$ with the multiplication map of $G$ is the second projection, so that
\begin{equation}
\label{eq:phi_m}
\varphi \circ m = 1 \otimes \id_R.
\end{equation}

Since
\begin{align*}
(\id_A \otimes 1) \circ N_A \circ \mu 
&= N_{A\otimes R} \circ (\id_A \otimes 1 \otimes \id_R) \circ \mu & \text{ by \eqref{eq:ext}}\\ 
&= N_{A\otimes R} \circ (\id_A \otimes \varphi) \circ (\id_A \otimes m) \circ \mu & \text{ by \eqref{eq:phi_m}}\\
&= N_{A\otimes R} \circ (\id_A \otimes m) \circ \mu & \text{ by \eqref{eq:norm_phi}}\\
&= N_{A\otimes R} \circ (\mu \otimes \id_R) \circ \mu& \text{ by \eqref{eq:action}}\\
&= \mu \circ N_A \circ \mu & \text{ by \eqref{eq:ext}}
\end{align*}
the map $N_A \circ \mu\colon A \to A$ factors through a map $N \colon A \to A^G$ (see \eqref{eq:AG}).

When $K$ is a field containing $k$, any element of $X^G(K)$ is given by a $G$-equivariant morphism $\Spec K \to X$ for the trivial $G$-action on $K$. Let $f \colon A \to K$ be the corresponding $k$-algebra morphism. Then for any $x\in A$, by \eqref{eq:ext} we have 
\begin{equation}
\label{eq:norm}
f \circ N_A \circ \mu(x) = N_K\circ (f \otimes \id_R) \circ \mu(x) = N_K (f(x) \otimes 1) = f(x)^n,
\end{equation}
where $n = \dim_k R$ is the order of $G$.

Let now $f_1,f_2 \colon A \to K$ be the $k$-algebra morphisms corresponding to a pair of elements of $X^G(K)$ mapping to the same element in $(X/G)(K)$. Then the restriction of $f_i$ to $A^G$ does not depend on $i$. It follows that the map 
\[
f_i \circ N_A \circ \mu = f_i|_{A^G} \circ N
\]
does not depend on $i$. In view of \eqref{eq:norm}, we have in $K$, for any $x \in A$
\[
f_1(x)^n = f_2(x)^n.
\]

Writing $n=p^mq$ where $p$ is the characteristic exponent of $k$ and $q$ is prime to $p$, we obtain in $K$, for any $x\in A$,
\[
f_1(x)^q = f_2(x)^q.
\]
Let now $a\in A$ and $\lambda \in k$. The above equality for $x=a+\lambda$ yields
\[
(f_1(a)+\lambda)^q = (f_1(a + \lambda))^q = (f_2(a + \lambda))^q = (f_2(a)+\lambda)^q,
\]
so that any element of $k \subset K$ is a root of the polynomial
\[
(f_1(a)+X)^q - (f_2(a)+X)^q \in K[X],
\]
which must therefore vanish (as $k$ is infinite). Its coefficient at $X^{q-1}$ is
\[
q \cdot f_1(a) - q \cdot f_2(a).
\]
Since $q$ is invertible in $k$, we conclude that $f_1 = f_2$.
\end{proof}

\subsection{Free actions}

\begin{definition}
Let $G$ be an algebraic group acting on a variety $X$. The group $G$ \emph{acts freely on $X$} if the morphism $\gamma \colon G \times X \to X \times X$ given by $(g,x) \mapsto (g\cdot x,x)$ is a monomorphism. It is equivalent to require that for every variety $B$ and $x \in X(B)$ the set of those $g \in G(B)$ such that $g \cdot x =x$ be reduced to the unit element $1 \in G(B)$.
\end{definition}

\begin{proposition}
\label{prop:G_torsor}
Let $G$ be a finite algebraic group acting freely on a variety $X$. Then the quotient morphism $\varphi \colon X \to X/G$ is flat and finite of degree equal to the order of $G$, and the following square is cartesian, where the unlabelled arrows denote respectively the second projection and the action morphism
\[ \xymatrix{
G \times X\ar[r] \ar[d] & X \ar[d]^\varphi \\ 
X \ar[r]^\varphi & X/G
}\]
\end{proposition}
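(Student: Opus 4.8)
The plan is to reduce to the affine case, deduce finiteness of $\varphi$ from the norm construction in the proof of \rref{lemm:quotient_radicial}, extract from the freeness hypothesis the closed subscheme underlying the expected torsor structure, and then settle flatness by invoking the structure theory of torsors under finite group schemes. Since $G$ is finite, $X$ is covered by $G$-invariant affine open subschemes (this is where the hypothesis on varieties is used), and both the formation of $X/G$ and the statements to be proved are local on $X/G$; so I may assume $X=\Spec A$, with $X/G=\Spec A^G$. Write $R=H^0(G,\Oc_G)$, set $n=\dim_k R$ and $B=A^G$, and let $\mu\colon A\to A\otimes_k R$ be the co-action and $\epsilon\colon R\to k$ the counit, so that $(\id_A\otimes\epsilon)\circ\mu=\id_A$.

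To see that $\varphi$ is finite, fix $a\in A$ and let $\chi_a(T)\in A[T]$ be the characteristic polynomial of the $A$-linear endomorphism of the free rank-$n$ module $A\otimes_k R$ given by multiplication by $\mu(a)$. By Cayley--Hamilton $\chi_a(\mu(a))=0$, and applying $\id_A\otimes\epsilon$ gives $\chi_a(a)=0$ in $A$. Moreover $\chi_a(T)$ is the image of $T-a$ under the map $N_{A[T]}\circ\mu_{A[T]}$ attached, as in the proof of \rref{lemm:quotient_radicial}, to the polynomial ring $A[T]$ with its induced $G$-action; since the invariant subring of $A[T]$ is $B[T]$, that proof shows that $\chi_a$ has coefficients in $B$. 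Thus every element of $A$ is integral over $B$, and as $A$ is a finitely generated $k$-algebra it is a finite $B$-module; hence $\varphi$ is finite, and it is surjective, being induced by an injective integral ring map.

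Next I would use freeness to pin down the candidate torsor. The morphism $\gamma\colon G\times X\to X\times X$, $(g,x)\mapsto(g\cdot x,x)$, is a monomorphism — this is the definition of a free action — and it is proper: its composite with the second projection $X\times X\to X$ is the finite (hence proper) morphism $G\times X\to X$, while the second projection is separated because $X$ is, so $\gamma$ is proper by cancellation. A proper monomorphism is a closed immersion, so $\gamma$ is one; and since $X/G$ is separated, $X\times_{X/G}X$ is a closed subscheme of $X\times X$ through which $\gamma$ visibly factors, so the induced morphism $\psi\colon G\times X\to X\times_{X/G}X$, $(g,x)\mapsto(g\cdot x,x)$, is a closed immersion as well. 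Dually, the homomorphism of $k$-algebras $\kappa\colon A\otimes_B A\to A\otimes_k R$, $a\otimes a'\mapsto\mu(a)\cdot(a'\otimes1)$, is surjective.

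There remains the heart of the matter: showing that $\psi$ is an isomorphism and that $\varphi$ is flat of degree $n$ — equivalently, that $\kappa$ is injective. This is precisely the assertion that $\varphi$ is an fppf $G$-torsor, and I expect it to be the main obstacle. Rather than reprove it, I would invoke the structure theory of quotients by free actions of finite flat group schemes, e.g.\ \cite[V, Th\'eor\`eme~4.1]{SGA3-1}, which delivers the flatness, the degree, and the cartesian square at one stroke. (A self-contained argument is also possible, along the lines of the Galois theory of commutative rings: freeness is exactly the triviality of the inertia of the action, from which one deduces that $\kappa$ is bijective; the resulting isomorphism $A\otimes_B A\cong A\otimes_k R$, together with the fact that its right-hand side is $A$-free of rank $n$, then exhibits $A$ — via a dual basis for the Hopf--Galois extension $B\subseteq A$ — as a finite locally free $B$-module of rank $n$, whence the remaining claims follow formally.)
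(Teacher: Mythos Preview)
Your proposal is correct, and it ultimately rests on the same reference as the paper: the paper's entire proof is the single sentence ``See \cite[\S12, Theorem~1(B)]{Mumford-AV}, or \cite[V, Th\'eor\`eme~4.1(iv)]{SGA3-1}'', and you end up invoking \cite[V, Th\'eor\`eme~4.1]{SGA3-1} for the substantive part as well. The additional work you do --- establishing finiteness of $\varphi$ via the norm/Cayley--Hamilton argument, and showing $\gamma$ is a closed immersion via the proper-monomorphism criterion --- is correct and self-contained, but it is redundant once you appeal to the SGA3 theorem, which delivers finiteness, flatness, the degree, and the cartesian square simultaneously. Your parenthetical Hopf--Galois sketch is a genuine alternative route, but as written it remains a sketch.
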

\begin{proof}
See \cite[\S 12, Theorem~1(B)]{Mumford-AV}, or \cite[V, Th\'eor\`eme~4.1(iv)]{SGA3-1}.
\end{proof}

\begin{lemma}
\label{lemm:free_cartesian}
Let $G$ be a finite algebraic group acting freely on a variety $X$. Let $X' \to X$ be a $G$-equivariant morphism. Then $G$ acts feely on $X'$, and the following square is cartesian
\[ \xymatrix{
X'\ar[r] \ar[d] & X \ar[d] \\ 
X'/G \ar[r] & X/G
}\]
\end{lemma}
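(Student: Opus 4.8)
The plan is to deduce both claims from Proposition \ref{prop:G_torsor}, which turns a free $G$-action into a $G$-torsor structure. First I would verify that $G$ acts freely on $X'$: this is immediate from the $G$-equivariance of $X' \to X$, since for any variety $B$, any $x' \in X'(B)$ with image $x \in X(B)$, and any $g \in G(B)$ with $g \cdot x' = x'$, one gets $g \cdot x = x$, hence $g = 1$ because $G$ acts freely on $X$. Consequently Proposition \ref{prop:G_torsor} applies to $X'$ as well as to $X$: both $X \to X/G$ and $X' \to X'/G$ are finite, flat and surjective, and in particular $X' \to X'/G$ is a faithfully flat, quasi-compact covering.

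There is a canonical morphism $j \colon X' \to X \times_{X/G} (X'/G)$ induced by the equivariant map $X' \to X$ and the quotient map $X' \to X'/G$; these agree after composing to $X/G$ by the very definition of the morphism $X'/G \to X/G$. The assertion is exactly that $j$ is an isomorphism, and since this can be checked after the faithfully flat base change along $X' \to X'/G$, the heart of the matter is to identify the base-changed morphism
\[
j' \colon X' \times_{X'/G} X' \longrightarrow \bigl(X \times_{X/G} (X'/G)\bigr) \times_{X'/G} X' = X \times_{X/G} X'.
\]
Applying Proposition \ref{prop:G_torsor} to $X'$ identifies the source with $G \times X'$ via $(g,x') \mapsto (g\cdot x', x')$; applying it to $X$, together with the identity $X \times_X X' = X'$ and the cancellation rules for fiber products, identifies the target with $G \times X'$ via $(g, x') \mapsto (g \cdot \bar{x'}, x')$, where $\bar{x'} \in X$ denotes the image of $x'$. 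Chasing the definitions and using once more the equivariance of $X' \to X$ (so that $\overline{g \cdot x'} = g \cdot \bar{x'}$), one checks that under these identifications $j'$ becomes the identity of $G \times X'$; hence $j'$, and therefore $j$, is an isomorphism.

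The one point requiring care is the bookkeeping in that last step: one must set up the two identifications of source and target with $G \times X'$ along matching projections — this is where the precise form of the cartesian square in Proposition \ref{prop:G_torsor} enters — and then invoke the standard fact that being an isomorphism descends along the faithfully flat covering $X' \to X'/G$. I do not expect a genuine obstacle here; everything reduces to repeated use of Proposition \ref{prop:G_torsor}. Conceptually, this is just the general principle that a $G$-equivariant morphism of $G$-torsors lying over a morphism of the base schemes is automatically cartesian, since after pulling back to trivialize the target torsor it becomes a $G$-equivariant map of trivial torsors, necessarily translation by a section and hence invertible.
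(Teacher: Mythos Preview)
Your proof is correct and follows essentially the same strategy as the paper: verify freeness on $X'$ via the functor of points, then use faithfully flat descent together with the torsor description of Proposition~\ref{prop:G_torsor} to reduce to a trivial identification. The paper performs two successive descent steps (first trivialising $X$ over $X/G$, then $X'$ over $X'/G$) and finishes with the observation that an equivariant $S'$-endomorphism of an $S'$-group scheme is translation by a section, whereas you base-change once along $X' \to X'/G$ and compute directly that $j'$ is the identity of $G \times X'$; these are minor variations of the same argument, and your final conceptual remark is precisely the paper's route.
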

\begin{proof}
The first statement follows from the definition of a free action in terms of functors of points. To prove that the square is cartesian, by descent and \rref{prop:G_torsor} we may assume that $X=G \times S$, where $G$ acts trivially on $S$. Let $S'=X'/G$. We need to prove that the $G$-equivariant $S'$-morphism $X' \to G \times S'$ is an isomorphism. To do so, by descent and \rref{prop:G_torsor} we may assume that $X'=G \times S'$. But any equivariant $S'$-endomorphism of an $S'$-group scheme coincides with multiplication by the image of the unit, hence must be an isomorphism.
\end{proof}

\begin{lemma}
\label{lemm:nofix_free}
Let $G$ be an algebraic group of prime order acting on a variety $X$. If $X^G= \varnothing$, then the action is free.
\end{lemma}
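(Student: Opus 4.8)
The plan is to study the stabiliser of the action and show that it reduces to the unit section. Write $p$ for the order of $G$, a prime number, and let $S \subseteq G \times X$ be the \emph{stabiliser}: the closed subscheme $\gamma^{-1}(\Delta_X)$, where $\gamma \colon G \times X \to X \times X$ sends $(g,x)$ to $(g\cdot x,x)$, this being a closed subscheme because $X$ is separated. Then $S$ is a subgroup scheme of $G \times X$ over $X$; write $\pi \colon S \to X$ for the structure morphism and $e \colon X \to S$ for its unit section, so that $\pi \circ e = \id_X$. Since $G \times X \to X$ is finite, so is $\pi$. Unravelling the characterisation of free actions in terms of functors of points recalled above, the action is free precisely when $S = e(X)$ inside $G \times X$, equivalently when $\pi$ is an isomorphism: one implication is formal, and for the other it suffices to test the defining condition on the variety $B = S$ together with the tautological point $\pi \in X(S)$, whose stabiliser visibly contains the first-projection morphism $S \to G$, forcing the latter to be constant equal to the unit.

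First I would check that every fibre of $\pi$ is trivial. Fix a point $x \in X$, put $\kappa = k(x)$, and consider $S_x = S \times_X \Spec \kappa$, a closed subgroup scheme of $G_\kappa = G \times_k \Spec \kappa$, which is finite of order $p$. Because the order of a subgroup of a finite algebraic group divides the order of the group, $S_x$ has order $1$ or $p$. Were it $p$, then the closed immersion $S_x \hookrightarrow G_\kappa$ would be a surjection of $\kappa$-algebras of equal finite dimension, hence an isomorphism; then $x \colon \Spec \kappa \to X$ would be fixed by $G$ and would factor through $X^G$, contradicting $X^G = \varnothing$. Therefore $S_x = \Spec \kappa$ for every $x \in X$.

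It remains to propagate the triviality of the fibres to the whole of $S$. For $s \in S$ lying over $x$, triviality of $S_x$ gives $\Oc_{S,s}/\mathfrak{m}_x \Oc_{S,s} = k(x)$, and $\Oc_{S,s}$ is a finite $\Oc_{X,x}$-module, so Nakayama's lemma shows that $\Oc_{X,x} \to \Oc_{S,s}$ is surjective. As $s$ is the unique point of $S$ over $x$, the sheaf map $\Oc_X \to \pi_*\Oc_S$ is surjective, so $\pi$ is a closed immersion; being split by the section $e$, it is then an isomorphism, so $S = e(X)$ and the action is free. The delicate point is exactly this last passage from ``all fibres trivial'' to ``$\pi$ an isomorphism'': what makes it work is that the fibres are \emph{literally} equal to $\Spec k(x)$ — not merely reduced or set-theoretically a point — which is both what makes Nakayama applicable and precisely where the primality of $p$ is used; for a group of composite order a proper subgroup could persist as a nontrivial fibre (e.g.\ over a non-closed point as an infinitesimal thickening of the unit section), and the argument would, correctly, fail.
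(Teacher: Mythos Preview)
Your proof is correct, and the core idea---that every fibrewise stabiliser is a subgroup scheme of $G_\kappa$ of order dividing $p$, hence trivial or all of $G_\kappa$, the latter being excluded by $X^G=\varnothing$---is the same as the paper's. The passage from fibrewise triviality to global freeness is handled differently, however. The paper works directly with $\gamma\colon G\times X\to X\times X$: since $\gamma$ is proper, it invokes \cite[(8.11.5)]{ega-4-3} to reduce the monomorphism condition to checking that every fibre $S_{x,y}$ over a \emph{geometric} point of $X\times X$ is either empty or a single reduced point, and then treats the off-diagonal fibres by observing that a nonempty $S_{x,y}$ is a torsor under the stabiliser $S_{x,x}$. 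You instead reformulate freeness as the triviality of the stabiliser group scheme $S\to X$, verify this fibre by fibre over all (not only geometric) points of $X$, and then use Nakayama's lemma to promote the fibrewise isomorphism to a global one. Your route is more elementary and self-contained, trading the black-box EGA reference for a concrete commutative-algebra argument; the paper's route avoids the reduction to the stabiliser by attacking $\gamma$ itself, at the cost of handling the extra off-diagonal fibres.
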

\begin{proof}
For a $k$-scheme $F$ and $x,y \in X(F)$, we denote by $S_{x,y}$ the fiber of the morphism $\gamma \colon G \times X \to X \times X$ over $(x,y) \colon F \to X \times X$. Let us write $G_F = G \times F$ and $X_F=X \times F$. For any $k$-scheme $E$, the set $S_{x,y}(E)$ may be identified with the subset of $G_F(E)$ consisting of those $g$ such that $g \cdot y =x \in X(E)$. In particular $S_{x,x}$ is a subgroup of the $F$-group scheme $G_F$ (the stabiliser of $x$), and the $F$-scheme $S_{x,y}$ is isomorphic to $S_{x,x}$ as soon as $S_{x,y}(F) \neq \varnothing$.

We now prove the statement. The morphism $\gamma \colon G \times X \to X \times X$ being proper, by \cite[(8.11.5)]{ega-4-3} it will suffice to prove that $S_{x,y} \to F$ is either empty or an isomorphism, when $F=\Spec K$ for an algebraically closed field $K$ containing $k$, and $x,y \in X(F)$. Assume that $S_{x,y}$ is nonempty. Then the finite type $K$-scheme $S_{x,y}$ possesses a rational point. It follows from the discussion above that the $F$-scheme $S_{x,y}$ is isomorphic to the subgroup $S_{x,x}$ of $G_F$. But the $K$-group scheme $G_F$ has exactly two subgroups (the order of a subgroup must divide its order). If $S_{x,x} = G_F$, then the morphism $x \colon F \to X$ is $G$-equivariant for the trivial action on $F$, hence factors through $X^G$; this would contradict the assumption that $X^G=\varnothing$. Thus the $K$-group scheme $S_{x,x}$ is trivial, and $S_{x,y} \to F$ is an isomorphism.
\end{proof}

\subsection{Equivariant modules}
\label{sect:equ-modules}

\begin{definition}
Let $G$ be an algebraic group acting on a variety $X$, and $\Fc$ a quasi-coherent $\Oc_X$-module. A \emph{$G$-equivariant structure} on $\Fc$ is an isomorphism $\mu \colon a^*\Fc \to p^*\Fc$, where  $a,p\colon G \times X \to X$ are respectively the action morphism and the second projection, making the following diagram commute
\begin{equation}
\label{diag:equivariant_mod}
\begin{aligned}
\xymatrix{
(\id_G \times a)^*a^*\Fc \ar[rr]^{(\id_G \times a)^*(\mu)} \ar@{=}[d]&& (\id_G \times a)^*p^*\Fc \ar@{=}[r]& q^*a^*\Fc \ar[d]^{q^*(\mu)} \\ 
m^*a^*\Fc \ar[rr]^{m^*(\mu)} &&m^*p^*\Fc \ar@{=}[r]& q^*p^*\Fc
}
\end{aligned}
\end{equation}
where $q \colon G \times G \times X \to G \times X$ is the projection on the last two factors, and $m\colon G \times G \times X\to G \times X$ is the base-change of the multiplication map of $G$. We will often omit to mention the isomorphism $\mu$, and simply say that $\Fc$ is a $G$-equivariant quasi-coherent $\Oc_X$-module. This is the same definition as \cite[I, \S6.5]{SGA3-1}, where the cocyle condition is instead expressed in terms of the inverse $\theta$ of $\mu$. We will say that a vector bundle is \emph{$G$-equivariant} when its sheaf of sections is a $G$-equivariant module.

Let $\Fc$ and $\Gc$ be two $G$-equivariant quasi-coherent $\Oc_X$-modules. A morphism $\phi \colon \Fc \to \Gc$ of $\Oc_X$-modules will be called $G$-equivariant if the following diagram commutes
\[ \xymatrix{
a^*\Fc\ar[r] \ar[d]_{a^*(\phi)} & p^*\Fc \ar[d]^{p^*(\phi)} \\ 
a^*\Gc \ar[r] & p^*\Gc
}\]
This defines a category $\Mod{G}{X}$. The category $\Mod{G}{\Spec k}$ (for the trivial $G$-action on $\Spec k$) is the category of $G$-representations over $k$. A $G$-equivariant morphism $f\colon Y \to X$ induces functors $f^* \colon \Mod{G}{X} \to \Mod{G}{Y}$ (see \cite[I, Lemme 6.3.1]{SGA3-1}) and $\R^if_* \colon \Mod{G}{Y} \to \Mod{G}{X}$ for every $i$ (the proof of \cite[I, Lemme 6.6.1]{SGA3-1} also works for higher direct images).
\end{definition}

\begin{noname}
A morphism of algebraic groups $H \to G$ induces a functor $\Mod{G}{X} \to \Mod{H}{X}$. The category $\Mod{1}{X}$ coincides with the category of quasi-coherent $\Oc_X$-modules.
\end{noname}

\begin{noname}
\label{action_is_equiv}
If $\Fc$ is a quasi-coherent $\Oc_X$-module, any $G$-equivariant structure $a^* \Fc \to p^* \Fc$ is a $G$-equivariant morphism. Thus we may define a morphism $a^* \to p^*$ of functors $\Mod{G}{G \times X} \to \Mod{G}{G \times X}$.
\end{noname}

\begin{noname}
If a diagonalisable group $G$ acts trivially on a variety $X$, and $\Fc$ is a quasi-coherent $\Oc_X$-module, a $G$-equivariant structure on $\Fc$ is the same thing as a $\Char{G}$-grading on the $\Oc_X$-module $\Fc$, i.e.\ a direct sum decomposition $\Fc = \bigoplus_{g \in \Char{G}} \Fc_g$ (see \cite[I, 4.7.3]{SGA3-1}).
\end{noname}

\begin{lemma}
\label{lemm:central_diag}
Let $G$ be an algebraic group acting on a variety $X$, and $D$ a central diagonalisable subgroup of $G$ acting trivially on $X$. Let $\Fc$ be a $G$-equivariant quasi-coherent $\Oc_X$-module. Then the $\Char{D}$-grading of the $\Oc_X$-module $\Fc$ lifts to the category $\Mod{G}{X}$.
\end{lemma}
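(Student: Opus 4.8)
The plan is to exploit the fact that a $\Char{D}$-grading on a quasi-coherent $\Oc_X$-module is the same as a $D$-equivariant structure (for the trivial $D$-action on $X$), so the assertion amounts to producing, for each character $\chi \in \Char{D}$, a $G$-subobject $\Fc_\chi \subset \Fc$ in $\Mod{G}{X}$ whose underlying $\Oc_X$-module is the $\chi$-isotypic summand. First I would restrict the given $G$-equivariant structure $\mu \colon a^*\Fc \to p^*\Fc$ along the morphism $G \times D \times X \to G \times X$ obtained from the multiplication $G \times D \to G$ (legitimate since $D$ is central, hence $G \times D \to G$, $(g,d)\mapsto gd$, is a group morphism). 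Comparing this with the restriction along $G \times X \hookrightarrow G \times D \times X$ coming from the unit of $D$, together with the cocycle identity \rref{diag:equivariant_mod}, should express the $G$-action and the $D$-action on $\Fc$ as "commuting" in a suitable sense; concretely, the $\Char{D}$-grading is given by a $D$-equivariant structure $\nu \colon \Fc \to \Fc \otimes_k k[\Char{D}]$ on $X$, and I want to check that $\nu$ is a morphism in $\Mod{G}{X}$ when $\Fc \otimes_k k[\Char{D}]$ is given the evident $G$-equivariant structure (induced from that of $\Fc$, with $G$ acting trivially on the finite-dimensional algebra $k[\Char{D}]$).

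Concretely, the key step is to verify commutativity of the square
\[ \xymatrix{
a^*\Fc \ar[r]^-{\mu} \ar[d]_{a^*(\nu)} & p^*\Fc \ar[d]^{p^*(\nu)} \\
a^*(\Fc \otimes_k k[\Char{D}]) \ar[r]^-{\mu \otimes \id} & p^*(\Fc \otimes_k k[\Char{D}])
}\]
of $\Oc_{G \times X}$-modules. This is where the centrality of $D$ enters decisively: the composite $G \times D \to G$ agrees with $D \times G \to G$, and pulling back $\mu$ along the two resulting morphisms $G \times D \times X \to G \times X$ and using the cocycle condition \rref{diag:equivariant_mod} twice (once for the pair $(g,d)$ and once for $(d,g)$) yields exactly the required identity after restricting along the diagonal-type sections. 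Equivalently, one observes that both $a^*(\nu) $ and $p^*(\nu)$ decompose $a^*\Fc$ and $p^*\Fc$ into eigen-subsheaves for the $D$-action pulled back from $X$, and $\mu$, being the restriction of a $G$-equivariant structure with $D$ central, commutes with that $D$-action; hence $\mu$ is compatible with the eigen-decompositions. Once this square commutes, $\nu$ is a morphism in $\Mod{G}{X}$, so each isotypic projector $\Fc \to \Fc_\chi$ (recovered from $\nu$ by composing with the projection $k[\Char{D}] \to k$ onto the $\chi$-component) lies in $\Mod{G}{X}$, giving the desired lift of the grading.

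The main obstacle I anticipate is purely bookkeeping: correctly identifying the several pullback functors along the various structure morphisms of $G \times D \times X$, and keeping track of which copy of $D$-equivariance is being used, so that the two applications of the cocycle diagram \rref{diag:equivariant_mod} can be matched up. There is no serious geometric content beyond the centrality of $D$ (which guarantees $(g,d)\mapsto gd$ is a homomorphism and that the two orderings agree) and the translation between gradings and diagonalisable-group actions recorded just before the lemma; the argument is a diagram chase, and I would likely present it by reducing to the affine case $X = \Spec B$, $G = \Spec R$, where $\mu$ becomes a comodule structure $B\text{-module } M \to M \otimes_k R$ and the claim is that the Peter--Weyl-type decomposition of $M$ under the central subcoalgebra corresponding to $D$ is a decomposition of $R$-comodules — a statement that follows by a short computation with the comodule axioms.
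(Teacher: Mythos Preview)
Your proposal is correct and follows essentially the same route as the paper. The paper phrases the key step as showing that the $G$-structure isomorphism $\mu\colon a^*\Fc\to p^*\Fc$ is $D$-equivariant (for the trivial $D$-action on $G$), while you phrase it as showing that the $D$-coaction $\nu$ is a morphism in $\Mod{G}{X}$; these are the same commutativity statement viewed from the two sides, and both are established by applying the cocycle diagram \eqref{diag:equivariant_mod} twice and using centrality of $D$ to identify the multiplications $G\times D\to G$ and $D\times G\to G$.
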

\begin{proof}
We denote by $a,p \colon G \times X \to X$ the action and projection morphisms, and by $d \colon D \times X \to X$ their common restriction. Let $\alpha,\pi \colon G \times D \times X \to D \times X$ the base-changes of $a,p$, and $\delta \colon G \times D \times X \to G \times X$ the base-change of $d$. Let $m \colon G \times D \times X \to G \times X$ be the multiplication morphism. Let $s\colon G \times D \times X \to D \times G \times X$ the exchange morphism, and write $\alpha'=\alpha \circ s^{-1}$ and $\delta'=\delta \circ s^{-1}$. Since $D$ is central, the multiplication morphism $m' \colon D \times G \times X \to G \times X$ coincides with $ m \circ s^{-1}$. 

Denote by $\mu \colon a^*\Fc \to p^*\Fc$ the $G$-equivariant structure, and by $\tau \colon d^*\Fc \to d^*\Fc$ the induced $D$-equivariant structure. Using the commutativity of \eqref{diag:equivariant_mod} twice, we see that the following diagram commutes
\[ \xymatrix{
\delta^*a^*\Fc\ar@{=}[r] \ar[d]_{\delta^*(\mu)} & m^*a^*\Fc \ar[ddd]_{m^*(\mu)} \ar@{=}[r] & s^*m'^*a^*\Fc \ar@{=}[r] \ar[ddd]_{s^*m'^*(\mu)}& s^*\alpha'^*d^*\Fc \ar@{=}[r] \ar[d]_{s^*\alpha'^*(\tau)}& \alpha^*d^*\Fc \ar[d]^{\alpha^*(\tau)}\\ 
\delta^*p^*\Fc \ar@{=}[d] & & & s^*\alpha'^*d^*\Fc \ar@{=}[r]\ar@{=}[d]& \alpha^*d^*\Fc\ar@{=}[d]\\
\pi^*d^*\Fc \ar[d]_{\pi^*(\tau)} & & & s^*\delta'^*a^*\Fc \ar@{=}[r]\ar[d]_{s^*\delta'^*(\mu)}& \delta^*a^*\Fc\ar[d]^{\delta^*(\mu)}\\
\pi^*d^*\Fc \ar@{=}[r]& m^*p^*\Fc \ar@{=}[r] & s^*m'^*p^*\Fc \ar@{=}[r] & s^*\delta'^*p^*\Fc \ar@{=}[r] & \delta^*p^*\Fc
}\]

We let $D$ act on $G \times X$ via the trivial action on $G$. Then the morphisms $a,p \colon G \times X \to X$ are $D$-equivariant (since $D$ is central). The commutativity of the exterior rectangle asserts that the morphism $\mu$ is $D$-equivariant, and the lemma follows.
\end{proof}

\begin{definition}
\label{def:invariants}
Let $X$ be a variety with an action of $G$, and $H$ a normal subgroup of $G$ acting trivially on $X$. We let $G$ act on $H$ by $(g,h) \mapsto ghg^{-1}$. The second projection $\pi \colon H \times X \to X$ is $G$-equivariant, and coincides with the action morphism $\alpha \colon H \times X \to X$. The category $\Mod{G/H}{X}$ may be identified with the full subcategory of $\Mod{G}{X}$ consisting of those objects $\Fc$ on which $H$ acts trivially, i.e.\ such that $\pi^*\Fc = \alpha^*\Fc \to \pi^*\Fc$ is the identity. Consider the two morphisms $\id \to \pi_*\pi^*$ of functors $\Mod{G}{X} \to \Mod{G}{X}$, respectively adjoint to the morphism $\pi^* = \alpha^* \to \pi^*$ (see \rref{action_is_equiv}) and the identity of $\pi^*$. Their equaliser defines a morphism
\[
(-)^H \colon \Mod{G}{X} \to \Mod{G/H}{X}.
\]
\end{definition}

\begin{noname}
\label{rem:invariant_sections}
In the conditions of \rref{def:invariants}, let $\Fc \in \Mod{G}{X}$ and let $U$ be an open subscheme of $X$. Since taking the sections over $U$ is a left exact functor $\Mod{G}{X} \to \Mod{G}{\Spec k}$, it follows that $\Fc^H(U) = \Fc(U)^H$.
\end{noname}

\begin{noname}
\label{rem:Tan_is_equ}
The ideal of a $G$-invariant closed subscheme is naturally $G$-equivariant. Using this observation for the diagonal immersion $X \to X \times X$ and pulling back to $X$, we deduce that the cotangent module of $X$ is $G$-equivariant. Thus if $X$ is smooth, its tangent bundle is $G$-equivariant.
\end{noname}

\begin{proposition}
\label{prop:invariant_free}
Let $G$ be an algebraic group acting on a variety $X$. Let $H$ be a finite normal subgroup of $G$ acting freely on $X$, and denote by $\varphi \colon X \to X/H$ the quotient morphism. Then the composite
\[
\Mod{G/H}{X/H} \subset \Mod{G}{X/H} \xrightarrow{\varphi^*} \Mod{G}{X} 
\]
is an equivalence of categories, with quasi-inverse
\[
\Mod{G}{X} \xrightarrow{\varphi_*} \Mod{G}{X/H} \xrightarrow{(-)^H} \Mod{G/H}{X/H}.
\]
These equivalences preserve the full subcategories of coherent, resp.\ locally free coherent, equivariant modules.
\end{proposition}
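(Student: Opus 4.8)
The plan is to reduce everything to the case of trivial $G/H$-action — i.e.\ to ordinary descent along the $H$-torsor $\varphi$ — by exploiting Proposition~\ref{prop:G_torsor}, which tells us that $\varphi\colon X\to X/H$ is finite flat and that the square
\[\xymatrix{H\times X\ar[r]\ar[d]&X\ar[d]^\varphi\\ X\ar[r]^\varphi&X/H}\]
is cartesian. First I would observe that, thanks to this cartesian square, the $H$-equivariant structure on a quasi-coherent $\Oc_X$-module $\Fc$ is literally the datum of a descent datum for $\Fc$ along the faithfully flat morphism $\varphi$ (the two pullbacks of $\Fc$ to $H\times X=X\times_{X/H}X$ are identified via $\mu$, and diagram~\eqref{diag:equivariant_mod} is the cocycle condition on the triple product $H\times H\times X=X\times_{X/H}X\times_{X/H}X$). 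Hence faithfully flat descent (for quasi-coherent modules along the finite flat cover $\varphi$) already gives, \emph{forgetting $G$}, an equivalence between quasi-coherent $\Oc_{X/H}$-modules and $H$-equivariant quasi-coherent $\Oc_X$-modules, with quasi-inverse $\Gc\mapsto(\varphi_*\Gc)^H$; here I would use \rref{rem:invariant_sections} to identify $(\varphi_*\Gc)^H$ with the subsheaf of $H$-invariant sections, which is exactly the classical descended module $\varphi_*\Gc)^{H}\cong \Gc$-side of descent.

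Next I would upgrade this to the $G$-equivariant setting. The point is that all four functors in the statement are $G$-equivariant refinements of the plain descent functors just described: $\varphi$ is $G$-equivariant, so $\varphi^*$ and $\varphi_*$ preserve the $G$-action (using the functors $f^*$, $\R^0f_*$ from the Definition in \S\ref{sect:equ-modules}), and $(-)^H$ lands in $\Mod{G/H}{X/H}$ by \rref{def:invariants}. Since the unit $\Fc\to\varphi^*(\varphi_*\Fc)^H$ and counit $\varphi^*\Gc\to\Gc$... — more precisely the adjunction (iso)morphisms witnessing the plain descent equivalence — are morphisms of $\Oc_X$- (resp.\ $\Oc_{X/H}$-) modules between objects that now carry $G$-equivariant structures, I would check that these morphisms are automatically $G$-equivariant. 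This is a diagram chase: the unit and counit are built from the adjunction $\varphi^*\dashv\varphi_*$ and from the morphism $\pi^*\to\pi^*$ of \rref{action_is_equiv} cut out by $(-)^H$, all of which are morphisms of functors on $\Mod{G}{-}$, so naturality does the work. Concretely, one shows that $G$-equivariance of a morphism is detected after applying the faithful forgetful functor $\Mod{G}{-}\to\Mod{1}{-}$ together with a pullback along the torsor, so "equivariant iso" reduces to "iso", which is the plain descent statement.

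Finally, the statement about preservation of the coherent and locally free coherent subcategories is purely a question about the underlying $\Oc$-modules: coherence descends and ascends along the finite flat morphism $\varphi$ (finite flat morphisms are faithfully flat of finite presentation, so both $\varphi^*$ and $(\varphi_*-)^H$ preserve coherence), and likewise local freeness descends along a faithfully flat morphism and is obviously preserved by $\varphi^*$; neither property sees the $G$-action, so there is nothing more to check. I expect the main obstacle to be the second paragraph: carefully setting up the $G$-equivariant structures on $\varphi^*\Fc$, $\varphi_*\Fc$ and $(\varphi_*\Fc)^H$ and verifying that the descent unit/counit are $G$-equivariant, i.e.\ making precise that ordinary faithfully flat descent along $\varphi$ is compatible with the residual $G$-action. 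Everything else — identifying $H$-equivariance with a descent datum via the cartesian square of \rref{prop:G_torsor}, and the coherence/local-freeness claims — is routine once that compatibility is in place.
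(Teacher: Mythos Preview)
Your approach is correct and is essentially the same as the paper's: identify the $H$-equivariant structure with a descent datum along the faithfully flat morphism $\varphi$ via the cartesian square of Proposition~\ref{prop:G_torsor}, invoke faithfully flat descent, and then observe that the equivariant structures come along for free.

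The only difference is one of economy. You flag the verification that the unit/counit are $G$-equivariant as the ``main obstacle'' and propose a diagram chase; the paper dispatches this in one line. Namely, the morphism $\varphi^*((\varphi_*\Fc)^H)\to\Fc$ is constructed directly inside $\Mod{G}{X}$ as the adjoint of the natural inclusion $(\varphi_*\Fc)^H\hookrightarrow\varphi_*\Fc$ (all functors and this inclusion already live in the equivariant categories by \S\ref{sect:equ-modules} and Definition~\ref{def:invariants}), so its $G$-equivariance is automatic. It then suffices to check that this morphism is an isomorphism of underlying $\Oc_X$-modules, since the inverse of a $G$-equivariant isomorphism is automatically $G$-equivariant. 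That last step is exactly ordinary descent. So the work you anticipate in your second paragraph largely evaporates once you set things up so that the natural transformation is born equivariant, rather than checking equivariance after the fact.
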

\begin{proof}
Consider the morphism $\varphi^*((\varphi_*(-) )^H) \to \id$ of functors $\Mod{G}{X} \to \Mod{G}{X}$ which is adjoint to the natural morphism $(\varphi_*(-) )^H \to \varphi_*(-)$. It will suffice to prove that for every $G$-equivariant quasi-coherent $\Oc_X$-module $\Fc$, the morphism $\varphi^*((\varphi_*(\Fc) )^H) \to \Fc$ is an isomorphism of $\Oc_X$-modules (its inverse is then automatically $G$-equivariant). But this follows from descent theory: in view of \rref{prop:G_torsor}, the $H$-equivariant structure on $\Fc$ is precisely a descent datum with respect to the faithfully flat morphism $\varphi$.
\end{proof}

\section{Algebraic cycles}

\subsection{Grothendieck groups}
\begin{noname}
Let $X$ be a noetherian separated scheme. We denote by $K_0'(X)$ (resp.\ $K_0(X)$) the Grothendieck group (resp.\ ring) of coherent (resp.\ locally free coherent) $\Oc_X$-modules. The class of an $\Oc_X$-module $\Fc$ will be denoted by $[\Fc]$. The tensor product induces a $K_0(X)$-module structure on $K_0'(X)$. A proper, resp.\ flat, morphism $f\colon Y \to X$ induces a push-forward morphism $f_* \colon K_0'(Y) \to K_0'(X)$, resp.\ pull-back morphism $f^* \colon K_0'(X) \to K_0'(Y)$.
\end{noname}

\begin{noname}
A filtration is defined by letting $K_0'(X)_{(n)}$ be the subgroup of $K_0'(X)$ generated by images of the push-forward morphisms $K_0'(Z) \to K_0'(X)$, where $Z$ runs over the closed subschemes of $X$ of dimension $\leq n$. This filtration is compatible with push-forward morphisms \cite[X, Proposition 1.1.8]{sga6}. If $f \colon Y \to X$ is a flat morphism of relative dimension $d$, then $f^*K_0'(X)_{(n)} \subset K_0'(Y)_{(n+d)}$ (see \cite[X, \S3.3]{sga6}).
\end{noname}

\begin{noname}
When $X$ is a complete variety, we denote by $\chi(X,-) \colon K_0'(X) \to \Zz$ the push-forward morphism along the structural morphism $X \to \Spec k$. This is the unique morphism sending the class of a coherent $\Oc_X$-module $\Fc$ to its Euler characteristic
\begin{equation}
\label{def:Euler}
\chi(X,\Fc) = \sum_{i=0}^{\dim X} \dim_k H^i(X,\Fc).
\end{equation}
\end{noname}

\begin{noname}
Let now $X$ be a variety with an action of an algebraic group $G$. We denote by $K_0'(X;G)$ the Grothendieck group of $G$-equivariant coherent $\Oc_X$-modules \cite[\S1.4]{Thomason-group}.  Forgetting the $G$-equivariant structure induces a morphism $K_0'(X;G) \to K_0'(X)$. A finite, resp.\ flat, $G$-equivariant morphism $f\colon Y \to X$ induces a push-forward morphism $f_* \colon K_0'(Y;G) \to K_0'(X;G)$, resp.\ pull-back morphism $f^* \colon K_0'(X;G) \to K_0'(Y;G)$. The Grothendieck ring of $G$-equivariant locally free coherent $\Oc_X$-modules will be denoted by $K_0(X;G)$. Any $G$-equivariant morphism $f \colon Y \to X$ induces a pull-back morphism $K_0(X;G) \to K_0(Y;G)$, and $K_0'(X;G)$ is naturally a $K_0(X;G)$-module.
\end{noname}

\begin{proposition}
\label{prop:K_free}
Let $G$ be an algebraic group acting on $X$, and $H$ a finite normal subgroup of $G$ acting freely on $X$. Then the pull-back morphism $\KG{X/H}{G} \to \KG{X}{G}$ is surjective.
\end{proposition}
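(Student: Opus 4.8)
The plan is to deduce the statement directly from \rref{prop:invariant_free}. Write $\varphi \colon X \to X/H$ for the quotient morphism. The variety $X/H$ carries a natural $G/H$-action, hence by restriction along $G \to G/H$ a $G$-action, and $\varphi$ is $G$-equivariant for these actions. By \rref{prop:G_torsor} the morphism $\varphi$ is flat and finite, so the flat pull-back $\varphi^* \colon \KG{X/H}{G} \to \KG{X}{G}$ is defined. Since $\KG{X}{G}$ is generated by the classes $[\Fc]$ of $G$-equivariant coherent $\Oc_X$-modules $\Fc$, it suffices to show that each such class lies in the image of $\varphi^*$.

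Now apply \rref{prop:invariant_free}: the composite functor $\Mod{G/H}{X/H} \subset \Mod{G}{X/H} \xrightarrow{\varphi^*} \Mod{G}{X}$ is an equivalence of categories preserving coherent objects; in particular it is essentially surjective on coherent equivariant modules. Hence the given $\Fc$ is isomorphic to $\varphi^*\Gc$ for some coherent object $\Gc$ of $\Mod{G/H}{X/H}$.

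Finally, regard $\Gc$ as an object of $\Mod{G}{X/H}$ by restricting its equivariant structure along $G \to G/H$; this yields a class $[\Gc] \in \KG{X/H}{G}$. Flat pull-back of modules along $\varphi$ is compatible with this restriction of equivariant structures (both amount to the same underlying $\Oc_X$-module equipped with the pulled-back cocycle), so the module $\varphi^*\Gc$ computed in $\Mod{G}{X}$ agrees with the one produced by the equivalence above, and therefore $\varphi^*[\Gc] = [\varphi^*\Gc] = [\Fc]$. Thus $[\Fc]$ belongs to the image of $\varphi^*$, which proves surjectivity. The only point needing justification beyond \rref{prop:invariant_free} is this last compatibility, which is formal; there is no substantial obstacle.
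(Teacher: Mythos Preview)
Your proof is correct and follows exactly the same approach as the paper, which simply records that the statement follows from \rref{prop:invariant_free}; you have merely spelled out the details implicit in that one-line reference.
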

\begin{proof}
This follows from \rref{prop:invariant_free}.
\end{proof}

\subsection{Chow groups}

\begin{noname}
When $X$ is a variety, we denote by $\Zo(X)$ the free abelian group generated by the integral closed subschemes of $X$. Every closed subscheme $Z$ of $X$ has a class $[Z]\in \Zo(X)$ (defined as the sum of its irreducible components weighted by the lengths of the local rings of $Z$ at their generic points). The Chow group $\CH(X)$ is the quotient of $\Zo(X)$ by rational equivalence; we refer to \cite{Ful-In-98} (where the notation $A_*(X)$ is used) for its basic properties. 
\end{noname}

\begin{noname}
\label{sect:equiv_Chow}
When $X$ is a variety with an action of an algebraic group $G$, we will write $\CH_G(X)$ for the equivariant Chow group of $X$, denoted by $A^G_*(X)$ in \cite{EG-Equ}. This group is constructed using intersection theory on algebraic spaces. The consideration of algebraic spaces is not required when $G$ is a finite algebraic group (the main case of interest in this paper), since quotients of varieties always exist as varieties (see \S\ref{sect:quotient}).
\end{noname}

\begin{noname}
\label{sect:CHequ_loc}
Let $i \colon Y \to X$ be the immersion of a $G$-invariant closed subscheme, and $u\colon U \to X$ its open complement. The localisation sequence \cite[Proposition~1.8]{Ful-In-98} induces an exact sequence
\[
\CH_G(Y) \xrightarrow{i_*} \CH_G(X) \xrightarrow{u^*} \CH_G(U) \to 0.
\]
\end{noname}

\begin{noname}
The forgetful morphism $\CH_G(X) \to \CH(X)$ is defined as follows. Any element of $\CH_G(X)$ is represented by an element of $\CH( (X \times W)/G)$, where $W$ is a nonempty $G$-invariant open subscheme of a finite-dimensional $G$-representation where $G$ acts freely (the quotient $(X \times W)/G$ exists as an algebraic space by \cite[Corollary~6.3]{Artin-versal}). Composing the flat pull-back $\CH((X \times W)/G) \to \CH(X \times W)$ with the isomorphism $\CH(X \times W) \simeq \CH(X)$ of \rref{lemm:pt} below, we obtain a morphism $\CH_G(X) \to \CH(X)$.
\end{noname}

\begin{lemma}
\label{lemm:free_equpb}
Let $G$ be a finite algebraic group acting on $X$, and $H$ a normal subgroup of $G$ acting freely on $X$. Then the morphism $\CH_G(X/H) \to \CH_G(X)$ is surjective.
\end{lemma}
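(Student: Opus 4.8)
The plan is to reduce to a computation with the Borel-type mixed quotients defining the equivariant Chow groups, and there to exploit the freeness of the $H$-action. Fix a homological degree. Choose a finite-dimensional $G$-representation $V$ and a dense $G$-invariant open subscheme $U \subseteq V$ on which $G$ acts freely, with $\codim_V(V \setminus U)$ large compared with the degree and with $\dim X$. As $G$ acts freely on $U$, it acts freely on $X \times U$ and on $(X/H) \times U$, so the categorical quotients $(X \times U)/G$ and $((X/H) \times U)/G$ exist as varieties; by the construction of $\CH_G(-)$, the equivariant Chow groups of $X$ and of $X/H$ in the fixed degree are computed by the Chow groups of these two quotients, and the morphism of the statement is flat pull-back along the morphism $\bar\varphi \colon (X \times U)/G \to ((X/H) \times U)/G$ induced by $\varphi$ — which is finite and flat, since by \rref{lemm:free_cartesian} applied to the finite flat $G$-equivariant morphism $\varphi \times \id_U$ it becomes $\varphi \times \id_U$ after the faithfully flat base change $(X/H) \times U \to ((X/H) \times U)/G$. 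So it suffices to prove that $\bar\varphi^*$ is surjective in the relevant range of degrees.

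Now $H$ acts freely on $X$, hence on $X \times V$; by \rref{lemm:free_cartesian} applied to the projection $X \times V \to X$, the quotient $E := (X \times V)/H$ is a rank-$(\dim V)$ vector bundle over $X/H$ — it becomes the trivial bundle after the faithfully flat base change $X \to X/H$ — and the residual $G/H$-action makes it a $G/H$-equivariant vector bundle. Its open subscheme $E^\circ := (X \times U)/H$ has complement of codimension $\codim_V(V \setminus U)$. Since $G$ acts freely on $X \times U$, the group $G/H$ acts freely on $E^\circ$, with quotient $(X \times U)/G$; likewise $G/H$ acts freely on $(X/H) \times (U/H)$ through its free action on $U/H$, and applying \rref{lemm:free_cartesian} to the $G/H$-equivariant morphism $\rho \colon E^\circ \to (X/H) \times (U/H)$ induced by $\varphi$ and $U \to U/H$, the square
\[ \xymatrix{
E^\circ \ar[r]^{\rho} \ar[d] & (X/H) \times (U/H) \ar[d] \\
(X \times U)/G \ar[r]^{\bar\varphi} & ((X/H) \times U)/G
}\]
is cartesian, the vertical maps being the $G/H$-torsor quotients. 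Under the identification of the equivariant Chow group of a free $G/H$-variety with the Chow group of its quotient, $\bar\varphi^*$ is thereby identified with the pull-back $\rho^*$ on $G/H$-equivariant Chow groups.

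It remains to see that $\rho^*$ is surjective in the relevant range. By $G/H$-equivariant homotopy invariance \cite{EG-Equ}, flat pull-back along the projection $E \to X/H$ is an isomorphism of $G/H$-equivariant Chow groups (raising homological degree by $\dim V$); restricting to the open subscheme $E^\circ$, whose complement has large codimension, the equivariant localization sequence \rref{sect:CHequ_loc} yields an isomorphism between the $G/H$-equivariant Chow groups of $E$ and of $E^\circ$ in the range considered. Hence flat pull-back along $E^\circ \to X/H$ is surjective there; since this projection factors as $E^\circ \xrightarrow{\rho} (X/H) \times (U/H) \xrightarrow{\mathrm{pr}_1} X/H$, the pull-back $\rho^*$, and therefore $\bar\varphi^*$, is surjective, which proves the lemma. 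The step one must get right — and the reason the argument is not a one-liner — is that homotopy invariance has to be invoked $G/H$-equivariantly: $(X \times U)/G$ is a quotient of the vector bundle $E$ by a finite group fixing the zero section, and ordinary homotopy invariance fails for such quotients (already $\CH(\Aa^2/\{\pm 1\})$ has nontrivial torsion in codimension one), so it is the equivariant theory that makes the computation work; tracking the $G/H$-structures through the cartesian square and the factorization of $E^\circ \to X/H$, and checking the codimension bound against the two uses of localization, is then routine.
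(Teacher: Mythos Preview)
Your proof is correct and takes essentially the same approach as the paper's: both build a $G/H$-equivariant vector bundle over $X/H$ by quotienting the trivial bundle with fibre a $G$-representation by the free $H$-action, then appeal to homotopy invariance and localisation. The only difference is packaging --- the paper unwinds the $G/H$-equivariant step by introducing a second representation (for $G/H$) and works throughout with ordinary Chow groups of the resulting double quotients, which is precisely what your invocation of equivariant homotopy invariance amounts to once one chooses a model for $\CH_{G/H}$.
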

\begin{proof}
Let $Q=G/H$ and $Y=X/H$. Let $W$ be a $G$-invariant open subscheme of a finite dimensional $G$-representation where $G$ acts freely, and $U$ a $Q$-invariant open subscheme of a finite dimensional $Q$-representation where $Q$ acts freely. 

The morphism $f \colon Y \times U \times (W/H) \to Y \times U$ is flat and $Q$-equivariant. Since $Q$ acts freely on its target, it follows  by faithfully flat descent and \rref{lemm:free_cartesian} that the morphism $f/Q$, which may be identified with  $(Y \times U \times W)/G \to (Y \times U)/G$, is flat. 

The morphism $X \times U \times W \to Y \times U \times W$ is flat (by \rref{prop:G_torsor}) and $G$-equivariant. Since $G$ acts freely on its target, it follows as above that the morphism $(X \times U \times W)/G \times (Y \times U \times W)/G$ is flat.

Since $W$ is a $G$-invariant open subscheme of a finite dimensional $G$-representation, and $H$ acts freely on $X \times U$, it follows by faithfully flat descent, \rref{lemm:free_cartesian} and \rref{prop:invariant_free} that $(X \times U \times W)/H$ is a $Q$-invariant open subscheme of a $Q$-equivariant vector bundle over $(X \times U)/H = Y \times U$. Since $Q$ acts freely on $Y \times U$, by the same argument $((X \times U \times W)/H)/Q = (X \times U \times W)/G$ is an open subscheme of a vector bundle over $(Y \times U)/Q =(X \times U)/G$. 

Finally, the morphism $(X \times U)/G \to (Y\times U)/G$ is an isomorphism, hence we have a commutative diagram of flat pull-backs
\[ \xymatrix{
\CH((X \times U)/G) \ar[r]  & \CH((X \times U \times W)/G)\\ 
\CH((Y \times U)/G) \ar[r] \ar[u]^{\simeq} & \CH((Y \times U \times W)/G) \ar[u]
}\]
where the upper horizontal morphism is surjective by homotopy invariance and the localisation sequence \cite[Propositions~1.9 and 1.8]{Ful-In-98}. Therefore the right vertical morphism is surjective. Its component of degree $n$ coincides with that of $\CH_G(Y) \to \CH_G(X)$, as soon as $U$ and $W$ are chosen so that their closed complements both have codimension $>\dim X -n$.
\end{proof}

\begin{noname}
\label{sect:equ_refined}
We now explain how to lift the construction of \cite[Definition~8.1.1]{Ful-In-98} to equivariant Chow groups, as this will be needed in \S\ref{sect:browder}. Let $f \colon X \to Y$ be a $G$-equivariant morphism with $Y$ smooth. Let $Y' \to Y$ and $X' \to X$ be $G$-equivariant morphisms. Associated with the cartesian square of $G$-equivariant morphisms
\[ \xymatrix{
X' \times_Y Y'\ar[r] \ar[d] & X' \times Y' \ar[d] \\ 
X \ar[r]^{\gamma_f} & X \times Y
}\]
is a morphism $(\gamma_f)^!\colon \CH_G(X'\times Y') \to \CH_G(X'\times_Y Y')$ such that the diagram
\[ \xymatrix{
\CH_G(X'\times Y')\ar[rr]^{(\gamma_f)^!} \ar[d] && \CH_G(X'\times_Y Y') \ar[d] \\ 
\CH(X'\times Y') \ar[rr]^{(\gamma_f)^!} && \CH(X' \times_Y Y')
}\]
commutes (see \cite[\S3.1.1]{Bro-St-03}). Given $x \in \CH_G(X')$ and $y \in \CH_G(Y')$, let $x \times_G y \in \CH_G(X'\times Y')$ be their exterior product \cite[Definition-Proposition 2]{EG-Equ}, and define
\[
x\cdot_f y = (\gamma_f)^!(x \times_G y) \in \CH_G(X'\times_Y Y').
\]
\end{noname}

\subsection{Purely transcendental extensions}
Let us record here a couple of well-known observations:
\begin{lemma}
\label{lemm:pt}
Let $W$ be a nonempty open subscheme of the affine space $\Aa^d$, and let $X$ be a variety. Then the flat pull-backs $\CH(X) \to \CH(X \times W)$ and $K_0'(X)_{(n)} \to K_0'(X \times W)_{(n+d)}$ (for every integer $n$) are bijective.
\end{lemma}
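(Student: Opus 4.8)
The strategy is to reduce both statements to the case $d=1$ by induction, and then to handle $d=1$ using the localisation sequence together with homotopy invariance of Chow groups and $K_0'$-theory. More precisely, for the Chow group, writing $W\subset \Aa^d$ as an open subscheme, I would first treat $W=\Aa^d$: the flat pull-back $\CH(X)\to\CH(X\times\Aa^d)$ is an isomorphism by homotopy invariance \cite[Theorem~3.3]{Ful-In-98}, applied $d$ times to the projections $X\times\Aa^{i}\to X\times\Aa^{i-1}$. For a general nonempty open $W\subset\Aa^d$, let $Z=\Aa^d\smallsetminus W$ with its reduced structure, so $Z$ has dimension $\le d-1$. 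The localisation sequence \cite[Proposition~1.8]{Ful-In-98} for the closed immersion $X\times Z\hookrightarrow X\times\Aa^d$ with open complement $X\times W$ reads
\[
\CH(X\times Z)\to \CH(X\times\Aa^d)\to \CH(X\times W)\to 0.
\]
Since $\CH(X\times\Aa^d)\to\CH(X\times W)$ is thus surjective and factors the pull-back from $\CH(X)$, surjectivity of $\CH(X)\to\CH(X\times W)$ follows at once. For injectivity, I would argue by noetherian induction on $d$ (or on $\dim Z$): one may shrink $W$ to assume $Z$ is a union of coordinate-style hypersurfaces, but more cleanly, cover $W$ by basic open subsets and use that a class dying in $\CH(X\times W)$ comes, via the localisation sequence, from $\CH(X\times Z')$ for $Z'$ of smaller dimension inside a hyperplane complement; the pull-back $\CH(X)\to\CH(X\times(\Aa^d\smallsetminus H))\cong\CH(X\times\Aa^{d-1})$ for a hyperplane $H$ being an isomorphism lets the induction close. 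The cleanest route is in fact: pull-back along the open immersion $X\times W\hookrightarrow X\times\Aa^d$ followed by pull-back along $X\times\Aa^d\to X$; one knows the composite-target map $\CH(X)\to\CH(X\times\Aa^d)$ is iso, and the restriction $\CH(X\times\Aa^d)\to\CH(X\times W)$ has kernel the image of $\CH(X\times Z)$, so it suffices to see that this image meets the image of $\CH(X)\to\CH(X\times\Aa^d)$ trivially, which follows because a cycle on $X\times\Aa^d$ pulled back from $X$ has every component dominating the $\Aa^d$-factor (so dimension $\ge d$ on each component meeting a fibre), while cycles from $X\times Z$ have all components inside $X\times Z$ — a dimension/support incompatibility that forces the intersection to be zero in $\CH$ after a short argument with the graded pieces.

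For the $K_0'$-statement the shape of the argument is identical, using the $K$-theoretic analogues. Homotopy invariance for $K_0'$ gives that $K_0'(X)\to K_0'(X\times\Aa^1)$ is an isomorphism, and it is compatible with the dimension filtration in the sense that it carries $K_0'(X)_{(n)}$ isomorphically onto $K_0'(X\times\Aa^1)_{(n+1)}$ — this is where the degree shift $n\mapsto n+d$ comes from, iterating $d$ times. Then for $W\subset\Aa^d$ open with closed complement $Z$ of dimension $\le d-1$, the localisation sequence in $K_0'$-theory \cite[VII]{sga6} (or Quillen's, but only the exactness at the relevant spot is needed)
\[
K_0'(X\times Z)\to K_0'(X\times\Aa^d)\to K_0'(X\times W)\to 0
\]
together with its compatibility with the filtration (the image of $K_0'(X\times Z)$ lands in $K_0'(X\times\Aa^d)_{(\dim X + d -1)}$, hence the induced map is surjective on the graded piece $K_0'(-)_{(n+d)}$ for the relevant range) gives surjectivity of $K_0'(X)_{(n)}\to K_0'(X\times W)_{(n+d)}$. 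Injectivity is again handled by induction on $\dim Z$: a class in the kernel lifts, by the localisation sequence, to $K_0'(X\times Z)$; replacing $Z$ by a hypersurface containing it and using homotopy invariance on the complementary $\Aa^{d-1}$, one reduces $\dim Z$ and concludes.

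The main obstacle, and the only place requiring genuine care rather than citation, is the bookkeeping of the \emph{dimension filtration} throughout: one must check that all the maps involved (flat pull-back along $X\times W\to X$, the localisation connecting map, and the homotopy-invariance isomorphism) interact with the subgroups $K_0'(-)_{(n)}$ exactly with the stated index shifts, and in particular that the localisation sequence remains exact after passing to an appropriate graded piece so that the surjectivity and the injectivity arguments both go through. For the Chow group this is automatic since $\CH(X)=\bigoplus_n \CH_n(X)$ is already graded by dimension and the relevant maps are homogeneous of the stated degree; for $K_0'$ one invokes the compatibility of the filtration with flat pull-back \cite[X, \S3.3]{sga6} and with proper push-forward \cite[X, Proposition~1.1.8]{sga6}, which were already recalled in the preliminaries. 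With that compatibility in hand, the two assertions follow by the uniform localisation-plus-homotopy-invariance scheme sketched above.
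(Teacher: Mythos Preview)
Your surjectivity argument (homotopy invariance plus the localisation sequence) is fine and coincides with the paper's. The gap is in injectivity.

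You claim it suffices that the image of $\CH(X\times Z)\to\CH(X\times\Aa^d)$ meet the image of $\CH(X)\to\CH(X\times\Aa^d)$ trivially. But the second map is an isomorphism, so its image is everything; what you actually need is that the push-forward from $\CH(X\times Z)$ is the \emph{zero map}. Your justification---that pull-backs from $X$ dominate the $\Aa^d$-factor while cycles from $X\times Z$ sit inside $X\times Z$---is a statement about cycles, not cycle classes: rational equivalence moves supports, so the fact that two cycle classes have representatives with disjoint supports says nothing about whether they coincide in $\CH$. Your alternative inductive sketches are likewise incomplete: $\Aa^d\smallsetminus H$ for a hyperplane $H$ is $\Aa^{d-1}\times\Gm$, not $\Aa^{d-1}$, so that step already presupposes the $d=1$ case you have not established. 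The same circularity affects the $K_0'$-argument, where you also assert without proof that the homotopy isomorphism $K_0'(X)\simeq K_0'(X\times\Aa^1)$ carries $K_0'(X)_{(n)}$ \emph{onto} $K_0'(X\times\Aa^1)_{(n+1)}$; this is not immediate from the definition of the topological filtration (a closed subscheme of $X\times\Aa^1$ of dimension $n+1$ need not be of the form $Z\times\Aa^1$).

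The paper sidesteps all of this by producing a \emph{section}. After a restriction--corestriction trick with two coprime finite extensions to ensure $k$ has enough elements, it finds a rational point of $W$, giving a regular closed immersion $i\colon X\to X\times W$. Then $i^*$ is a left inverse to the flat pull-back (by \cite[Lemma~55.7]{EKM} for $\CH$, \cite[\S7.2.5]{Qui-72} for $K_0'$), so surjectivity upgrades immediately to bijectivity. The filtration claim follows because $i^*K_0'(X\times W)_{(n+d)}\subset K_0'(X)_{(n)}$ by \cite[Theorem~83]{Gil-K-05}. This is both shorter and entirely avoids the bookkeeping you flag as the main obstacle.
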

\begin{proof}
We may find a nonconstant polynomial in $d$ variables with coefficients in $k$ whose nonvanishing locus $U$ is contained in $W$. Let $\delta$ be its degree. If the field $k$ is finite, it possesses two finite field extensions $k_0,k_1$ of coprime degree, each containing more than $\delta +1$ elements. By a restriction-corestriction argument, it will suffice to prove the statement after successively replacing $k$ with $k_0$ and $k_1$. Thus we may assume that $k$ contains $\delta +1$ distinct elements. Then one sees by induction on $m$ that any nonzero polynomial of degree $\leq \delta$ in $m$ variables with coefficients in $k$ induces a nonzero map $k^m \to k$. In particular $U$, and thus also $W$, possesses a rational point. Then by \cite[Lemma~55.7]{EKM}, resp. \cite[\S7.2.5]{Qui-72}, the pull-back along the corresponding regular closed immersion $i \colon X \to X \times W$ is a retraction of the flat pull-back $\CH(X) \to \CH(X \times W)$, resp.\ $K_0'(X) \to K_0'(X \times W)$. Since the latter is surjective by homotopy invariance and the localisation sequence (see \cite[Propositions~1.9 and 1.8]{Ful-In-98}, resp. \cite[Propositions~7.3.2 and 7.4.1]{Qui-72}), it is bijective. In addition, we have $i^*K_0'(X \times W)_{(n+d)} \subset K_0'(X)_{(n)}$ by \cite[Theorem~83]{Gil-K-05}, concluding the proof.
\end{proof}

\begin{lemma}
\label{lemm:pt_fil}
Let $K$ be a finitely generated purely transcendental field extension of $k$. Then for any variety $X$, the pull-back $K_0'(X) \to K_0'(X \times \Spec K)$ is an isomorphism of filtered groups.
\end{lemma}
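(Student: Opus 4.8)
The plan is to realise $\Spec K$ as a cofiltered limit of nonempty open subschemes of an affine space, and to deduce the statement from \rref{lemm:pt} by a continuity argument. Write $K = k(t_1,\dots,t_d)$ as the fraction field of the polynomial ring $R = k[t_1,\dots,t_d]$, so that $\Spec K = \varprojlim_f D(f)$, the limit being taken over the nonzero elements $f \in R$ ordered by divisibility, where $D(f) = \Spec R[1/f] \subset \Aa^d$; the transition morphisms $D(fg) \to D(f)$ are morphisms of affine schemes, and are flat, being localisations. Base-changing along the structure morphism of $X$ yields $X \times \Spec K = \varprojlim_f (X \times D(f))$, again with affine flat transition morphisms. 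By the standard spreading-out of coherent sheaves over such a limit (see \cite[IV, \S8]{ega-4-3}), together with the fact that the Grothendieck group commutes with filtered colimits of abelian categories, the flat pull-backs induce an isomorphism $\colim{f} K_0'(X \times D(f)) \simeq K_0'(X \times \Spec K)$, compatible with the pull-backs from $K_0'(X)$.

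Next I would bring in the dimension filtration. For each $f$, \rref{lemm:pt} applied to the open subscheme $D(f) \subset \Aa^d$ shows that $K_0'(X)_{(n)} \to K_0'(X \times D(f))_{(n+d)}$ is bijective for every $n$; since these bijections are compatible with the transition morphisms, the induced map $K_0'(X)_{(n)} \to \colim{f} K_0'(X \times D(f))_{(n+d)}$ is bijective as well. It then remains to check that, under the isomorphism of the previous paragraph, the image of $\colim{f} K_0'(X \times D(f))_{(n+d)}$ is precisely the filtration piece $K_0'(X \times \Spec K)_{(n)}$. One containment is straightforward: an integral closed subscheme $Z \subset X \times D(f)$ with $\dim Z \le n+d$ either fails to dominate $D(f)$, in which case it pulls back to the empty subscheme of $X \times \Spec K$, or dominates it, in which case its pull-back is its generic fibre, of dimension $\trdeg_k k(Z) - d = \dim Z - d \le n$ (the general case following by passing to irreducible components). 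For the reverse containment, an integral closed subscheme $Z \subset X \times \Spec K$ with $\dim Z \le n$ spreads out, by the same limit formalism, to a closed subscheme $\widetilde{Z} \subset X \times D(f)$ for some $f$, with generic fibre $Z$; after shrinking $D(f)$ we may assume $\widetilde{Z}$ is flat over $D(f)$, whence $\dim \widetilde{Z} = d + \dim Z \le n+d$, and the flat pull-back along $X \times \Spec K \to X \times D(f)$ sends $[\Oc_{\widetilde{Z}}]$ to $[\Oc_Z]$.

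Combining the two paragraphs, the pull-back $K_0'(X) \to K_0'(X \times \Spec K)$ restricts to a bijection $K_0'(X)_{(n)} \to K_0'(X \times \Spec K)_{(n)}$ for every $n$, which is the asserted isomorphism of filtered groups. The step demanding genuine care is the compatibility of the dimension filtration with the limit: the shift by $d$ present in \rref{lemm:pt} disappears here precisely because base change to $\Spec K$ preserves dimensions, and one must keep track of components failing to dominate $D(f)$; everything else is a formal consequence of \rref{lemm:pt} and the limit arguments of \cite[IV, \S8]{ega-4-3}. Alternatively, one may avoid working over a $d$-dimensional affine space by inducting on $d$, reducing to the case $K = k(t)$ of \rref{lemm:pt} with $d = 1$.
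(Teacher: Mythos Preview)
Your proposal is correct and follows essentially the same approach as the paper: realise $\Spec K$ as a filtered limit of nonempty open subschemes of $\Aa^d$, use continuity of $K_0'$ to reduce to \rref{lemm:pt}, and verify separately that the colimit of the shifted filtration pieces matches the filtration on $K_0'(X \times \Spec K)$. The only minor differences are in bookkeeping: the paper cites \cite[Proposition~7.2.2]{Qui-72} directly for the continuity step (rather than spreading-out of coherent sheaves plus Grothendieck-group-commutes-with-colimits), and for the surjectivity of the filtration it takes the scheme-theoretic closure $Z_\alpha$ of $Z$ and bounds $\dim Z_\alpha$ via the inequality $\trdeg(k(x)/k)\le \trdeg(k(y)/k)$ for a specialisation $x$ of $y\in Z$, thereby avoiding your shrinking-to-flat step.
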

\begin{proof}
Let $d = \trdeg(K/k)$. Then $E=\Spec K$ is the limit of a filtered family of nonempty open subschemes $W_\alpha$ of $\Aa^d$ with affine transition morphisms. Thus by \cite[Proposition~7.2.2]{Qui-72} the morphism 
\[
\colim{\alpha} K_0'(X \times W_\alpha) \to K_0'(X \times E)
\]
is bijective. Since each morphism $E \to W_\alpha$ is flat of relative dimension $-d$, the above morphism restricts for every $n$ to an injective morphism of subgroups
\begin{equation}
\label{eq:colim_fil}
\colim{\alpha} K_0'(X \times W_\alpha)_{(n+d)} \to K_0'(X \times E)_{(n)}.
\end{equation}
Let now $Z$ be a closed subscheme of $X \times E$ such that $\dim Z \leq n$. Consider the scheme theoretic closure $Z_\alpha$ of $Z$ in $X \times W_\alpha$. Any point of $x \in Z_\alpha$ is the specialisation of a point $y \in Z$, and 
\[
\trdeg(k(x)/k) \leq \trdeg(k(y)/k) = \trdeg(k(y)/K) + d \leq \dim Z +d.
\]
It follows that $\dim Z_\alpha \leq n+d$. We have a commutative diagram
\[ \xymatrix{
\colim{\alpha} K_0'(Z_\alpha) \ar[r] \ar[d] & K_0'(Z) \ar[d] \\ 
\colim{\alpha} K_0'(X \times W_\alpha) \ar[r] & K_0'(X \times E)}
\]
Since the upper horizontal morphism is surjective, we deduce that \eqref{eq:colim_fil} is surjective, and thus bijective. Now the statement follows from \rref{lemm:pt}.
\end{proof}

\subsection{An equivariant theory}
\label{sect:KG}
In this section, we introduce an equivariant theory $K_G$, for $G$ a finite algebraic group, which will be used exclusively in \S\ref{sect:small_dim}. Its key properties may be summarised as follows (when $X$ is a variety with a $G$-action):
\begin{enumerate}[label=(\roman*),ref=\roman*]
\item The forgetful morphism factors as $\KG{X}{G} \to K_G(X) \to K_0'(X)$. 
\item The morphism $\KG{X}{G} \to K_G(X)$ is surjective.
\item The morphism $K_G(X) \to K_0'(X)$ is bijective when $X=\Spec k$.
\item The group $K_G(X)$ is endowed with a filtration vanishing in negative degrees, which is compatible with the topological filtration of $K_0'(X)$ via the morphism $K_G(X) \to K_0'(X)$.
\end{enumerate}

\begin{remark}
The theory $K_G$ is constructed using the theory $K_0'$. One could perform the same construction using other cohomology theories instead; taking the Chow group would yield a quotient of the equivariant Chow group $\CH_G$.
\end{remark}

Let $G$ be a finite algebraic group. Let $V=H^0(G,\Oc_G)$ be the regular representation of $G$ over $k$. We view $E=\End(V)$ as a $G$-representation by letting $G$ act by left composition. Then the group $G$ acts freely on the nonempty $G$-invariant open subscheme $\GL(V)$ of $E$. The scheme
\[
T = \Spec  k(E)
\]
may be identified with the generic fiber of the morphism $E \to E/G$. Therefore $T$ is a variety over the field $F=k(E/G)=k(E)^G$, and is equipped with a free $G$-action.

\begin{definition}
When $X$ is a variety with a $G$-action, we may view the scheme $X \times T$ as an $F$-variety with a free $G$-action, and we define
\[
K_G(X) = K_0'( (X \times T)/G).
\]
A filtration is defined by letting $K_G(X)_{(n)} =  K_0'( (X \times T)/G)_{(n)}$.
\end{definition}

\begin{remark}
If $W$ is any generically free $G$-representation of finite dimension over $k$, replacing $T$ by $\Spec k(W)$ and $F$ by $k(W)^G$ in the above definition yields a canonically isomorphic filtered group. This observation will play no role in this paper, but may be used to construct a change of group morphism, for instance.
\end{remark}

We may use \rref{lemm:pt_fil} to define a morphism of filtered groups
\begin{equation}
\label{eq:K_G-K'}
K_G(X) = K_0'( (X \times T)/G) \to K_0'( X \times T) \simeq K_0'(X).
\end{equation}

Using the isomorphism induced by \rref{prop:invariant_free}, we construct a morphism
\begin{equation}
\label{eq:KG-K_G}
\KG{X}{G} \to \KG{X \times T}{G} \simeq K_0'( (X \times T)/G)=K_G(X),
\end{equation}
The composite of \eqref{eq:KG-K_G} and \eqref{eq:K_G-K'} is the canonical morphism $\KG{X}{G} \to K_0'(X)$.

\begin{lemma}
\label{lemm:KG-K_G}
The morphism \eqref{eq:KG-K_G} is surjective.
\end{lemma}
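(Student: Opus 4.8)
The morphism \eqref{eq:KG-K_G} is by construction the composite of the flat pull-back $p^{*}\colon\KG{X}{G}\to\KG{X\times T}{G}$ along the $G$-equivariant projection $p\colon X\times T\to X$ with the isomorphism $\KG{X\times T}{G}\simeq K_{0}'((X\times T)/G)$ afforded by \rref{prop:invariant_free} (valid because $G$ acts freely on $X\times T$). It therefore suffices to prove that $p^{*}$ is surjective. Since $T=\Spec k(E)$ is the generic point of $E$, it is also the generic point of the dense open subscheme $\GL(V)\subseteq E$, so $p$ factors $G$-equivariantly as $X\times T\to X\times\GL(V)\xrightarrow{\,q\,}X$ with $q$ the projection. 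I would split the argument accordingly into showing that $q^{*}$ is surjective, and that the pull-back $\KG{X\times\GL(V)}{G}\to\KG{X\times T}{G}$ is surjective.

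For the first point: $\GL(V)$ is the $G$-invariant open complement in $E$ of the zero locus of $\det$ (a semi-invariant, so this locus is closed and $G$-invariant). The projection $X\times E\to X$ is a $G$-equivariant vector bundle, so equivariant homotopy invariance of $K_{0}'(-;G)$ (as in \cite{Thomason-group}) gives an isomorphism $\KG{X}{G}\xrightarrow{\sim}\KG{X\times E}{G}$, while the equivariant localisation sequence for the $G$-invariant open immersion $X\times\GL(V)\hookrightarrow X\times E$ makes the restriction $\KG{X\times E}{G}\to\KG{X\times\GL(V)}{G}$ surjective; composing yields the surjectivity of $q^{*}$. This is the step I expect to carry the real weight: the $G$-action on $E$ fixes the origin, so one cannot reduce it to the non-equivariant statement, and it genuinely relies on the homotopy invariance and localisation properties of Thomason's equivariant $K'$-theory.

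For the second point, $G$ acts freely on $X\times\GL(V)$ and on $X\times T$ (by \rref{lemm:free_cartesian}, since it acts freely on $\GL(V)$), so \rref{prop:invariant_free} identifies, after passing to quotients, the pull-back $\KG{X\times\GL(V)}{G}\to\KG{X\times T}{G}$ with the flat pull-back $K_{0}'((X\times\GL(V))/G)\to K_{0}'((X\times T)/G)$ along $(X\times T)/G\to(X\times\GL(V))/G$. Now $(X\times T)/G$ is the fibre of $\pi\colon(X\times\GL(V))/G\to\GL(V)/G$ over the generic point of $\GL(V)/G$, hence the filtered limit of the open subschemes $\pi^{-1}(U)$ of $(X\times\GL(V))/G$, for $U$ a nonempty open of $\GL(V)/G$, with affine transition morphisms. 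By \cite[Proposition~7.2.2]{Qui-72} we get $K_{0}'((X\times T)/G)=\colim{U}K_{0}'(\pi^{-1}(U))$, and since each restriction $K_{0}'((X\times\GL(V))/G)\to K_{0}'(\pi^{-1}(U))$ is surjective by the localisation sequence \cite[Proposition~7.3.2]{Qui-72}, so is the map to the colimit. Combining this with the first point gives the surjectivity of $p^{*}$, hence of \eqref{eq:KG-K_G}. The remaining points are routine: that the identification of \rref{prop:invariant_free} is compatible with flat pull-back (descent), and that $(X\times T)/G$ is indeed the asserted generic fibre (a base-change computation for the $G$-torsor $T\to\Spec F$).
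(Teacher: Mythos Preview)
Your proof is correct and follows essentially the same route as the paper's: factor through $X\times\GL(V)$, use Thomason's equivariant homotopy invariance and localisation for the first step, and Quillen's continuity plus the non-equivariant localisation sequence (after passing to quotients via \rref{prop:invariant_free}) for the second. Your identification of $(X\times T)/G$ as the generic fibre of $\pi$ is exactly the base-change argument the paper packages via \rref{lemm:free_cartesian}.
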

\begin{proof}
This morphism may be factored as
\begin{equation}
\label{eq:factor}
\KG{X}{G} \to \KG{X \times \GL(V)}{G}  \simeq K_0'( (X \times \GL(V))/G) \to K_0'( (X \times T)/G).
\end{equation}
The first morphism is surjective by the equivariant versions of homotopy invariance and of the localisation sequence \cite[Theorems 2.7 and 4.1]{Thomason-group}, the middle isomorphism follows from \rref{prop:invariant_free}. The scheme $\Spec F=T/G$ is a filtered limit of open subschemes of $\GL(V)/G$ with affine transition morphisms, hence by base-change (in view of \rref{lemm:free_cartesian}) the scheme $(X \times T)/G$ is a filtered limit of open subschemes of $(X \times \GL(V))/G$ with affine transition morphisms. Thus the last morphism of \eqref{eq:factor} is surjective by \cite[Propositions 7.2.2 and 7.3.2]{Qui-72}.
\end{proof}

A flat, resp.\ proper, $G$-equivariant morphism $f\colon Y \to X$ induces a morphism $f^* \colon K_G(X) \to K_G(Y)$, resp.\ $f_* \colon K_G(Y) \to K_G(X)$. The Grothendieck ring $K_0(X;G)$ naturally acts on $K_G(X)$: any element $\alpha \in K_0(X;G)$ pulls back to an element of $K_0(X \times T;G)$, which acts on $\KG{X \times T}{G} = K_G(X)$. We denote this action by $\alpha \cap - \colon K_G(X) \to K_G(X)$.

\begin{lemma}
\label{lemm:loc-K_G}
Let $i \colon Y \to X$ be the immersion of a $G$-invariant closed subscheme, and $u\colon U \to X$ its open complement. Then the following sequence is exact:
\[
K_G(Y) \xrightarrow{i_*} K_G(X) \xrightarrow{u^*} K_G(U) \to 0.
\]
\end{lemma}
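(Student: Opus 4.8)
The plan is to reduce the assertion to the ordinary localisation sequence in $K_0'$-theory, applied to the noetherian separated scheme $(X\times T)/G$ together with the closed subscheme coming from $Y$. Recall that $K_G(-) = K_0'\big((-\times T)/G\big)$, and that $G$ acts freely on $X\times T$ (because it does on $T$). Since $Y$ is a $G$-invariant closed subscheme of $X$ with open complement $U$, the scheme $Y\times T$ is a $G$-invariant closed subscheme of $X\times T$ with open complement $U\times T$, and the $G$-actions on all three are free.

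Next I would pass to quotients. Using the cartesian squares furnished by \rref{lemm:free_cartesian} for the $G$-equivariant morphisms $Y\times T\to X\times T$ and $U\times T\to X\times T$ (applied to the finite-dimensional approximations of $T$ by open subschemes of $\GL(V)$ and then in the limit, exactly as in the proof of \rref{lemm:KG-K_G}), together with faithfully flat descent along the $G$-torsor $X\times T\to (X\times T)/G$, I would check that $(Y\times T)/G$ is a closed subscheme of $(X\times T)/G$ and that $(U\times T)/G$ is its open complement: being a closed, resp.\ open, immersion is fpqc-local on the target, and the two immersions are complementary because the torsor morphism $X\times T\to (X\times T)/G$ is open and surjective while $|Y\times T|$ and $|U\times T|$ partition $|X\times T|$. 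By the definitions of the functorial maps on $K_G$ recalled just before the statement, $i_*$ is then the $K_0'$-push-forward along the closed immersion $(Y\times T)/G\hookrightarrow(X\times T)/G$ and $u^*$ is the $K_0'$-restriction to the open subscheme $(U\times T)/G$, so the sequence in question is literally the localisation sequence in $K_0'$-theory for the triple $\big((Y\times T)/G,\ (X\times T)/G,\ (U\times T)/G\big)$, whose exactness is classical (cf.\ \cite[\S7]{Qui-72} and the references in \rref{lemm:pt}). That $(X\times T)/G$ is noetherian and separated — it is finite over $X\times T$, which is of finite type over the field $k(E)$ — is what makes this sequence available, and is already implicit in the definition of $K_G$.

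The only step requiring genuine care is the descent argument identifying $(Y\times T)/G$ and $(U\times T)/G$ as complementary closed and open subschemes of $(X\times T)/G$, and in particular the passage from the finite-dimensional levels $X\times W_\alpha$ to the limit $X\times T$; once this is in place, everything else is a matter of matching the definitions of $i_*$ and $u^*$ on $K_G$ with the classical localisation theorem for $K_0'$, and I do not anticipate any further obstacle.
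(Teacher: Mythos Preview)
Your approach is the same as the paper's: identify $(Y\times T)/G$ as a closed subscheme of $(X\times T)/G$ with open complement $(U\times T)/G$ via \rref{lemm:free_cartesian} and faithfully flat descent, then invoke the localisation sequence for $K_0'$ \cite[Proposition~7.3.2]{Qui-72}. Two minor corrections: the detour through finite-dimensional approximations is unnecessary, since $X\times T$ is already an $F$-variety with a free $G$-action and \rref{lemm:free_cartesian} applies to it directly; and $(X\times T)/G$ is not finite over $X\times T$ --- the quotient map goes the other way --- but it is a variety over $F$, hence noetherian and separated, which is all you need.
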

\begin{proof}
Since $(Y \times T)/G$ is a closed subscheme of $(X \times T)/G$ with open complement $(U \times T)/G$ by \rref{lemm:free_cartesian} and faithfully flat descent, the lemma follows from the localisation sequence for $K_0'$ (see \cite[Proposition 7.3.2]{Qui-72}).
\end{proof}

\section{The degree of equivariant cycles}
\numberwithin{theorem}{section}
\numberwithin{lemma}{section}
\numberwithin{proposition}{section}
\numberwithin{corollary}{section}
\numberwithin{example}{section}
\numberwithin{notation}{section}
\numberwithin{definition}{section}
\numberwithin{remark}{section}
\counterwithin{equation}{section}
\numberwithin{equation}{section}
\renewcommand{\theequation}{\thesection.{\alph{equation}}}

\begin{definition}
Let $G$ be an algebraic group acting on a variety $X$. We denote by $\Zo_G(X)$ the subgroup of $\Zo(X)$ generated by classes of equidimensional $G$-invariant closed subschemes of $X$.
\end{definition}

If $U$ is a $G$-invariant open subscheme of $X$, then the restriction morphism $\Zo(X) \to \Zo(U)$ maps the subgroup $\Zo_G(X)$ surjectively onto $\Zo_G(U)$.

\begin{lemma}
\label{lemm:Cartier_rep}
Let $i\colon D \to X$ be a $G$-equivariant principal effective Cartier divisor. Then the image of the composite $\Zo_G(X) \to \CH(X) \xrightarrow{i^*} \CH(D)$ is contained in the image of the morphism $\Zo_G(D) \to \CH(D)$.
\end{lemma}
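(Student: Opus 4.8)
The plan is to reduce, by linearity and the definition of $\Zo_G$, to the class $[Z]$ of a single $G$-invariant equidimensional closed subscheme $Z\subseteq X$, and then to produce an explicit $G$-invariant equidimensional closed subscheme of $D$ whose cycle class equals $i^*[Z]$ in $\CH(D)$.

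Set $d=\dim Z$, and use the hypothesis that $D$ is principal to fix a non-zero-divisor $f\in H^0(X,\Oc_X)$ with $D=V(f)$. Let $Z'\subseteq X$ be the scheme-theoretic image of the $G$-equivariant locally closed immersion $Z\setminus D\hookrightarrow X$; since scheme-theoretic images of $G$-equivariant morphisms are $G$-invariant, $Z'$ is a $G$-invariant closed subscheme. It is contained in $Z$, its support is the union of the irreducible components of $Z$ not contained in $D$ (so it is equidimensional of dimension $d$, or empty), and it agrees with $Z$ over the open subscheme $X\setminus D$; moreover $Z\setminus D$ is scheme-theoretically dense in $Z'$, so no associated point of $Z'$ lies on $D$ and the restriction $f'$ of $f$ to $Z'$ is a non-zero-divisor. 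Hence $Z'':=Z'\cap D=V(f')$ is an effective Cartier divisor on $Z'$; it is $G$-invariant, being the scheme-theoretic intersection of the $G$-invariant closed subschemes $Z'$ and $D$, and equidimensional of dimension $d-1$ by Krull's principal ideal theorem. Thus $[Z'']\in\Zo_G(D)$.

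It then remains to prove $i^*[Z]=[Z'']$ in $\CH(D)$. First, $i^*[Z]=i^*[Z']$: writing $[Z]=\sum_j m_j[Z_j]$ over the components of $Z$, the components not contained in $D$ appear with the same multiplicity in $[Z']$ (as $Z$ and $Z'$ coincide away from $D$), so $[Z]-[Z']$ is a cycle supported on $D$; and for integral $V\subseteq D$ one has $i^*[V]=c_1(\Oc_X(D)|_V)\cap[V]=0$ because $\Oc_X(D)\cong\Oc_X$. Second, since no component of $Z'$ is contained in $D$, the divisorial Gysin map sends $[Z']=\sum_j m_j[Z_j]$ to $\sum_j m_j[V(f|_{Z_j})]$, and additivity of intersection multiplicities for divisors over the components of $Z'$ identifies this with the cycle of the scheme-theoretic intersection $Z''=Z'\times_X D$ (compare \cite[\S1.5, \S2.3]{Ful-In-98}). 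Therefore $i^*[Z]$ is the image of $[Z'']\in\Zo_G(D)$, which finishes the argument.

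The hypothesis that $D$ be \emph{principal} is used precisely in the vanishing $i^*[V]=0$ for $V\subseteq D$: without it the components of $Z$ lying on $D$ would contribute terms involving $c_1(\Oc_X(D))$ that need not be represented by $G$-invariant cycles on $D$. Beyond this bookkeeping, the only technical point is the last identification — that the divisorial Gysin pull-back applied to the cycle of a possibly reducible and non-reduced subscheme meeting $D$ properly returns the cycle of the scheme-theoretic intersection. This is the additivity of the order/length functions for principal divisors, i.e.\ the standard commutative algebra of one-dimensional local rings, and is where I expect the bulk of the verification to lie.
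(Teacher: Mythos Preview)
Your proof is correct and follows essentially the same line as the paper's: your $Z'$ is precisely the paper's $Z_0$ (the scheme-theoretic closure of $Z\setminus D$), the step $i^*[Z]=i^*[Z']$ via triviality of $\Oc_X(D)$ is exactly the paper's use of $i^*\circ i_*=0$ from \cite[Proposition~2.6(c)]{Ful-In-98}, and the identification $i^*[Z']=[Z'\cap D]$ that you attribute to additivity of order functions is precisely \cite[Lemma~1.7.2]{Ful-In-98}, which the paper invokes directly.
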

\begin{proof}
Let $Z$ be an equidimensional $G$-invariant closed subscheme of $X$. Let $Z_0$ (resp.\ $Z_1$) be the scheme theoretic closure of $Z -(D \cap Z)$ (resp.\ of $Z-Z_0$) in $Z$. Then $Z_0$ is an equidimensional $G$-invariant closed subscheme of $X$, and we have $[Z]= [Z_0] + [Z_1]$ in $\Zo(X)$. In addition the closed subscheme $Z_1$ is supported inside $D$, hence we may find $y\in \Zo(D)$ such $[Z_1]=i_*(y)$ in $\Zo(X)$. Since $i$ is a principal effective Cartier divisor, it follows from \cite[Proposition 2.6 (c)]{Ful-In-98} that the composite $i^* \circ i_*$ is the zero endomorphism of $\CH(D)$, hence we have in $\CH(D)$
\[
i^*[Z] = i^*[Z_0] + i^*[Z_1] = i^*[Z_0] + i^* \circ i_*(y) = i^*[Z_0].
\]
By construction, no associated point of $Z_0$ lies in $D$, so that $D \cap Z_0 \to Z_0$ is an effective Cartier divisor. Then by  \cite[Lemma~1.7.2]{Ful-In-98} the cycle class $i^*[Z]=i^*[Z_0] \in \CH(D)$ is represented by the cycle $[Z_0 \cap D] \in \Zo(D)$, which belongs to $\Zo_G(D)$, since $Z_0 \cap D$ is an equidimensional $G$-invariant closed subscheme of $D$.
\end{proof}

We recall that $\CH_G$ denotes the equivariant Chow group, see \S\ref{sect:equiv_Chow}.

\begin{proposition}
\label{prop:equiv_forget}
Let $G$ be a trigonalisable (\S\ref{ex:trig}) algebraic group acting on a variety $X$. Then the two morphisms
\[
\CH_G(X) \to \CH(X) \quad \text{ and } \quad \Zo_G(X) \to \CH(X)
\]
have the same image.
\end{proposition}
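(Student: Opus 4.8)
The plan is to establish the two inclusions of images separately. The inclusion $\im(\Zo_G(X)\to\CH(X))\subseteq\im(\CH_G(X)\to\CH(X))$ is the easy one: if $Z\subseteq X$ is an equidimensional $G$-invariant closed subscheme and $W$ is a $G$-invariant open subscheme of a $G$-representation on which $G$ acts freely (used to compute $\CH_G$), then $(Z\times W)/G$ is a closed subscheme of $(X\times W)/G$, hence defines a class in $\CH_G(X)$. Unwinding the definition of the forgetful morphism --- flat pull-back along the $G$-torsor $\pi\colon X\times W\to(X\times W)/G$, followed by the inverse of the bijective flat pull-back $p_W^*\colon\CH(X)\to\CH(X\times W)$ along $p_W\colon X\times W\to X$ (bijective by \rref{lemm:pt}) --- one checks that this class is sent to $[Z]\in\CH(X)$, using that $\pi^{-1}((Z\times W)/G)=Z\times W=p_W^{-1}(Z)$ because $\pi$ is a $G$-torsor.

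For the reverse inclusion, I would first reduce to a statement about the spaces $X\times V$ for $V$ a $G$-representation. Let $\alpha\in\CH_G(X)$; it is represented by a cycle on $(X\times W)/G$ for some $G$-invariant open subscheme $W$ of a $G$-representation $V$ on which $G$ acts freely. The pull-back $\pi^*$ along the $G$-torsor $\pi$ sends the class of an integral closed subscheme $\bar Z$ of $(X\times W)/G$ to the class of $\pi^{-1}(\bar Z)$, an equidimensional $G$-invariant closed subscheme of $X\times W$; hence the image of $\alpha$ in $\CH(X\times W)$ lies in the image of $\Zo_G(X\times W)\to\CH(X\times W)$. Since the restriction $\Zo_G(X\times V)\to\Zo_G(X\times W)$ is surjective and compatible with the flat pull-backs from $X$ (which are bijective by \rref{lemm:pt}), it suffices to prove, for every $G$-representation $V$, that
\[
\im\bigl(\Zo_G(X\times V)\to\CH(X\times V)\xrightarrow{\ \sim\ }\CH(X)\bigr)\subseteq\im\bigl(\Zo_G(X)\to\CH(X)\bigr),
\]
where the middle map is the inverse of the flat pull-back along $p_V\colon X\times V\to X$.

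This I would prove by induction on $\dim V$, the case $V=0$ being trivial. Assume $\dim V\geq 1$. Since $G$ is trigonalisable, $V$ contains a subrepresentation $V'$ of codimension one; then $X\times V'$ is a $G$-invariant effective Cartier divisor in $X\times V$, cut out by (the pull-back to $X\times V$ of) a linear form on $V$ spanning the line $(V/V')^*\subseteq V^*$. In particular $i\colon X\times V'\hookrightarrow X\times V$ is a $G$-equivariant principal effective Cartier divisor, so \rref{lemm:Cartier_rep} applies to it. Now let $c$ lie in the image of $\Zo_G(X\times V)\to\CH(X\times V)$, and set $a=(p_V^*)^{-1}(c)\in\CH(X)$, so $c=p_V^*(a)$. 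By \rref{lemm:Cartier_rep}, $i^*(c)$ lies in the image of $\Zo_G(X\times V')\to\CH(X\times V')$; applying the induction hypothesis to the $G$-representation $V'$ (trigonalisability of $G$ propagates to all its representations), we get $(p_{V'}^*)^{-1}(i^*(c))\in\im(\Zo_G(X)\to\CH(X))$. Finally $p_V\circ i=p_{V'}$ yields $i^*(c)=i^*p_V^*(a)=p_{V'}^*(a)$, hence $(p_{V'}^*)^{-1}(i^*(c))=a$, and therefore $a\in\im(\Zo_G(X)\to\CH(X))$, as desired.

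I expect the points requiring care to be twofold. First, one must match the abstract definition of the forgetful morphism $\CH_G(X)\to\CH(X)$ (through the isomorphism of \rref{lemm:pt}) with the concrete description as $(p_W^*)^{-1}\circ\pi^*$, which is what makes the image of $\CH_G(X)$ visibly representable by $G$-invariant cycles on $X\times W$. Second, one must verify that $X\times V'\subseteq X\times V$ is a principal \emph{equivariant} divisor: this is precisely where trigonalisability enters, as it is what guarantees the codimension-one subrepresentation $V'$ (equivalently, the semi-invariant linear form cutting it out), allowing \rref{lemm:Cartier_rep} to be fed into the induction. The remaining steps are routine manipulations with flat pull-backs along open subschemes of affine spaces, all of which induce isomorphisms on Chow groups by \rref{lemm:pt}.
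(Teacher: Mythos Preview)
Your proof is correct and follows essentially the same route as the paper's: both reduce to showing that $\im(\Zo_G(X\times V)\to\CH(X\times V))$ equals $\im(\Zo_G(X)\to\CH(X\times V))$ for every finite-dimensional $G$-representation $V$, and prove this by induction on $\dim V$ using a codimension-one subrepresentation $V'$ (granted by trigonalisability) together with \rref{lemm:Cartier_rep}. The only cosmetic difference is that you spell out the easy inclusion $\im(\Zo_G(X)\to\CH(X))\subseteq\im(\CH_G(X)\to\CH(X))$ separately, whereas the paper folds both inclusions into a single commutative diagram relating $\Zo((X\times W)/G)$, $\Zo_G(X\times W)$, $\Zo_G(X\times V)$, and $\Zo_G(X)$.
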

\begin{proof}
Let $W$ be a nonempty $G$-invariant open subscheme of a finite-dimensional $G$-representation $V$ over $k$, and assume that $G$ acts freely on $W$. Consider the commutative diagram (see \rref{lemm:pt} for the indicated isomorphisms)
\[ \xymatrix{
\CH( (X \times W)/G) \ar[r] & \CH(X \times W) & \CH(X\times V) \ar[l]_{\simeq} &\CH(X) \ar[l]_{\simeq} \\ 
\Zo( (X \times W)/G) \ar[r] \ar@{->>}[u]& \Zo_G(X \times W)\ar[u]& \Zo_G(X\times V) \ar[u] \ar@{->>}[l] &\Zo_G(X) \ar[l] \ar[u]
}\]
(If $G$ is not finite, then $(X \times W)/G$ may be only an algebraic space.) To prove the statement, it will suffice to prove that for every finite-dimensional $G$-representation $V$ over $k$, the two morphisms
\[
\Zo_G(X \times V) \to \CH(X \times V) \text{ and } \Zo_G(X) \to \CH(X \times V)
\]
have the same image. We proceed by induction on $\dim V$, the case $V=0$ being clear. Assume that $V \neq 0$. Since $G$ is trigonalisable, we can find a subrepresentation $V' \subset V$ of codimension one. Let $i\colon X \times V' \to X \times V$ be the induced $G$-equivariant closed immersion. By \rref{lemm:Cartier_rep}, for any $x\in \Zo_G(X \times V)$, the cycle class $i^*(x) \in \CH(X \times V')$ is represented by a cycle in $\Zo_G(X \times V')$.

By induction, we may find a cycle $y \in \Zo_G(X)$ such that $i^*(x) = y \times [V'] \in \CH(X \times V')$. Using the surjectivity of $\CH(X) \to \CH(X \times V)$, we find $z \in \CH(X)$ such that $x = z \times [V] \in \CH(X \times V)$. Then $z \times [V'] = i^*(x) = y \times [V'] \in \CH(X \times V')$ (see \cite[Lemma~55.7]{EKM}), hence $z = y \in \CH(X)$ by injectivity of $\CH(X) \to \CH(X \times V')$, and finally $x = y \times [V] \in \CH(X \times V)$, as required.
\end{proof}

\begin{theorem}
\label{th:eq_deg}
Let $G$ be a trigonalisable (\S\ref{ex:trig}) algebraic $p$-group acting on a projective variety $X$. Assume that the degree of every closed point of $X^G$ is divisible by $p$. Then the morphism $d_X \colon \CH_G(X) \to \CH(X) \xrightarrow{\deg} \Fp$ is zero.
\end{theorem}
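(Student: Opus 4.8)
The plan is to reduce, via \rref{prop:equiv_forget}, to a divisibility statement for zero-dimensional schemes, and then to induct on the order of $G$.

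Since $G$ is trigonalisable, \rref{prop:equiv_forget} identifies the image of the forgetful morphism $\CH_G(X)\to\CH(X)$ with the image of $\Zo_G(X)\to\CH(X)$. As $d_X$ is the composite of $\CH_G(X)\to\CH(X)$ with $\deg\colon\CH(X)\to\Fp$, it vanishes exactly when $\deg[Z]\equiv 0\pmod p$ for every equidimensional $G$-invariant closed subscheme $Z\subseteq X$. Since $X$ is projective, such a $Z$ contributes nothing unless $\dim Z=0$, in which case $\deg[Z]=\dim_k\Oc(Z)$. So it suffices to prove: \emph{if $Z$ is a finite $k$-scheme with a $G$-action such that every closed point of $Z^G$ has residue field of degree divisible by $p$ over $k$, then $p\mid\dim_k\Oc(Z)$} (applying this to the zero-dimensional $G$-invariant closed subschemes of a general $X$ then gives the theorem, since the closed points of their fixed loci lie among those of $X^G$).

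I would prove this by induction on $|G|$. The case $|G|=1$ is clear, since then $Z^G=Z$ and $\dim_k\Oc(Z)=\sum_z\dim_k\Oc_{Z,z}$ is a sum of multiples of $p$. For the inductive step, first split off the $G$-invariant open subscheme formed by the connected components of $Z$ meeting $Z^G$: on each such component, the unique point is the closed point of the fixed locus, so its residue field has degree divisible by $p$, and that degree divides the $k$-dimension of the component. This reduces us to the case $Z^G=\varnothing$. Choose a normal subgroup $N\subseteq G$ of order $p$ (one exists in any nontrivial trigonalisable $p$-group). If $N$ acts freely on $Z$ --- in particular if $Z^N=\varnothing$, by \rref{lemm:nofix_free} --- then $Z\to Z/N$ is finite flat of degree $p$ by \rref{prop:G_torsor}, whence $p\mid\dim_k\Oc(Z)$. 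Otherwise $Z^N\neq\varnothing$; write $Z=U\sqcup Z'$ with $U$ the (open, $G$-invariant, since $N$ is normal) complement of the support of $Z^N$ and $Z'$ the open subscheme of $Z$ with that support, so that $Z^N\subseteq Z'$. Then $N$ acts freely on $U$, so $p\mid\dim_k\Oc(U)$ as before; and $G/N$ acts on $Z^N$ with $(Z^N)^{G/N}=Z^G=\varnothing$, so the induction hypothesis yields $p\mid\dim_k\Oc(Z^N)$.

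It remains to show that $p$ divides $\dim_k\Oc(Z')-\dim_k\Oc(Z^N)=\dim_k\ker\big(\Oc(Z')\to\Oc(Z^N)\big)$; this is the crux. Once $Z^G=\varnothing$ it cannot be read off from residue fields (a connected scheme may have empty fixed locus --- witness $\alpha_p$ acting on $\Spec k[t]/(t^p)$ by translation), and one must analyse the $N$-action on the ideal of $Z^N$ inside $\Oc(Z')$: when $N\cong\mup$ this ideal is generated, in the induced $\Zz/p$-grading of $\Oc(Z')$, by the parts of nonzero degree, while a unipotent $N$ forces Frobenius and $p$-th power considerations. I expect the bookkeeping needed for infinitesimal group schemes in characteristic $p$ to be where the real work lies. (In characteristic $\neq p$ this is much softer: a trigonalisable $p$-group is then diagonalisable, hence a constant abelian $p$-group after base change to $\bar k$, and since $Z^G=\varnothing$ forces $G$ to fix no connected component of $Z_{\bar k}$, the number of those components --- a divisor of $\dim_k\Oc(Z)$ --- is divisible by $p$.)
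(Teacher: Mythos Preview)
Your reduction via \rref{prop:equiv_forget} to the statement that $p\mid\dim_k\Oc(Z)$ for every zero-dimensional $G$-invariant closed subscheme $Z$ with $Z^G=\varnothing$ is correct and matches the paper's first step. But your induction on $|G|$ stalls exactly where you admit it does: you never establish that $p$ divides $\dim_k\Oc(Z')-\dim_k\Oc(Z^N)$, and the hints you offer (the $\Zz/p$-grading when $N\cong\mu_p$, Frobenius considerations for unipotent $N$) do not complete the argument. The ideal $I=\ker(\Oc(Z')\to\Oc(Z^N))$ is $G$-stable, but nothing in your setup forces $\dim_k I\equiv 0\pmod p$; for $N=\mu_p$, the ideal $I$ is generated by the pieces of nonzero degree, yet their products may contribute to $I\cap A_0$, so $\dim_k I$ is not simply $\sum_{i\neq 0}\dim_k A_i$, and no divisibility is visible. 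Your induction hypothesis controls $\dim_k\Oc(Z^N)$ but says nothing about the passage from $Z^N$ to its infinitesimal thickening $Z'$. (Your parenthetical argument in characteristic $\neq p$ is essentially sound once rephrased---each $G$-orbit of connected components of $Z_{\bar k}$ has $p$-power cardinality $>1$, and all components in an orbit have the same length---but this does not extend to characteristic $p$, which is precisely the case you leave open.)

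The paper avoids this obstacle by a different induction. After reducing to $\dim X=0$ and $X^G=\varnothing$, it base-changes to $\bar k$ and runs noetherian induction on the poset of $G$-invariant closed subschemes: since in dimension zero an open complement is also closed, the localisation sequence splits and $\im d_X=\im d_W+\im d_{X-W}$ for any $G$-invariant closed $W\subset X$. This reduces to the case where $X$ has only $\varnothing$ and $X$ as $G$-invariant closed subschemes. A rational point then yields a scheme-theoretically dominant $G$-equivariant morphism $G\to X$, whence $X\simeq G/H$ for some subgroup $H\subsetneq G$ by \cite[V, Th\'eor\`eme 10.1.2]{SGA3-1}, so $\dim_k\Oc(X)$ is a nontrivial divisor of $|G|$ and hence a multiple of $p$. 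The crucial difference: the paper never tries to compare a scheme with its fixed locus under a subgroup, but instead reduces directly to homogeneous spaces, where the divisibility is immediate.
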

\begin{proof}
By \rref{prop:equiv_forget}, it will suffice to prove that $\deg [T] =0 \in \Fp$ for any equidimensional $G$-invariant closed subscheme $T$ of $X$. It will of course suffice to consider the case $\dim T=0$. Replacing $X$ with $T$, we may assume that $\dim X=0$. Given a $G$-invariant closed subscheme $Z$ of $X$, consider the exact sequence of \S\ref{sect:CHequ_loc}
\[
\CH_G(Z) \xrightarrow{i_*} \CH_G(X) \xrightarrow{u^*} \CH_G(X - Z) \to 0,
\]
where $i \colon Z \to X$, resp.\ $u \colon X-Z \to X$, is the closed, resp.\ open,  immersion. Since $\dim X =0$, the open immersion $u$ is also closed, and the push-forward morphism $u_*$ is a section of $u^*$. In addition $d_X \circ i_* = d_Z$ and $d_X \circ u_* = d_{X -Z}$, so that
\begin{equation}
\label{eq:dx}
\im d_X = \im d_Z + \im d_{X-Z} \subset \Fp.
\end{equation}

Taking $Z=X^G$, we see that in order to prove the proposition, it will suffice to assume that $X^G = \varnothing$ and prove that $d_X = 0$. To do so, we may assume that $k$ is algebraically closed. Using \eqref{eq:dx} and noetherian induction, we may assume that $X$ possesses exactly two $G$-invariant closed subschemes ($\varnothing$ and $X$). It will thus suffice to prove that $\deg [X] =0 \in \Fp$. The choice of a rational point of $X$ induces a $G$-equivariant morphism $G \to X$, which must be scheme theoretically dominant. By \cite[V, Th\'eor\`eme 10.1.2]{SGA3-1}, there is a subgroup $H$ of $G$ such that $X$ is isomorphic to $G/H$. The morphism $G \to G/H$ is flat and finite by \rref{prop:G_torsor}, so that the integer $n=\dim_k H^0(X,\Oc_X)$ divides the order of $G$, and is thus a $p$-th power. Since $H \neq G$ (as $X^G = \varnothing$ and $X \neq \varnothing$), it follows that $\deg [X] = n\mod p$ vanishes in $\Fp$.
\end{proof}

\begin{example}
\label{ex:nonabelian}
Assume that the field $k$ is algebraically closed of characteristic $\neq p$, and let $G$ be a constant nonabelian finite $p$-group. Then $G$ admits a simple representation $V$ of dimension $d=p^n$ with $n >0$. Observe that the case $n=1$ actually occurs, when $G$ has order $p^3$. 

The group $G$ acts without fixed point on $\Pp(V)$, but the cycle class $c_1(\Oc(1))^{d-1}$ is in the image of $\CH_G(\Pp(V)) \to \CH(\Pp(V))$ and has degree prime to $p$ (equal to $1$). When $n=1$, we have $\dim \Pp(V) = p-1$.

The group $G$ also acts on $\Pp(V \oplus 1)$ with a single fixed point $\Pp(1)$. Let $f\colon X \to \Pp(V \oplus 1)$ be the blow-up at this point. There is a $G$-action on $X$ making $f$ a $G$-equivariant morphism. Then any fixed point of $X$ must be contained in the exceptional divisor $E$, which is $G$-equivariantly isomorphic to $\Pp(V)$. Since the latter has no fixed point, it follows that $X(k)^G=\varnothing$. The normal bundle of the effective Cartier divisor $j \colon \Pp(V) \simeq E \to X$ is $\Oc_{\Pp(V)}(-1)$, hence the zero-cycle class
\[
[E]^d = j_* \circ c_1(\Oc_{\Pp(V)}(-1))^{d-1}[\Pp(V)] \in \CH(X)
\]
has degree $(-1)^{d-1}$. The zero-cycle class on $\Pp(V \oplus 1)$
\[
\big(c_1(\Tan_{\Pp(V \oplus 1)})\big)^d = \big((d+1) \cdot c_1(\Oc_{\Pp(V \oplus 1)}(1))\big)^d \in \CH(\Pp(V \oplus 1))
\]
has degree $(d+1)^d$. By \cite[15.4.3]{Ful-In-98}, we have
\[
c_1(\Tan_X) = f^*c_1(\Tan_{\Pp(V \oplus 1)}) + (1-d)[E] \in \CH^1(X),
\]
so that $\deg (c_1(\Tan_X)^d) = 2 \mod p$. Thus we have found a Chern number of $X$ which is not divisible by $p$, when $p \neq 2$. When $n=1$, we have $\dim X = p$.

Now assume that $p=2$. The group $G$ acts on $P=\Pp(V \oplus 1) \times \Pp(V \oplus 1)$ with a single fixed point $\Pp(1) \times \Pp(1)$. Let $g \colon Y \to P$ be the blow-up at this point. As above, we have $Y(k)^G = \varnothing$, and by \cite[Example~15.4.2 (c)]{Ful-In-98}
\[
c_2(\Tan_Y) = g^*c_2(\Tan_P) \in  \CH^2(Y)/2,
\]
so that we have in $\mathbb{F}_2$
\[
\deg (c_2(\Tan_Y)^d) = \deg (c_2(\Tan_P)^d) = (\deg (c_1(\Tan_{\Pp(V \oplus 1)}))^d)^2 =1 \mod 2.
\]
Therefore one of the Chern numbers of $Y$ is odd. When $n=1$, we have $\dim Y =4$.
\end{example}

\section{The Euler characteristic of equivariant modules}
\numberwithin{theorem}{subsection}
\numberwithin{lemma}{subsection}
\numberwithin{proposition}{subsection}
\numberwithin{corollary}{subsection}
\numberwithin{example}{subsection}
\numberwithin{notation}{subsection}
\numberwithin{definition}{subsection}
\numberwithin{remark}{subsection}
\counterwithin{equation}{subsection}
\numberwithin{equation}{subsection}
\renewcommand{\theequation}{\thesubsection.{\alph{equation}}}

We now investigate divisibility properties of Euler characteristics of equivariant modules on a variety upon which a finite $p$-group acts without fixed points. The next proposition shows that the case of a free action can be easily handled. 
\begin{proposition}
\label{prop:free}
Let $G$ be a finite algebraic group acting freely on a projective variety $X$. Then 
the Euler characteristic $\chi(X,\Fc)$ of any $G$-equivariant coherent $\Oc_X$-module $\Fc$ is divisible by the order of $G$.
\end{proposition}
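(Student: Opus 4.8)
The plan is to reduce the computation of $\chi(X,\Fc)$ to a computation downstairs on the quotient $Y = X/G$, using \rref{prop:invariant_free}. Since $G$ acts freely, \rref{prop:G_torsor} tells us that $\varphi \colon X \to Y$ is finite flat of degree $n = $ order of $G$, and $Y$ is again a projective variety. By \rref{prop:invariant_free}, the $G$-equivariant coherent $\Oc_X$-module $\Fc$ corresponds to a $G$-equivariant coherent $\Oc_Y$-module $\Gc = (\varphi_*\Fc)^G$ in $\Mod{1}{Y}$ (i.e.\ an ordinary coherent $\Oc_Y$-module, since here $G/G$ is trivial), with $\Fc \simeq \varphi^*\Gc$ equivariantly.

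The key point is then that $\varphi_* \varphi^* \Gc \simeq \Gc \otimes_{\Oc_Y} \varphi_*\Oc_X$ by the projection formula ($\varphi$ being affine, in fact finite flat), and that $\varphi_*\Oc_X$ is a locally free $\Oc_Y$-module of rank $n$. Since $\varphi$ is finite (hence affine), $H^i(X,\Fc) = H^i(Y,\varphi_*\Fc) = H^i(Y, \varphi_*\varphi^*\Gc)$ for all $i$, so that
\[
\chi(X,\Fc) = \chi(Y, \Gc \otimes_{\Oc_Y} \varphi_*\Oc_X).
\]
Now I would argue that for a coherent $\Oc_Y$-module $\Gc$ and a locally free $\Oc_Y$-module $\Ec$ of constant rank $n$, the Euler characteristic $\chi(Y, \Gc \otimes \Ec)$ is divisible by $n$ — or more precisely, that it equals $n$ times $\chi(Y,\Gc)$ modulo a correction that is still divisible by $n$. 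The cleanest way is to work in $K_0'(Y)$: the class $[\varphi_*\Oc_X] \in K_0(Y)$ has rank $n$, so $[\varphi_*\Oc_X] - n$ lies in the ideal $K_0(Y)_{(<\dim Y)}$ generated by classes of bundles of rank $0$, i.e.\ it is nilpotent / lies in the kernel of the rank map; tensoring and pushing to $\Zz$ via $\chi(Y,-)$, one sees $\chi(Y,\Gc\otimes\varphi_*\Oc_X) \equiv n\,\chi(Y,\Gc) \pmod{?}$. Actually the sharper and safer route is: filter $\Gc$ by coherent subsheaves so that on a dense open $\varphi_*\Oc_X$ is trivial of rank $n$, use the localization sequence, and reduce to the case where $\Gc \otimes \varphi_*\Oc_X$ differs from $\Gc^{\oplus n}$ only on a proper closed subscheme — then induct on $\dim \operatorname{Supp}\Gc$. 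So the genuine induction is on $\dim X$: the statement on $X$ reduces (via $\varphi_*\Oc_X$ being generically free) to the same statement on closed subschemes of $Y$ of smaller dimension, which carry induced free $G$-actions after base change, or directly to divisibility of $n\chi(Y,\Gc)$.

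The step I expect to be the main obstacle is making precise the claim that $\chi(Y, \Gc \otimes \Ec)$ is divisible by $\operatorname{rank}\Ec = n$ for an arbitrary coherent $\Gc$ — locally $\Ec \simeq \Oc_Y^{\oplus n}$ but this is not global, so one cannot simply write $\Gc \otimes \Ec \simeq \Gc^{\oplus n}$. I would handle this by Noetherian induction on $Y$ (or on $\dim \operatorname{Supp}\Gc$): choose a dense open $U \subset Y$ over which $\Ec|_U$ is free, so that $(\Gc\otimes\Ec)|_U \simeq \Gc|_U^{\oplus n}$; letting $Z$ be the complementary closed subscheme with the reduced-structure and $\Gc'$ the kernel/image pieces of $\Gc$ supported on $Z$, the localization sequence for $K_0'$ gives $\chi(Y,\Gc\otimes\Ec) = n\,\chi(U,\Gc|_U\text{-part}) + \chi(Z, (\text{torsion})\otimes\Ec|_Z)$, and the second term is divisible by $n$ by the inductive hypothesis applied on $Z$ (with $\Ec|_Z$ still locally free of rank $n$). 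The base case $\dim = 0$ is immediate since then $\Ec$ is genuinely free. This closes the induction and yields $n \mid \chi(X,\Fc)$.
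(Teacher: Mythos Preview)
Your reduction to $Y=X/G$ via \rref{prop:invariant_free} and the identity $\chi(X,\Fc)=\chi(Y,\Gc\otimes\varphi_*\Oc_X)$ are correct, and this is exactly how the paper begins. But the next step --- the claim that $\chi(Y,\Gc\otimes\Ec)$ is divisible by $\rank\Ec$ for an \emph{arbitrary} locally free $\Ec$ --- is false, and your Noetherian induction does not prove it. Counterexample: $Y=\Pp^1$, $\Gc=\Oc_Y$, $\Ec=\Oc\oplus\Oc(1)$; then $\rank\Ec=2$ while $\chi(Y,\Gc\otimes\Ec)=3$. Tracing your induction in this example: $[\Gc\otimes\Ec]-2[\Gc]=[\Oc(1)]-[\Oc]=[\Oc_{\{pt\}}]$ in $K_0'(\Pp^1)$, and $\chi$ of this class on the closed point is $1$, not divisible by $2$. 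The flaw is that the class you obtain on $Z$ by the localisation sequence is \emph{not} of the form $[\text{something}]\cdot[\Ec|_Z]$, so the inductive hypothesis (which concerns tensoring with a rank-$n$ bundle) does not apply to it. Your earlier remark that $[\varphi_*\Oc_X]-n$ lies in the kernel of the rank map suffers from the same defect: that kernel does not map to $n\Zz$ under $\chi$.

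What is missing is precisely the torsor structure. The paper simply cites \cite[\S12, Theorem~2]{Mumford-AV} for the identity $\chi(X,\varphi^*\Gc)=n\cdot\chi(Y,\Gc)$; Mumford's proof is not a rank argument but a Hilbert-polynomial/intersection-theoretic one that genuinely uses that $X\to Y$ is a quotient by a free action (equivalently, that $\varphi^*\varphi_*\Oc_X\simeq\Oc_X^{\oplus n}$ via the cartesian square of \rref{prop:G_torsor}, not merely that $\varphi_*\Oc_X$ has rank $n$). If you want to repair your argument without quoting Mumford, you must bring this extra structure in --- for instance by comparing Hilbert polynomials with respect to an ample $\Lc$ on $Y$ and its pullback $\varphi^*\Lc$ on $X$, where the torsor condition controls the leading terms.
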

\begin{proof}
By \rref{prop:invariant_free} we may find a coherent $\Oc_{X/G}$-module $\Gc$ whose pull-back to $X$ is isomorphic to $\Fc$. Then $\chi(X,\Fc) = n \cdot \chi(X/G,\Gc)$, where $n$ is the order of $G$, see \cite[\S12, Theorem 2]{Mumford-AV}.
\end{proof}

\subsection{Dual cyclic groups}
\begin{theorem}
\label{th:cyclic}
Let $p$ be a prime, and $G=\mu_{p^r}$ for some integer $r$. Let $X$ be a projective variety with a $G$-action such that $X^G =\varnothing$. Then the Euler characteristic $\chi(X,\Fc)$ of any $G$-equivariant coherent $\Oc_X$-module $\Fc$ is divisible by $p$.
\end{theorem}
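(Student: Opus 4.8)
The plan is to reduce the statement to the free-action case, which is already handled by Proposition~\ref{prop:free}, by a dévissage over the closed subscheme where the action of the cyclic subgroup $\mu_p \subset G$ fails to be free. Write $H = \mu_p$, the unique subgroup of $G$ of order $p$; since $X^G = \varnothing$ we have $X^H = \varnothing$ as well (a fixed point of $G$ would certainly be a fixed point of $H$... wait, the reverse: $X^G \supset X^H$ is false; rather $X^H$ could be larger). Let me instead argue on $H$ directly: the key point is that \emph{if} $H$ acted freely on $X$, then $p$ divides $\chi(X,\Fc)$ for every $G$-equivariant $\Fc$ by restriction of equivariance and Proposition~\ref{prop:free}. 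So the obstruction is concentrated on the closed subscheme $X^H$. By Lemma~\ref{lemm:nofix_free} applied to $H$ (which has prime order), $H$ acts freely on the open complement $U = X \setminus X^H$, hence $p \mid \chi(U',\Fc|_{U'})$ in a suitable sense; the real content is to analyze $X^H$, which carries an induced action of the quotient $G/H = \mu_{p^{r-1}}$ together with the residual $H$-action that is now trivial.

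The central step is therefore an induction on $r$, combined with a localization/additivity argument for Euler characteristics. Concretely: let $Z = X^H$ with closed immersion $i\colon Z \to X$ and open complement $j\colon U \to X$. For a $G$-equivariant coherent module $\Fc$, I would filter $\Fc$ by powers of the ideal sheaf $\Ic$ of $Z$; each graded piece $\Ic^n\Fc/\Ic^{n+1}\Fc$ is a $G$-equivariant coherent $\Oc_Z$-module, and $\chi(X,\Fc) = \chi(X, \Gc) + \sum_n \chi(Z, \Ic^n\Fc/\Ic^{n+1}\Fc)$ where $\Gc$ is supported scheme-theoretically on $U$ after removing the $Z$-part — more cleanly, using the exact sequence $0 \to \Fc_Z \to \Fc \to \Fc_U \to 0$ where $\Fc_Z$ is the subsheaf supported on $Z$ and $\Fc_U = j_*j^*\Fc$ modulo torsion; additivity of $\chi$ on short exact sequences of equivariant modules then splits the computation. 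The term from $\Fc_U$ is divisible by $p$ because $H$ acts freely near $U$ — here one must be careful, since $U$ is not projective; the fix is to note $\chi(X,\Fc_U)$ only sees a neighborhood of $U$ and to invoke Proposition~\ref{prop:free} after a projective push-forward, or better, to observe that $\Fc/\Fc_Z$ extends to a $G$-equivariant sheaf whose $H$-action is free in codimension controlled by $Z$ and to run an induction on $\dim X$. The term from $\Fc_Z$ is a sum of Euler characteristics of $(G/H)$-equivariant modules on $Z = X^H$, and since $Z^{G/H} \subset X^G = \varnothing$, the inductive hypothesis (the theorem for $\mu_{p^{r-1}}$) gives divisibility by $p$.

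The base case $r = 0$ is trivial ($G$ is the trivial group but then $X^G = X = \varnothing$, so there is nothing to prove), and $r=1$, $G = \mu_p$, is where the geometry bites: here $G = H$, so $Z = X^G = \varnothing$ by hypothesis, meaning $G$ acts freely on all of $X$ by Lemma~\ref{lemm:nofix_free}, and Proposition~\ref{prop:free} finishes it directly. For the inductive step the main obstacle I anticipate is making the ``free away from $X^H$'' argument yield an \emph{honest} divisibility statement despite the non-properness of $U$: the Euler characteristic is only defined for projective (or complete) varieties, so one cannot naively write ``$p \mid \chi(U, -)$''. The cleanest remedy is probably not to localize on $U$ at all, but to argue entirely by the filtration by powers of $\Ic$: set-theoretically $\Fc$ restricted to any infinitesimal neighborhood of $Z$ is an iterated extension of $\Oc_Z$-modules, and the ``remainder'' sheaf $\Fc' = \Fc / (\text{sections supported on } Z)$, while not itself killed by a power of $\Ic$, has the property that on the \emph{projective} variety $X$ the class $[\Fc'] \in \KG{X}{G}$ can be pushed into $\KG{X}{G}$ from a free quotient. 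Alternatively — and this may be the author's route — one replaces $\Fc'$ by its image in $K_0'(X;H)$ and uses that this image lies in the subgroup generated by sheaves supported where $H$ acts freely, applies the projection $K_0'(X;H) \to K_0'(X/H')$ along partial quotients, and deduces $p \mid \chi$. I expect the write-up to handle this via an exact sequence argument on $K_0'(X;G)$ rather than a geometric decomposition, keeping everything on the projective variety $X$ throughout and only invoking Proposition~\ref{prop:free} through the morphism $\Mod{G/H}{X/H} \simeq \Mod{G}{X}$ of Proposition~\ref{prop:invariant_free}.
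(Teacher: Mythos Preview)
Your localisation strategy along $Z=X^{\mu_p}$ and $U=X\setminus Z$ has a genuine gap that you correctly flag but do not close: there is no way to extract a divisibility statement for ``the $U$-part'' of $\chi(X,\Fc)$ while staying on the non-projective $U$, and the various fixes you sketch (extending from $U/H$, filtering by powers of $\Ic$, pushing classes around in $K_0'$) all run into the same circularity. Any comparison you set up between $\Fc$ and a sheaf pulled back from $X/\mu_p$ will differ by something supported on $Z$, whose Euler characteristic you have not yet controlled; conversely, controlling $Z$ by induction requires first stripping off the $U$-contribution. A secondary slip: the graded pieces $\Ic^n\Fc/\Ic^{n+1}\Fc$ on $Z$ are $G$-equivariant but \emph{not} $(G/H)$-equivariant in general, since $\mu_p$ acts on the conormal bundle through nontrivial characters; you would need an extra step (a grading argument) before invoking the inductive hypothesis on $Z$.

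The paper avoids all of this by never localising on $X$. It takes instead the \emph{large} subgroup $H=\mu_{p^{r-1}}$ and passes to the quotient $X/H$, which is again projective, carrying an action of $G/H\simeq\mu_p$. Two lemmas do the work: Proposition~\ref{prop:fix_surj} (specific to cyclic $p$-groups, using that any two nontrivial subgroups of $\Zz/p^r$ intersect nontrivially) shows that $(X/H)^{G/H}=\varnothing$; and Lemma~\ref{lemm:reduce_group} replaces the $G$-equivariant module $\psi_*\Fc$ on $X/H$ by a $(G/H)$-equivariant one with the same Euler characteristic modulo $p$. One is then reduced in a single step to the case $G=\mu_p$, where $X^G=\varnothing$ forces the action to be free by Lemma~\ref{lemm:nofix_free} and Proposition~\ref{prop:free} applies. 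The missing idea in your proposal is precisely Proposition~\ref{prop:fix_surj}: without it there is no reason the quotient (or any fixed-locus construction) should still have empty fixed locus, and indeed this fails for non-cyclic groups (cf.\ Example~\ref{ex:noncyclic}).
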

\begin{proof}
We may assume that $X \neq \varnothing$. Then $r \geq 1$, and $G$ contains a subgroup $H$ isomorphic to $\mu_{p^{r-1}}$. It follows from \rref{prop:fix_surj} below that $(X/H)^{G/H} = (X/H)^G = \varnothing$. Let $\psi \colon X \to X/H$ be the quotient morphism. By \rref{lemm:reduce_group} below, there is a $G/H$-equivariant coherent $\Oc_{X/H}$-module $\Gc$ such that $\chi(X/H,\Gc)$ is congruent modulo $p$ to $\chi(X/H,\psi_*\Fc) = \chi(X,\Fc)$. Replacing the variety $X$ with $X/H$, the group $G$ with $G/H$, and the module $\Fc$ with $\Gc$, we may assume that $G = \mu_p$. Then $G$ acts freely on $X$ by \rref{lemm:nofix_free}, and the theorem follows from \rref{prop:free}.
\end{proof}

\begin{lemma}
\label{lemm:reduce_group}
Let $G$ be an algebraic group acting on a projective variety $X$. Let $D$ be a diagonalisable central subgroup of $G$ acting trivially on $X$. Then for any prime $p$ and any $G$-equivariant coherent $\Oc_X$-module $\Fc$, we may find a $G/D$-equivariant coherent $\Oc_X$-module $\Gc$ such that
\[
\chi(X,\Fc) = \chi(X,\Gc) \mod p.
\]
\end{lemma}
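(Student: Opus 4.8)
The plan is to split $\Fc$ into its $D$-isotypic pieces, replace each piece by a tensor twist lying in $\Mod{G/D}{X}$ whenever this is possible, and otherwise show that its Euler characteristic is already divisible by $p$.

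First I would apply \rref{lemm:central_diag} to $X$ (legitimate, as $D$ is central and acts trivially on $X$): it gives a decomposition $\Fc = \bigoplus_{\lambda \in \Char{D}} \Fc_\lambda$ in $\Mod{G}{X}$, with $\Fc_\lambda$ sitting in degree $\lambda$ and, $\Fc$ being coherent, only finitely many $\Fc_\lambda$ nonzero. Since $\chi(X,\Fc) = \sum_\lambda \chi(X,\Fc_\lambda)$, it is enough to treat each $\Fc_\lambda$ separately; the summand $\Fc_0$ already lies in $\Mod{G/D}{X}$, so one may assume $\lambda$ nontrivial. The naive idea is to tensor $\Fc_\lambda$ over $k$ with a one-dimensional $G$-representation restricting to $-\lambda$ on $D$, which would yield an object of $\Mod{G/D}{X}$ with unchanged Euler characteristic. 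I expect the only real obstacle to be that such a character of $G$ need not exist.

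To circumvent this I would instead allow any finite-dimensional $G$-representation $W$ on which $D$ acts through $-\lambda$ and with $\dim_k W \equiv 1 \pmod p$: then $\Fc_\lambda \otimes_k W$ is a coherent object of $\Mod{G/D}{X}$ with Euler characteristic $(\dim_k W)\cdot\chi(X,\Fc_\lambda) \equiv \chi(X,\Fc_\lambda) \pmod p$. Passing to a direct sum of copies, such a $W$ exists precisely when some finite-dimensional $G$-representation with $D$ acting through $-\lambda$ has dimension prime to $p$. In the remaining case — every finite-dimensional $G$-representation on which $D$ acts through $-\lambda$, equivalently (by taking duals) through $\lambda$, has dimension divisible by $p$ — I would argue that $\Fc_\lambda$ contributes nothing modulo $p$: each cohomology group $H^i(X,\Fc_\lambda)$ is a finite-dimensional $G$-representation on which $D$ acts through $\lambda$, because the formation of cohomology is compatible with the $\Char{D}$-grading (for instance because, as a $D$-equivariant $\Oc_X$-module for the trivial $D$-action on $X$, $\Fc_\lambda$ is the tensor product over $k$ of an $\Oc_X$-module with trivial $D$-action and the one-dimensional $D$-representation of character $\lambda$), so $p \mid \dim_k H^i(X,\Fc_\lambda)$ for all $i$ and hence $p \mid \chi(X,\Fc_\lambda)$; here I take the zero module.

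It then remains to let $\Gc$ be the direct sum over $\lambda$ of the $G/D$-equivariant $\Oc_X$-modules produced above: it is coherent and satisfies $\chi(X,\Gc) \equiv \sum_\lambda \chi(X,\Fc_\lambda) = \chi(X,\Fc) \pmod p$. I expect the dichotomy in the previous paragraph to be the one substantive point; the remaining checks — that the $\Char{D}$-grading is respected by cohomology, that $\Fc_\lambda \otimes_k W$ is a coherent $G/D$-equivariant module, and the additivity of $\chi(X,-)$ — are routine.
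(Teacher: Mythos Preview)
Your proof is correct and follows essentially the same approach as the paper's. Both arguments reduce via \rref{lemm:central_diag} to a single isotypic piece $\Fc_\lambda$, and both hinge on the observation that each $H^i(X,\Fc_\lambda)$ is a finite-dimensional $G$-representation on which $D$ acts through $\lambda$: the paper phrases this as ``if $p\nmid\chi(X,\Fc_\lambda)$ then some $H^i(X,\Fc_\lambda)$ has dimension prime to $p$, so twist by its dual'', while you phrase the same implication contrapositively as a dichotomy on the existence of a suitable $W$; the twisting representation is in both cases (a direct sum of copies of) the dual of a cohomology group.
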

\begin{proof}
By \rref{lemm:central_diag} we may assume that the $\Char{D}$-grading of $\Fc$ is concentrated in a single degree $d \in \Char{D}$. If $\chi(X,\Fc)$ is divisible by $p$, we may simply take $\Gc=0$. Otherwise by the definition \eqref{def:Euler} there is an integer $i$ such that $\dim_k H^i(X,\Fc)$ is prime to $p$. Then $V= H^i(X,\Fc)$ is a $G$-representation whose induced $\Char{D}$-grading is concentrated in degree $d$. The group $D$ acts trivially on the $G$-equivariant coherent $\Oc_X$-module $\Fc \otimes_k V^\vee$, and the integer $\chi(X,\Fc \otimes_k V^\vee) = \chi(X,\Fc) \cdot \dim_k V^\vee$ is prime to $p$. We may thus construct $\Gc$ as a direct sum of copies of $\Fc \otimes_k V^\vee$.
\end{proof}

\begin{proposition}
\label{prop:fix_surj}
Let $p$ be a prime, and $G=\mu_{p^r}$ for some integer $r$. Let $X$ be a variety with a $G$-action and $H$ a subgroup of $G$ such that $H \neq G$. Then the morphism $X^G \to (X/H)^G$ is surjective.
\end{proposition}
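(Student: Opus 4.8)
The plan is to reduce the assertion to a statement about finite graded algebras over a field, where a Frobenius argument finishes it off. Throughout, $\psi\colon X\to X/H$ denotes the quotient and $\phi\colon X^G\to (X/H)^G$ the induced map. First I would reduce to $k$ algebraically closed: the formation of $X^G$ and of $X/H$ commutes with the faithfully flat base change $k\to\bar k$ (for the quotient because $(-)^H$ is left exact, or because $H$ is linearly reductive), and surjectivity of $\phi$ may be tested after this base change. So assume $k=\bar k$, whence $G=\mu_{p^r}=(\Zz/p^r)_k$; writing $H=\mu_{p^s}$, the hypothesis $H\neq G$ becomes $s<r$. Since $\phi$ is of finite type its image is constructible, and $(X/H)^G$ is Jacobson, so it suffices to prove that $\phi$ hits every closed point $y$ of $(X/H)^G$; such a point has $\kappa(y)=k$.

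Fix such a $y$. As $\psi$ is surjective I may choose a point of $\psi^{-1}(y)$, and then a $G$-invariant affine open $\Spec A$ of $X$ containing it (its $G$-orbit is finite, hence lies in an affine open, which one intersects with its $G$-translates; the intersection is affine by separatedness). Replacing $X$ by $\Spec A$ and $X/H$ by the open $\Spec A^H$, I reduce to $X=\Spec A$ affine. Then $G$ acts through a $\Zz/p^r$-grading $A=\bigoplus_i A_i$, the subgroup $H$ through its coarsening, and $A^H=\bigoplus_{p^s\mid i}A_i=:B$. The maximal ideal $\mathfrak m_y\subseteq B$, being the ideal of a closed $G$-fixed point, is homogeneous, so $\mathfrak m_y A$ is a homogeneous ideal and the fiber $Z:=\psi^{-1}(y)=\Spec R$ with $R=A/\mathfrak m_y A$ carries a $\Zz/p^r$-grading; moreover $R$ is finite-dimensional over $k$ because $A$ is finite over $B$. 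Since $H$ is linearly reductive one has a $B$-module splitting $A=B\oplus C$ (with $C$ the sum of the nonzero graded parts of $A$ for the $\Zz/p^s$-grading), hence $\mathfrak m_y A\cap B=\mathfrak m_y$ and $R^H=B/\mathfrak m_y=\kappa(y)=k$. In terms of the grading, $\bigoplus_{p^s\mid i}R_i=k$, i.e.\ $R_0=k$ and $R_i=0$ for every $i\neq 0$ with $p^s\mid i$. Note $Z$ is a $G$-invariant closed subscheme of $X$, so a $G$-fixed $k$-point of $Z$ is a $G$-fixed $k$-point of $X$ lying over $y$; thus it is enough to show $Z^G\neq\varnothing$.

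Now $Z^G=\Spec\bigl(R/(R\cdot R_{\neq 0})\bigr)$ where $R_{\neq 0}=\bigoplus_{i\neq 0}R_i$, and since $R$ is a nonzero finite $k$-algebra it suffices to show $R\cdot R_{\neq 0}\neq R$. Suppose $1\in R\cdot R_{\neq 0}$; taking degree-$0$ components gives $1=\sum_k a_k b_k$ with $a_k\in R_{-j_k}$, $b_k\in R_{j_k}$, $j_k\neq 0$, and discarding the (vanishing) terms with $p^s\mid j_k$ we may assume $p^s\nmid j_k$ for all $k$. Since $1\neq 0$ in $R$, some $c:=a_{k_0}b_{k_0}\in R_0$ is nonzero. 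Set $v=v_p(j_{k_0})<s$ and $e=s-v\geq 1$; then $p^e j_{k_0}$ has $p$-adic valuation exactly $s$, hence is divisible by $p^s$ and nonzero in $\Zz/p^r$ — this is where $s<r$ is used — so $R_{p^e j_{k_0}}=0$. But $c^{p^e}=a_{k_0}^{\,p^e}b_{k_0}^{\,p^e}\in R_{-p^e j_{k_0}}R_{p^e j_{k_0}}=0$, which forces $c=0$ as the field $R_0$ has no nilpotents — a contradiction. Hence $Z^G\neq\varnothing$, so $y\in\operatorname{im}\phi$, and $\phi$ is surjective.

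The main obstacle is identifying the right algebraic core: realizing that one should pass to the fiber of $X\to X/H$ over a fixed closed point, turning the question into one about a graded Artinian $k$-algebra whose positive part lives only in degrees not divisible by $p^s$, and then seeing that raising an offending degree-$0$ product to a suitable $p$-power lands it in a nonzero degree divisible by $p^s$ precisely when $H\neq G$. Once this is found, the remaining bookkeeping (descent to $\bar k$, restriction to closed points, passage to a $G$-invariant affine, and the identification $R^H=\kappa(y)$ via linear reductivity of $H$) is routine.
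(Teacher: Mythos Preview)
Your proof is correct and rests on the same algebraic core as the paper's: for any nonzero $i\in\Zz/p^r$ there is a multiple $ni$ lying in the nontrivial subgroup $H^*=p^s\Zz/p^r\Zz$ with $ni\neq 0$, so the $n$-th power of a nonzero-degree element lands in a degree that must vanish, and one concludes via ``a field has no nilpotents.'' The paper packages this more directly: rather than passing to the fiber algebra $R$ and showing $(R_{\neq 0})\neq R$, it lifts the given $K$-point of $(X/H)^G$ to a map $f\colon A\to K$ and shows immediately that $f(A_i)=0$ for every $i\neq 0$ (so the lifted point itself is $G$-fixed), avoiding the detour through linear reductivity and the Jacobson reduction.
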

\begin{proof}
Let us write $Y=X/H$. Let $y \in Y^G(K)$ for some algebraically closed field extension $K$ of $k$ with trivial $G$-action. We may lift $y \in Y(K)$ to a point $x \in X(K)$, and it will suffice to prove that $x \in X^G(K)$. To do so, we may assume that $X=\Spec A$. Then we have a $k$-algebra morphism $f \colon A \to K$ whose restriction $A^H \to K$ (see \eqref{eq:AG}) induces a $G$-equivariant morphism (of schemes), and want prove that $f$ induces a $G$-equivariant morphism.

A $G$-action on $\Spec A$ is a $k$-vector space decomposition $A = \bigoplus_{i \in \Zz/p^r} A_i$ such that $A_i \cdot A_j \subset A_{i+j}$ (see \S\ref{sect:diag}). The group $H^* = \ker (\Zz/p^r \to \Char{H})$ is nontrivial by the assumption $H \neq G$, and we have $A^H = \bigoplus_{j \in H^*} A_j$. The assumption that $f|_{A^H}$ induces a $G$-equivariant morphism means that $f|_{A_j} =0$ whenever $j \in H^* -\{0\}$. Let now $i \in \Zz/p^r - \{0\}$ and $a\in A_i$. Since any two nontrivial subgroups of a cyclic $p$-group must have a nontrivial intersection, we may find an integer $n$ such that $ni \in H^* - \{0\}$. Then $a^n \in A_{ni} $ with $ni \in H^*-\{0\}$, hence $0=f(a^n) = f(a)^n$. Since $K$ is a field, we conclude that $f(a)=0$, proving that $f$ induces a $G$-equivariant morphism.
\end{proof}

\begin{example}
\label{ex:mup_zp}
We define a group morphism $\mup \to \GL_p$ by mapping $\xi \in \mup(R)$ for a $k$-algebra $R$ to the diagonal matrix with coefficients $1,\xi,\cdots,\xi^{p-1}$. This induces an action of $\mup$ on $X=\Pp^{p-1}$. If $K$ is a field extension of $k$, then $X^{\mup}(K)$ consists of the $p$ points $[1: 0: \cdots:0], [0:1:0\cdots:0],\cdots,[0:\cdots:0:1]$.

We define a group morphism $(\Zz/p)_k \to \GL_p$ by mapping the generator to the automorphism $(x_0,\cdots,x_{p-1}) \mapsto (x_1,\cdots,x_{p-1},x_0)$. This induces an action of $(\Zz/p)_k$ on $X$. If $K$ is a field extension of $k$, then $X^{(\Zz/p)_k}(K)=\{[1:\xi:\cdots:\xi^{p-1}]|\xi \in \mup(K)\}$.

The two actions commute, inducing an action of $G=\mup \times (\Zz/p)_k$ on $X$ such that $X^G = \varnothing$, while $\chi(X,\Oc_X)=1$ is prime to $p$.
\end{example}

\begin{example}
\label{ex:noncyclic}
Assume that the field $k$ is algebraically closed of characteristic $\neq p$. Let $G$ be a constant abelian noncyclic $p$-group. Then $G$ admits a quotient isomorphic to $(\Zz/p)_k \times (\Zz/p)_k$ and thus to $\mup \times (\Zz/p)_k$ by our assumptions on $k$. Therefore by \rref{ex:mup_zp} the group $G$ acts without fixed point on $\Pp^{p-1}$, and $\chi(\Pp^{p-1},\Oc_{\Pp^{p-1}})$ is prime to $p$.
\end{example}

The next two examples show that in \rref{th:cyclic} the assumption ``$X^G =\varnothing$'' may not be weakened to the assumption ``every closed point of $X^G$ has degree divisible by $p$''.

\begin{example}
\label{ex:SB} Let $X$ be the Severi-Brauer variety of a finite-dimensional central division $k$-algebra of degree $p$. Let a $p$-group $G$ act trivially on $X$. Then $\dim X = p-1$ and the degree of every closed point of $X^G$ is divisible by $p$, but $\chi(X,\Oc_X)=1$.\end{example}

\begin{example} \label{ex:zero_cycle} 
For a less degenerate example, assume that an element $a \in k-k^p$ exists. The automorphism $(x_0,\cdots,x_{p-1}) \mapsto (x_1,\cdots,x_{p-1},ax_0)$ in $\GL_p(k)$ maps to an element of order $p$ in $\PGL_p(k)$. This defines an action of $(\Zz/p)_k$ on $X=\Pp^{p-1}$. The fixed locus $X^{(\Zz/p)_k}$ is supported on the closed point $\{[1:z:\cdots:z^{p-1}]|z^p=a\}$ of degree $p$. Thus $\chi(X,\Oc_X)=1$ is not the degree of a zero-cycle supported on $X^{(\Zz/p)_k}$.
\end{example}

\subsection{Varieties of small dimension}
\label{sect:small_dim}
Let us consider the morphism (see \S\ref{sect:KG})
\[
\delta \colon K_G(X) \xrightarrow{\eqref{eq:K_G-K'}} K_0'(X) \xrightarrow{\chi(X,-)} \Zz \to \Fp.
\]

\begin{proposition}
\label{prop:delta_fixed}
Let $G$ be a finite algebraic group acting on a projective variety $X$. Assume that $G$ contains $\mup$ as a central subgroup. If $\dim X <p-1$, then
\[
\delta\big(K_G(X)\big) = \delta\big(K_G(X^{\mup})\big) \subset \Fp.
\]
\end{proposition}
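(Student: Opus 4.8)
The plan is to exploit the localisation sequence for $K_G$ (Lemma~\ref{lemm:loc-K_G}) together with the free-action input (Proposition~\ref{prop:free}) and a dimension count forcing the relevant Euler characteristics to vanish modulo $p$ away from the $\mup$-fixed locus. Write $Y = X^{\mup}$, a $G$-invariant closed subscheme (since $\mup$ is central, $Y$ is stable under $G$), and let $u\colon U \to X$ be the open complement, also $G$-invariant. The inclusion $\delta\big(K_G(Y)\big) \subset \delta\big(K_G(X)\big)$ is immediate from functoriality of $K_G$ under the proper morphism $i\colon Y \to X$ and the fact that $\chi$ is compatible with push-forward. For the reverse inclusion, Lemma~\ref{lemm:loc-K_G} gives exactness of $K_G(Y) \xrightarrow{i_*} K_G(X) \xrightarrow{u^*} K_G(U) \to 0$, so every class in $K_G(X)$ is, modulo the image of $i_*$, pulled back from $K_G(U)$; hence it suffices to show $\delta\big(K_G(U)\big) = 0$.

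To prove $\delta\big(K_G(U)\big)=0$, the key point is that $\mup$ acts freely on $U$: by construction $U = X \setminus X^{\mup}$ has empty $\mup$-fixed locus, so by Lemma~\ref{lemm:nofix_free} (applied to the prime-order group $\mup$) the $\mup$-action on $U$ is free. Therefore, for the free $\mup$-action on $U \times T$, the quotient morphism $(U \times T)/\mup \to (U \times T)/G$ is available, and more importantly, every $G$-equivariant coherent module on $U \times T$ descends along the $\mup$-quotient. Concretely, given $\alpha \in K_G(U) = \KG{(U\times T)/G}{\,}$ — here I should be careful: $K_G(U) = K_0'\big((U\times T)/G\big)$ — I want to express $\delta(\alpha)$ as a multiple of $p$. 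The cleanest route is: by Lemma~\ref{lemm:KG-K_G} the map $\KG{U}{G} \to K_G(U)$ is surjective, so I may assume $\alpha$ comes from a genuine $G$-equivariant coherent $\Oc_U$-module $\Fc$; then $\delta(\alpha) = \chi(U', \Fc')$ mod $p$ where $\Fc'$ is the pullback to some projective model, via the identification \eqref{eq:K_G-K'}. The dimension hypothesis $\dim X < p-1$ enters here: one reduces $\delta$ to an Euler characteristic computed on a closed subvariety, and since such a subvariety has dimension $< p-1$, a degree/dimension argument à la Theorem~\ref{th:eq_deg} or Proposition~\ref{prop:free} forces divisibility by $p$.

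Let me restructure the last step more honestly, since $U$ is not projective and $\chi$ is only defined on complete varieties. The morphism \eqref{eq:K_G-K'} sends $K_G(U)$ to $K_0'(U)$, not to $K_0'$ of anything complete, so $\delta$ on $K_G(U)$ must be interpreted through a chosen projective variety $X$ containing $U$ — that is, $\delta$ on $K_G(X)$ restricted along $u^*$. So the correct formulation is: take $\alpha \in K_G(X)$ with $u^*\alpha$ arbitrary; I want $\delta(\alpha) \in \delta(K_G(Y))$. By exactness (Lemma~\ref{lemm:loc-K_G}) and the obvious fact that $i_*$ followed by $\delta$ lands in $\delta(K_G(Y))$, I reduce to showing: if $u^*\alpha = 0$ then... no — rather, I should show directly that $\delta$ factors through the quotient $K_G(X)/\mathrm{im}(i_*) \cong K_G(U)$, and that the induced map $K_G(U) \to \Fp$ is zero. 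For the latter: lift a class in $K_G(U)$ to a $G$-equivariant module $\Fc$ on $U \times T$ (using surjectivity in \S\ref{sect:KG}(ii) for $U$), extend $\Fc$ to a $G$-equivariant coherent module $\overline{\Fc}$ on $(X \times T)/G$ — here using that $\mup$ acts freely on $U \times T$ so that $\Fc$ descends and one extends the descended sheaf then pulls back — apply $\chi$ on the projective variety $(X\times T)/G$ (after passing to a suitable finite-dimensional approximation $(X \times \GL(V))/G$ and invoking \rref{lemm:pt_fil}-type compatibility), and observe that over the free locus the module is a pullback from the $\mup$-quotient, so its Euler characteristic is divisible by $p$ by Proposition~\ref{prop:free}; the dimension bound $\dim X < p-1$ guarantees the "error" cycle supported on the non-free locus $Y \times T$ contributes, after $\delta$, only elements of $\delta(K_G(Y))$, not something that spoils the congruence.

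The main obstacle I anticipate is the bookkeeping in this last paragraph: reconciling the infinite-dimensional Borel-type object $T$ with the finite-dimensional approximations $\GL(V)$, making sure that "extend a sheaf from the open complement and compute $\chi$" is legitimate, and — crucially — pinning down exactly where $\dim X < p-1$ is used (it is almost certainly used to ensure that the relevant topological filtration degree is small enough that the only obstruction to a class being "liftable with $p\mid\chi$" lives in a range controlled by Proposition~\ref{prop:free} applied to the free $\mup$-quotient, cf.\ the analogous role of the dimension bound in Theorem~\ref{th:eq_deg}). I expect the cleanest writeup to mirror the structure of the proof of Theorem~\ref{th:eq_deg}: reduce via localisation and noetherian induction to the case where $X$ has no proper $G$-invariant subscheme strictly between $Y$ and $X$, handle that minimal case by the free-action argument on $U$, and let the dimension hypothesis close the gap.
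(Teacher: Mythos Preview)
Your localisation setup is correct and matches the paper's, but the proposal has a genuine gap at the heart of the argument: you never identify the mechanism that converts the free $\mup$-action on $U$ into a divisibility statement, and your appeal to Proposition~\ref{prop:free} cannot work as stated. That proposition concerns Euler characteristics on \emph{projective} varieties with a free action; $U$ is not projective, and extending a sheaf from $U$ to $X$ destroys the freeness precisely along $Y$, so the extended Euler characteristic carries an uncontrolled contribution from $Y$. You recognise this issue but do not resolve it.

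What the paper actually does is pass to the quotient $U/\mup$ (and $X/\mup$), not stay on $U$. Writing $\phi\colon U \to U/\mup$, one has a $G$-equivariant decomposition $\phi_*\Oc_U \simeq \bigoplus_{i=0}^{p-1}\Lc^{\otimes i}$ for an invertible $G$-equivariant $\Oc_{U/\mup}$-module $\Lc$ (this uses Lemma~\ref{lemm:central_diag} and \cite[VIII, Proposition~4.1]{SGA3-1}). Given $\alpha\in K_G(X)$, lift $\alpha|_U$ to $\beta\in K_G(U/\mup)$ via the surjection $\phi^*$; the projection formula then gives
\[
\phi_*(\alpha|_U)=\sum_{i=0}^{p-1}[\Lc]^i\cap\beta=(1-[\Lc])^{p-1}\cap\beta \;+\; p\cdot Q([\Lc])\cap\beta.
\]
\emph{This} is where $\dim X<p-1$ enters: since $(1-[\Lc])\cap(-)$ lowers the topological filtration by one and $\dim U/\mup\le p-2$, the term $(1-[\Lc])^{p-1}\cap\beta$ lies in $K_G(U/\mup)_{(-1)}=0$. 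Hence $\phi_*(\alpha|_U)\in p\cdot K_G(U/\mup)$, and a second localisation on $X/\mup$ (using that $X^{\mup}\to X/\mup$ is a closed immersion with complement $U/\mup$) produces $z\in K_G(X^{\mup})$ with $\delta(\alpha)=\delta(\varphi_*\alpha)=\delta(z)$. Your sketch is missing exactly this chain: the descent to $U/\mup$, the line bundle $\Lc$, the polynomial identity $\sum_{i=0}^{p-1}t^i\equiv(1-t)^{p-1}\bmod p$, and the filtration argument. The noetherian-induction idea you float at the end is a red herring here; it is not used.
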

\begin{proof}
The action of $\mup$ on $U=X-X^{\mup}$ is free by \rref{lemm:nofix_free}. Let $\phi \colon U \to U/\mup$ be the quotient morphism. The $\Zz/p$-grading $\phi_*\Oc_U = \bigoplus_{n \in \Zz/p} (\phi_*\Oc_U)_n$ is compatible with the $G$-equivariant structure of $\phi_*\Oc_U$ by \rref{lemm:central_diag}. By \cite[VIII, Proposition 4.1]{SGA3-1} the $G$-equivariant $\Oc_{U/\mup}$-module $\Lc=(\phi_*\Oc_U)_1$ is invertible, and for $i\in \Zz$, the morphism $\Lc^{\otimes i} \to (\phi_*\Oc_U)_{i \mod p}$ is an isomorphism; in addition it is $G$-equivariant, since the product morphism of the ring $\phi_*\Oc_U$ is $G$-equivariant. We thus obtain a $G$-equivariant decomposition 
\begin{equation}
\label{eq:G_decomposition}
\phi_*\Oc_U \simeq \bigoplus_{i=0}^{p-1} \Lc^{\otimes i}.
\end{equation}

Let $\alpha \in K_G(X)$. Consider the commutative diagram
\[ \xymatrix{
K_0'(U;G)\ar[rr]^{\eqref{eq:KG-K_G}} && K_G(U)\\ 
K_0'(U/\mup;G) \ar[rr]^{\eqref{eq:KG-K_G}}\ar[u]^{\phi^*}&& K_G(U/\mup) \ar[u]_{\phi^*}
}\]
Since the upper horizontal and left vertical morphisms are surjective, respectively by \rref{lemm:KG-K_G} and \rref{prop:K_free}, it follows that the right vertical morphism is surjective as well. Thus we may find $\beta \in K_G(U/\mup)$ such that $\phi^*(\beta) = \alpha|_U \in K_G(U)$. By the projection formula \cite[IV, (2.12.4)]{sga6} and the decomposition \eqref{eq:G_decomposition}, we have in $K_G(U/\mup)$
\begin{equation}
\label{eq:proj}
\phi_*(\alpha|_U) = \phi_* \circ \phi^*(\beta) = \sum_{i=0}^{p-1} [\Lc]^i \cap \beta.
\end{equation}
There is a polynomial $Q$ with integral coefficients such that we have in $K_0(U/\mup;G)$
\begin{equation}
\label{eq:Q}
\sum_{i=0}^{p-1} [\Lc]^i = (1-[\Lc])^{p-1} +p\cdot Q([\Lc]).
\end{equation}
Since $(1-[\Lc]) \cap K_G(U/\mup)_{(n)} \subset K_G(U/\mup)_{(n-1)}$ for every $n$ (see \cite[VI, Proposition~5.2]{FL-Ri-85} or \cite[X, Th\'eor\`eme 1.3.2]{sga6}), and $\dim U/\mup \leq p-2$, we have $(1-[\Lc])^{p-1} \cap \beta \in K_G(U/\mup)_{(-1)}=0$, and the combination of \eqref{eq:proj} and \eqref{eq:Q} yields 
\[
\phi_*(\alpha|_U)  \in p  \cdot K_G(U/\mup).
\]
By \rref{lemm:loc-K_G} we can find $y \in K_G(X/\mup)$ such that $\varphi_*(\alpha) -py$ restricts to zero in $K_G(U/\mup)$, where $\varphi \colon X \to X/\mup$ denotes the quotient morphism. The morphism $i \colon X^{\mup} \to X/\mup$ is a closed immersion by \cite[(3.2.3.ii)]{isol}, with open complement $U/\mup$. By \rref{lemm:loc-K_G} we may find $z \in K_G(X^{\mup})$ such that $i_*(z)=\varphi_*(\alpha) -py$. To conclude, observe that
\[
\delta(\alpha) = \delta \circ \varphi_*(\alpha) = \delta (\varphi_*(\alpha) -py) = \delta \circ i_*(z) = \delta(z) \in \Fp.\qedhere
\]
\end{proof}

\begin{proposition}
\label{prop:denom_RR}
Let $X$ be a projective variety such that $\dim X <p-1$. Then the Euler characteristic $\chi(X,\Fc)$ of any coherent $\Oc_X$-module $\Fc$ is congruent modulo $p$ to the degree of some zero-cycle supported on $X$. 
\end{proposition}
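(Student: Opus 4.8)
The plan is to realise $\chi(X,\Fc)$ as the degree of the zero-dimensional part of a Riemann--Roch class living on $X$, and then to use the bound $\dim X<p-1$ to force that class to have denominators prime to $p$. Concretely, I would invoke the Riemann--Roch transformation $\tau_X\colon K_0'(X)\to\CH(X)\otimes\Qq$ of \cite[Chapter~18]{Ful-In-98}: it is covariant for proper morphisms, and over $X=\Spec k$ it is the inclusion $\Zz=K_0'(\Spec k)\hookrightarrow\Qq=\CH(\Spec k)\otimes\Qq$. Applying covariance to the structural morphism $X\to\Spec k$ gives $\chi(X,\Fc)=\deg(\tau_X(\Fc)_0)$, where $\tau_X(\Fc)_0$ denotes the dimension-zero component of $\tau_X(\Fc)$ and $\deg$ the pushforward to a point. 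The key claim would then be that $\tau_X(\Fc)_0=\frac{1}{n}\bar z$ in $\CH(X)\otimes\Qq$ for some integer $n$ prime to $p$ and some zero-cycle $z$ supported on $X$, of class $\bar z$. Granting this, $n\,\chi(X,\Fc)=\deg z$; choosing $n'\in\Zz$ with $nn'\equiv 1\pmod{p}$, we obtain $\chi(X,\Fc)\equiv\deg(n'z)\pmod{p}$, which is the assertion.

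To establish the claim, embed $X$ as a closed subscheme of a smooth variety $M$ (for instance a projective space) and choose a finite resolution $E_\bullet\to\Fc$ by vector bundles on $M$, so that, by the construction of $\tau$, one has $\tau_X(\Fc)=\mathrm{td}(\Tan_M|_X)\cap\mathrm{ch}^M_X(E_\bullet)$, where $\mathrm{ch}^M_X(E_\bullet)\in\CH(X)\otimes\Qq$ is the localized Chern character of \cite[\S18.1]{Ful-In-98}. Since $\CH_i(X)=0$ for $i>\dim X$, and since the component $\mathrm{td}_t$ of the Todd class intervenes only for $t\le\dim X$, all ``degrees'' $t$ occurring in the displayed formula satisfy $t\le\dim X<p-1$; and in that range the relevant denominators are prime to $p$. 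For the Todd polynomials this is classical: the denominator of $\mathrm{td}_t$ divides $\prod_{\ell}\ell^{\lfloor t/(\ell-1)\rfloor}$ (the product over primes $\ell$), whose $p$-part is $p^{\lfloor t/(p-1)\rfloor}=1$ once $t<p-1$. For the localized Chern character it is the content of Riemann--Roch without denominators, which rests on the integrality of the localized Chern classes; see \cite[\S18.1]{Ful-In-98} and \cite{sga6}. Extracting the dimension-zero part of $\tau_X(\Fc)$ then gives the claim.

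I expect the denominator estimate for possibly singular $X$ to be the main obstacle. When $X$ is smooth it is transparent, because $\tau_X(\Fc)=(\mathrm{ch}(\Fc)\cdot\mathrm{td}(\Tan_X))\cap[X]$ and the two factors have homogeneous components only in degrees $t\le\dim X<p-1$, each of denominator prime to $p$; for singular $X$ one is forced through the localized Chern character and the ``without denominators'' refinement. It may also help to first reduce, by the usual d\'evissage ($K_0'(X)$ is generated by the classes $[\Oc_Z]$ with $Z\subseteq X$ integral and closed, and $\chi(X,\Oc_Z)=\chi(Z,\Oc_Z)$), to the case $\Fc=\Oc_X$ with $X$ integral; this simplifies the bookkeeping of the Todd class, though it does not by itself remove the singularities.
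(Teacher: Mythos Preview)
Your overall plan is the right one, and it is essentially the strategy behind the reference the paper invokes: the paper's own proof is two lines---d\'evissage to $[\Oc_Z]$ and then a citation of \cite[Proposition~9.1]{firstst}, which is precisely a $p$-integrality statement for the Riemann--Roch class in dimensions below $p-1$. So you are not taking a different route; you are trying to reconstruct the content of that cited proposition.

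The gap is in the step you yourself flag as the main obstacle. Your sentence ``all `degrees' $t$ occurring in the displayed formula satisfy $t\le\dim X<p-1$'' is correct for the Todd factor but not for the localized Chern character. Write $m=\dim M$. The component of $\mathrm{ch}^M_X(E_\bullet)\cap[M]$ lying in $\CH_j(X)\otimes\Qq$ is the degree-$(m-j)$ piece of the Chern character, so its a priori denominator is $(m-j)!$, which grows with the embedding dimension $m$ and is in no way controlled by $\dim X$. The localized Chern \emph{classes} are integral (Fulton \S18.1), but the Chern \emph{character} is the Newton polynomial in those classes divided by the relevant factorial, and the factorial in question is $(m-j)!$, not $j!$. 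The standard ``Riemann--Roch without denominators'' (Jouanolou; Fulton, Example 15.3.6) is a statement about integrality of $c(i_*E)$ for a \emph{regular} closed embedding $i$; it does not bound the denominators of the localized Chern character on a singular $X$ in terms of $\dim X$.

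Of course the class $\tau_X(\Fc)$ is independent of the embedding $X\hookrightarrow M$, so the unbounded denominators must cancel; but turning that existence of cancellation into the explicit bound ``prime to $p$ once $\dim X<p-1$'' is exactly the nontrivial content of \cite{firstst}. Your d\'evissage reduction to $\Fc=\Oc_Z$ with $Z$ integral is helpful (and is what the paper does), but as you note it does not desingularise $Z$, so the smooth case you handle cleanly does not suffice. In short: the architecture of your proof is correct and matches the paper's, but the key denominator estimate for the localized Chern character is asserted rather than proved, and it does not follow from the references you give.
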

\begin{proof}
The group $K_0'(X)$ is generated by classes $[\Oc_Z]$ for $Z$ a closed subscheme of $X$ (see \cite[X, Corollaire 1.1.4]{sga6}). By \cite[Proposition~9.1]{firstst}, for such $Z$ the integer $\chi(Z,\Oc_Z)$ is congruent modulo $p$ to the degree of some zero-cycle supported on $Z$, and in particular on $X$.
\end{proof}

\begin{theorem}
\label{th:smalldim}
Let $G$ be a finite algebraic $p$-group acting on a projective variety $X$. Assume that the degree of every closed point of $X^G$ is divisible by $p$. If $\carac k=p$, assume that there are normal subgroups $1=G_0 \subset G_1 \subset \cdots \subset G_n =G$ such that the subgroup $G_i/G_{i-1}$ is central in $G/G_{i-1}$ and isomorphic to $\mu_p$, for $i=1,\cdots,n$.
\begin{enumerate}[label=(\roman*),ref=\roman*]
\item \label{th:smalldim:K}
If $\dim X < p-1$, then the Euler characteristic $\chi(X,\Fc)$ of any $G$-equivariant coherent $\Oc_X$-module is divisible by $p$.

\item \label{th:smalldim:CH} 
If $\dim X < p-1$, then the morphism $\CH_G(X) \to \CH(X) \xrightarrow{\deg} \Fp$ is zero.

\item \label{th:smalldim:Chern} 
If $\dim X < p$ and $X$ is smooth, then any Chern number (\S\ref{sect:Chern}) of $X$ is divisible by $p$.
\end{enumerate}
\end{theorem}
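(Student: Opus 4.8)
The plan is to prove \eqref{th:smalldim:K} and \eqref{th:smalldim:CH} together by induction on $n$, and then deduce \eqref{th:smalldim:Chern}. First a reduction: when $\carac k \neq p$ the group $G$ is \'etale and its centre is of multiplicative type, so a Sylow argument combined with the adjunction of a $p$-th root of unity yields a finite extension $k'/k$ of degree prime to $p$ over which $G$ acquires a central filtration $1 = G_0 \subset \cdots \subset G_n = G$ of the type in the hypothesis; such an extension preserves all the hypotheses --- in particular, by a degree count, the divisibility by $p$ of the degree of each closed point of $X^G$ --- and a restriction--corestriction argument recovers the conclusions over $k$ from those over $k'$. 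So we may assume such a filtration exists over $k$. For the base case $n = 0$ we have $G = 1$ and $X = X^G$: assertion \eqref{th:smalldim:CH} holds because the image of $\deg \colon \CH(X) \to \Fp$ is generated by the degrees of the closed points of $X$, all divisible by $p$; assertion \eqref{th:smalldim:K} holds because by \rref{prop:denom_RR} the integer $\chi(X,\Fc)$ is congruent mod $p$ to the degree of a zero-cycle supported on $X = X^G$; and \eqref{th:smalldim:Chern} holds because a Chern number is literally the degree of a zero-cycle class on $X = X^G$.

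For the inductive step ($n \geq 1$), set $N = G_1 \cong \mup$, a central subgroup of $G$, and $Y = X^N$ --- a $G$-invariant projective closed subscheme on which $N$ acts trivially, carrying a $\bar G$-action ($\bar G = G/N$, which has a filtration of length $n-1$) and satisfying $Y^{\bar G} = X^G$. For \eqref{th:smalldim:K}: the class in $\KG{X}{G}$ of a $G$-equivariant coherent $\Oc_X$-module $\Fc$ maps to $K_G(X)$, where by construction $\delta$ sends its image to $\chi(X,\Fc) \bmod p$; by \rref{prop:delta_fixed} (applicable since $\dim X < p-1$) one has $\delta(K_G(X)) = \delta(K_G(Y))$. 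Using the surjectivity of \eqref{eq:KG-K_G} and the fact that $\delta$ factors through the forgetful morphism, every element of $\delta(K_G(Y))$ is $\chi(Y,\Gc) \bmod p$ for some $G$-equivariant coherent $\Oc_Y$-module $\Gc$, which by \rref{lemm:reduce_group} (with $D = N$) may be taken $\bar G$-equivariant; hence $\delta(K_G(Y)) \subseteq \delta(K_{\bar G}(Y))$, and this vanishes by the induction hypothesis applied to $(\bar G, Y)$. Thus $p \mid \chi(X,\Fc)$. Assertion \eqref{th:smalldim:CH} is obtained by the same argument with Chow groups in place of $K_0'$, using the Chow-theoretic analogues of the construction of \S\ref{sect:KG}, of \rref{prop:delta_fixed} and of \rref{lemm:reduce_group}, together with the divisibility by $p$ of the degree of any zero-cycle supported on $X^G$.

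We now deduce \eqref{th:smalldim:Chern}. When $\dim X < p - 1$ it is immediate from \eqref{th:smalldim:CH}: $\Tan_X$ is $G$-equivariant (\rref{rem:Tan_is_equ}), so its Chern classes lie in $\CH_G(X)$ and every Chern number is the degree of a product of such classes. When $\dim X = p - 1$ we induct on $n$. Since $X$ is smooth its connected components are disjoint and smooth, and in $\Laz$ the union of a $G$-orbit of components of size $> 1$ contributes a multiple of $p$; so we may assume $X$ irreducible. If $N = G_1$ acts trivially on $X$ then $X^{\bar G} = X^G$, and we conclude by the induction hypothesis applied to $(\bar G, X)$. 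Otherwise $Y = X^N$ is a proper closed subscheme of $X$, so $\dim Y < p - 1$, and $Y$ is smooth since $N \cong \mup$ is diagonalisable, hence linearly reductive. If $Y = \varnothing$ then $N$ acts freely on $X$ (\rref{lemm:nofix_free}), so by \rref{prop:free} the integer $\chi(X,\Fc)$ is divisible by $p$ for every $N$-equivariant coherent $\Oc_X$-module $\Fc$, in particular for every bundle built functorially from the $N$-equivariant bundle $\Tan_X$; by the Hattori--Stong theorem \cite[Lemma~5.4, Proposition~5.5]{ELW} this forces $[X] \in p\Laz$, and hence every Chern number of $X$ is divisible by $p$. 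If $Y \neq \varnothing$, then deformation to the normal bundle of $Y$ in $X$ --- equivalently, the localization principle for the $N$-action --- expresses $[X] \bmod p$ as a $\Zz$-linear combination of mixed characteristic numbers $\deg\big(P(c(\Tan_Y), c(\Nc)) \cap [Y]\big)$, where $\Nc$ is the $G$-equivariant normal bundle of $Y$ in $X$; as $\Tan_Y$ and $\Nc$ are $G$-equivariant and $\dim Y < p - 1$, assertion \eqref{th:smalldim:CH} applied to $(G, Y)$ shows each of these is divisible by $p$, whence $[X] \in p\Laz$ and again every Chern number of $X$ is divisible by $p$.

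The main obstacle is this last case: making precise the localization / deformation-to-the-normal-bundle formula that expresses $[X] \bmod p$ through characteristic numbers supported on the fixed locus $X^{\mup}$ and its normal bundle, in a form valid in characteristic $p$, where $\mup$ is infinitesimal and the usual equivariant-cobordism formalism is unavailable; a plausible route is to argue through $K$-theory (an Atiyah--Segal-type concentration statement) and repackage via the Hattori--Stong theorem. A secondary technical point is the preliminary base change in characteristic $\neq p$, where one must verify both that a central filtration by copies of $\mup$ can be realized over an extension of degree prime to $p$ and that this extension preserves the divisibility of the degrees of the fixed closed points.
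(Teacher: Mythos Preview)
Your argument for \eqref{th:smalldim:K} is essentially the paper's: the paper runs a descending induction on $i$ to show $\delta(K_{G/G_i}(X^{G_i}))=0$, which unwinds to your ascending induction on $n$. For \eqref{th:smalldim:CH} you propose building Chow-theoretic analogues of \S\ref{sect:KG}, \rref{prop:delta_fixed} and \rref{lemm:reduce_group} and rerunning the argument. The paper takes a shorter route, deducing \eqref{th:smalldim:CH} directly from \eqref{th:smalldim:K}: the composite $\CH_G(X)\to\CH(X)\to\Fp$ factors through $\CH((X\times T)/G)$ (notation of \S\ref{sect:KG}), and it suffices to check vanishing on classes $[Z]$ of zero-dimensional closed subschemes; but for such $Z$ one has $\Delta[Z]=\delta[\Oc_Z]\in\delta(K_G(X))=0$. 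No parallel Chow machinery is needed.

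The real gap is in \eqref{th:smalldim:Chern}. In the case $\dim X=p-1$ with $X^{\mup}\neq\varnothing$ you rely on a localization formula expressing $[X]\bmod p$ in $\Laz$ through characteristic numbers supported on $X^{\mup}$ and its normal bundle, and you rightly flag this as the main obstacle: no such formula is established here, and in characteristic $p$ (where $\mup$ is infinitesimal and the usual equivariant-cobordism formalism is unavailable) proving one would be substantial. The paper avoids this entirely by staying in equivariant Chow groups. After splitting off the connected components on which $G_1$ acts trivially (handled by induction on $n$ for the $G/G_1$-action), one may assume $\dim X^{G_1}<p-1$; since $\Tan_X$ is $G$-equivariant it then suffices to show that $\CH_G(X)\to\CH(X)\xrightarrow{\deg}\Fp$ vanishes. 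Given $\alpha\in\CH_G(X)$, push forward along $\varphi\colon X\to X/G_1$. Over $U=X-X^{G_1}$ the $G_1$-action is free, $\phi^*\colon\CH_G(U/G_1)\to\CH_G(U)$ is surjective by \rref{lemm:free_equpb}, and $\phi_*\phi^*$ is multiplication by $p$; hence $\varphi_*(\alpha)=p\beta'+i_*(\gamma)$ for some $\gamma\in\CH_G(X^{G_1})$. Now $\deg\gamma\equiv 0\bmod p$ by \eqref{th:smalldim:CH} applied to the $G$-variety $X^{G_1}$. This is elementary and complete --- no cobordism, Hattori--Stong, or normal-bundle formula enters.
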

\begin{proof}
Let us first assume that $\carac k \neq p$. In order to prove the theorem, we may replace $k$ with an extension of degree prime to $p$. Applying \rref{lemm:subgroup} recursively and extending scalars, we obtain normal subgroups $1=G_0 \subset G_1 \subset \cdots \subset G_n =G$ such that $G_i/G_{i-1}$ is central in $G/G_{i-1}$ and isomorphic to $(\Zz/p)_k$. Enlarging once more the field $k$, we may assume that it contains a nontrivial $p$-th root of unity, so that $(\Zz/p)_k \simeq \mu_p$. 

It will thus suffice to prove the theorem when  the characteristic of $k$ is arbitrary, assuming that $G$ admits normal subgroups $1=G_0 \subset G_1 \subset \cdots \subset G_n =G$ such that the subgroup $G_i/G_{i-1}$ is central in $G/G_{i-1}$ and isomorphic to $\mu_p$.

Assume that $\dim X <p-1$. We prove by descending induction on $i=n,\cdots,0$ that
\begin{equation}
\label{eq:induction}
\delta(K_{G/G_i}(X^{G_i})) = 0.
\end{equation}
If $i=n$, then $G=G_n$, and $\delta(K_{G/G_n}(X^G))$ is contained in the image of $K_0'(X^G) \xrightarrow{\chi(X,-)} \Zz \to \Fp$. The latter is zero by \rref{prop:denom_RR} and the assumption on $X^G$, so that \eqref{eq:induction} holds for $i=n$. Now \eqref{eq:induction} follows by induction, since for $i \in \{1,\cdots,n\}$ we have
\[
\delta\big(K_{G/G_{i-1}}(X^{G_{i-1}})\big) = \delta\big(K_{G/G_{i-1}}(X^{G_i})\big) = \delta\big(K_{G/G_i}(X^{G_i})\big).
\]
Indeed, the first equality follows by applying \rref{prop:delta_fixed} to the central subgroup $G_i/G_{i-1} \simeq \mup$ of $G/G_{i-1}$ acting on the variety $X^{G_{i-1}}$. The second equality follows from \rref{lemm:reduce_group}, since the diagonalisable central subgroup $G_i/G_{i-1}\simeq \mup$ of $G/G_{i-1}$ acts trivially on the variety $X^{G_i}$.

Taking $i=0$ in \eqref{eq:induction}, we obtain $\delta(K_G(X)) = 0$. Since the morphism $\KG{X}{G} \to K_0'(X)$ factors through $K_G(X)$, we have proved \eqref{th:smalldim:K}.

Using the notation of \S\ref{sect:KG}, the morphism $\CH_G(X) \to \CH(X)$ factors as
\[
\CH_G(X) \to \CH_G(X \times T) \simeq \CH( (X \times T)/G) \to \CH(X \times T) \simeq \CH(X).
\]
The last isomorphism follows from \rref{lemm:pt} (in view of \cite[Proposition~52.9]{EKM}), and the first from \cite[Proposition 8(a)]{EG-Equ} (since $G$ acts freely on $X \times T$). Consider the composite
\[
\Delta \colon \CH((X \times T)/G) \to \CH(X \times T) \simeq \CH(X) \xrightarrow{\deg} \Fp.
\]
Since $\dim T=0$, we have $\Delta(\CH_d( (X \times T)/G)) = 0$ unless $d=0$. Thus in order to prove \eqref{th:smalldim:CH}, it will suffice to consider a zero-dimensional closed subscheme $Z$ of $(X \times T)/G$, and prove that $\Delta[Z]=0$. But for such $Z$, we have $\Delta[Z] = \delta[\Oc_Z]$, which vanishes by \eqref{th:smalldim:K}.

We now prove \eqref{th:smalldim:Chern} by induction on the length $n$ of the central series. The statement is true if $n=0$. Assume that $n \geq 1$ and $\dim X <p$. Let $Y$ be the scheme theoretic closure of $U=X-X^{G_1}$ in $X$, and $Z$ the open complement of $Y$. Then $Y$ is the union of the connected components of $X$ meeting $U$ (as $X$ is smooth), hence is open in $X$. Thus $X$ decomposes $G$-equivariantly as the disjoint union of the open subschemes $Y$ and $Z$. Every Chern number of $X$ is the sum of a Chern number of $Y$ and one of $Z$. Applying the induction hypothesis to the $G/G_1$-action on the variety $Z$ (which is a closed subscheme of $X^{G_1}$ by construction), we may replace $X$ with $Y$, and thus assume that $\dim X^{G_1} < p-1$. Since the tangent bundle of $X$ is $G$-equivariant by \rref{rem:Tan_is_equ}, its Chern classes lie in the image of $\CH_G(X) \to \CH(X)$, hence it will suffice to prove that the composite $\CH_G(X) \to \CH(X) \xrightarrow{\deg} \Fp$ is zero. To do so, we proceed as in the proof of \rref{prop:delta_fixed}. The group $G_1 \simeq \mup$ acts freely on $U$ by \rref{lemm:nofix_free}. Denote by $\varphi\colon X \to X/G_1$ and $\phi \colon U \to U/G_1$ be the quotient morphisms, and let $\alpha \in \CH_G(X)$. Then the pull-back morphism $\phi^* \colon \CH_G(U/G_1) \to \CH_G(U)$ is surjective by \rref{lemm:free_equpb}, so that we may find $\beta \in \CH_G(U/G_1)$ such that $\phi^*(\beta)=\alpha|_U$. By the localisation sequence of \S\ref{sect:CHequ_loc}, we may find $\beta' \in \CH_G(X/G_1)$ such that $\beta'|_{U/G_1} = \beta$. Since the morphism $\phi$ is flat and finite of degree $p$ by \rref{prop:G_torsor}, the endomorphism $\phi_* \circ \phi^*$ of $\CH_G(U/G_1)$ is multiplication by $p$, and it follows that $\varphi_*(\alpha) -p \beta'$ restricts to zero in $\CH_G(U/G_1)$. The morphism $i \colon X^{G_1} \to X/G_1$ is a closed immersion by \cite[(3.2.3.ii)]{isol}, with open complement $U/G_1$. Using the localisation sequence of \S\ref{sect:CHequ_loc}, we see that $\varphi_*(\alpha) = p \beta' + i_*(\gamma)$ for some $\gamma \in \CH_G(X^{G_1})$. Since $\dim X^{G_1} <p-1$, we may conclude using \eqref{th:smalldim:CH}.
\end{proof}

\begin{lemma}
\label{lemm:subgroup}
Let $G$ be an \'etale $p$-group satisfying $G \neq 1$. Then, after an extension of degree prime to $p$ of the base field, the algebraic group $G$ possesses a central subgroup isomorphic to the constant algebraic group $(\Zz/p)_k$.
\end{lemma}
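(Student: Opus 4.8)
The plan is to realise the desired subgroup by Galois descent of a line in the $p$-torsion of the geometric center of $G$.

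First I would pass to a separable closure $k_s$ of $k$. Since $G$ is étale, the base change $G_{k_s}$ is a constant group $\Gamma_{k_s}$, where $\Gamma=G(k_s)$ is an ordinary finite group whose order equals the order of $G$; as this order is a power of $p$ and $G\neq 1$, the group $\Gamma$ is a nontrivial finite $p$-group. The $k$-form $G$ of $\Gamma_{k_s}$ corresponds to a continuous action of the profinite group $\pi=\mathrm{Gal}(k_s/k)$ on $\Gamma$ by group automorphisms (Galois descent for finite étale group schemes). The center $Z(\Gamma)$ is a characteristic subgroup of $\Gamma$, hence stable under this $\pi$-action; since it is a nontrivial finite abelian $p$-group, the subgroup $N=\{x\in Z(\Gamma)\mid x^p=1\}$ is a nonzero $\Fp$-vector space on which $\pi$ acts.

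Next I would extract a nonzero fixed vector over a prime-to-$p$ extension. The $\pi$-action on the finite group $N$ factors through a finite quotient, whose image in $\GL(N)$ I call $\Pi$. Fix a Sylow $p$-subgroup $P\subseteq\Pi$, so that $[\Pi:P]$ is prime to $p$. Since $P$ is a finite $p$-group acting on the finite set $N$, the orbit-counting formula gives $|N^{P}|\equiv |N|\equiv 0\pmod p$; as $0\in N^{P}$ this forces $|N^{P}|\geq p$, so there is a vector $v\in N^{P}$ with $v\neq 0$. Let $\pi''\subseteq\pi$ be the preimage of $P$: it is an open subgroup of index $[\Pi:P]$, and the corresponding fixed field $k''=(k_s)^{\pi''}$ is a finite separable extension of $k$ of degree $[\Pi:P]$, prime to $p$.

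Finally I would descend the line $\langle v\rangle\subseteq Z(\Gamma)$. It has order $p$, is stable under $\pi''=\mathrm{Gal}(k_s/k'')$, and the $\pi''$-action on it is trivial by the choice of $v$; hence it is the geometric fiber of a subgroup scheme of $G_{k''}$ isomorphic to the constant group $(\Zz/p)_{k''}$. This subgroup scheme is central in $G_{k''}$ because $\langle v\rangle$ is contained in $Z(\Gamma)=Z(G_{k_s})$ and centrality descends along the faithfully flat extension $k_s/k''$. This proves the lemma. I do not expect a serious obstacle: the content is the combination of the nonvanishing of the center of a finite $p$-group with a fixed-point count for a Sylow subgroup. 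The one point to get right is that a \emph{prime-to-$p$} extension suffices rather than the full separable closure, and this is precisely what is achieved by replacing $\pi$ with the preimage of a Sylow $p$-subgroup of the finite image $\Pi$; the remaining steps are routine Galois-descent bookkeeping.
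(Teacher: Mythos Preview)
Your proof is correct, but it follows a different route from the paper's. The paper works directly with the centre $C$ of $G$ as an algebraic subgroup over $k$ (it is characteristic, hence already defined over $k$), and then observes that the open complement of the unit in $C$ is a finite reduced $k$-scheme of total degree $|C|-1$, which is prime to $p$; hence it has a closed point of degree prime to $p$, and extending scalars to its residue field produces a nontrivial element of $C(k')$, from which an order-$p$ element is extracted. Your argument instead translates everything into the Galois-module picture and uses a Sylow-$p$ subgroup of the image of $\mathrm{Gal}(k_s/k)$ in $\GL(N)$, together with the fixed-point count for a $p$-group acting on an $\Fp$-vector space. The paper's approach is shorter and avoids setting up the Galois-descent dictionary, while yours gives a more explicit description of the required extension (as the fixed field of the preimage of a Sylow subgroup) and isolates directly an element of order $p$ rather than passing through an arbitrary nontrivial element of $C(k')$.
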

\begin{proof}
Let $C$ be the centre of $G$ and $\overline{k}$ an algebraic closure of $k$. Then the ordinary group $C(\overline{k})$ is the centre of the nontrivial ordinary $p$-group $G(\overline{k})$, hence is nontrivial. Thus the algebraic group $C$ is nontrivial (and is again an \'etale $p$-group). The open complement of the unit in $C$ is a variety which is reduced and finite of degree prime to $p$, hence possesses a closed point of degree prime to $p$. Extending scalars, we may assume that the ordinary $p$-group $C(k)$ is nontrivial, hence contains a subgroup isomorphic to $\Zz/p$, which implies that the algebraic group $C$ contains a subgroup isomorphic to the constant algebraic group $(\Zz/p)_k$.
\end{proof}

\subsection{Characteristic \texorpdfstring{$p$}{p}}
In the next statement it is necessary to assume that $k$ is perfect, as shown by \rref{ex:SB} and \rref{ex:zero_cycle}. 

\begin{theorem}
\label{th:K:carac}
Let $k$ be a perfect field of characteristic $p>0$. Let $G$ be either a constant algebraic $p$-group, or an extension of one by $\alpha_p$. Let $X$ be a projective variety with a $G$-action such that the degree of every closed point of $X^G$ is divisible by $p$. Then the Euler characteristic $\chi(X,\Fc)$ of any $G$-equivariant coherent $\Oc_X$-module $\Fc$ is divisible by $p$.
\end{theorem}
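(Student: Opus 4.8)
The plan is to argue by induction on the order of $G$, reducing the statement to the fixed locus of a central subgroup of order $p$ and ultimately to a divisibility statement on a projective scheme all of whose closed points have degree divisible by $p$. First some harmless reductions. Since $\chi(X,\Fc)$ is insensitive to extension of scalars, and a finite extension of degree prime to $p$ can only raise the $p$-adic valuation of the degree of a closed point, I may enlarge $k$ by prime-to-$p$ extensions at will. The group $G$ is unipotent (a constant $p$-group in characteristic $p$ is unipotent, $\alpha_p$ is unipotent, and an extension of unipotent groups is unipotent), so the group algebra $k[G]=H^0(G,\Oc_G)^\vee$ is a local ring with residue field $k$ and nilpotent maximal ideal. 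If $G\neq 1$, then — after a prime-to-$p$ extension, using \rref{lemm:subgroup} for the constant part together with the fact that a constant $p$-group acting on $\alpha_p$ by conjugation acts trivially, as $\Aut(\alpha_p)=\Gm$ is $p$-torsion free — the group $G$ contains a central subgroup $H$ of order $p$, isomorphic to $(\Zz/p)_k$ or to $\alpha_p$ (not to $\mup$, since $G$ is unipotent); moreover $G/H$ is again of the type allowed in the theorem, and $(X^H)^{G/H}=X^G$, so the hypothesis is inherited by the projective $G/H$-variety $X^H$.

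The base case is $G=1$, which asserts that a projective variety $X$ over a perfect field of characteristic $p$ all of whose closed points have degree divisible by $p$ satisfies $\chi(X,\Fc)\equiv 0\pmod p$ for every coherent $\Fc$. Here one reduces by dévissage to $\Fc=\Oc_Z$ for $Z\subseteq X$ a closed integral subscheme, and invokes the characteristic-$p$ form of the Riemann–Roch denominator estimate — that $\chi(Z,\Oc_Z)$ is congruent modulo $p$ to the degree of a zero-cycle on $Z$, which over a perfect field is obtained via the relative Frobenius, cf.\ \cite{firstst} — together with the fact that every closed point of $Z$ is a closed point of $X$, hence of degree divisible by $p$, so that all such zero-cycle degrees are divisible by $p$. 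That perfectness is indispensable here is exactly what \rref{ex:SB} and \rref{ex:zero_cycle} show.

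For the inductive step I compare $\Fc$ with sheaves on $X^H$ and on the quotient $\varphi\colon X\to X/H$. Put $U=X-X^H$; by \rref{lemm:nofix_free} the subgroup $H$ acts freely on $U$, and the equivariant localisation sequence for $X^H\hookrightarrow X\hookleftarrow U$ yields $[\Fc]=i_*(a)+\tilde b$ in $K_0'(X;G)$ with $a\in K_0'(X^H;G)$ and $\tilde b$ restricting to $[\Fc|_U]$. Since $H$ acts trivially on $X^H$, the class $a$ is represented by an $H$-equivariant coherent sheaf, i.e.\ a sheaf of modules over $\Oc_{X^H}\otimes_k k[H]$; filtering by powers of the nilpotent, $G$-stable maximal ideal $\mathfrak m\subset k[H]$ produces $G/H$-equivariant graded pieces, so $\chi(X^H,a)$ is a finite sum of Euler characteristics of $G/H$-equivariant coherent sheaves on $X^H$, each divisible by $p$ by the inductive hypothesis. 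For $\tilde b$ I push forward along $\varphi$: the essential observation is that an $H$-torsor for $H\cong(\Zz/p)_k$ is Zariski-locally of Artin–Schreier form $z^p-z=a$, and for $H\cong\alpha_p$ of Frobenius form $z^p=a$, with unipotent transition isomorphisms $z\mapsto z+c$; hence $\phi_*\Oc_U$ (where $\phi\colon U\to U/H$) carries a canonical $G$-equivariant filtration with trivial graded pieces, so that, using \rref{prop:invariant_free} to descend $\Fc|_U$ to $U/H$, the class $(\varphi_*\Fc)|_{U/H}$ lies in $p\,K_0'(U/H;G/H)$. Combining the localisation sequence on the projective variety $X/H$, \rref{lemm:quotient_radicial} (which identifies the closed complement of $U/H$ in $X/H$ with the radicial image of $X^H$), and the projection formula, one reduces $\chi(X,\tilde b)$ modulo $p$ to the Euler characteristic of a $G/H$-equivariant sheaf pushed forward from $X^H$, and the induction closes. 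When $X^G=\varnothing$ this bookkeeping collapses: the fixed loci involved remain empty throughout, so the $X^H$-part vanishes and the $X/H$-part follows from \rref{prop:free}, with no appeal to perfectness.

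I expect the main obstacle to be precisely this free-locus reduction, because in characteristic $p$ the morphism $X^H\to X/H$ is only radicial (\rref{lemm:quotient_radicial}) and not a closed immersion — as it would be for $\mup$ in characteristic $\neq p$ — so that passing to the quotient can a priori destroy the divisibility by $p$ of degrees of closed points; surmounting this is exactly what forces the use of perfectness, via the characteristic-$p$ Riemann–Roch estimate of the base case, rather than a purely formal induction, and is why this case admits no proof uniform with the abelian and low-dimensional ones.
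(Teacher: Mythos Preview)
Your overall architecture matches the paper's: induction on $|G|$, the base case $G=1$ is Rost's lemma (\rref{lemm:Rost}), and the inductive step peels off a normal order-$p$ subgroup $H\cong(\Zz/p)_k$ or $\alpha_p$, using free action on $U=X-X^H$ to kill the contribution there modulo $p$. Your observation that $\alpha_p$ is automatically central because $\Aut(\alpha_p)=\Gm$ has no $p$-torsion is correct (and a bit sharper than the paper, which only needs normality).

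The gap is in how you close the induction for the $\tilde b$ part. After pushing forward to $X/H$ and using the localisation sequence there, you land with a $G/H$-equivariant class $c$ on the scheme-theoretic image $Y$ of $X^H$ in $X/H$, and you assert that $c$ is ``pushed forward from $X^H$'', so that induction on $X^H$ applies. This is not justified: the finite radicial surjection $g\colon X^H\to Y$ need not induce a surjection on $K_0'$. Already for $H=\alpha_p$ acting on $\Aa^2$ by $a\cdot(x,y)=(x,y+ax)$ one gets $X^H=\{x=0\}\simeq\Aa^1_y$, $Y\simeq\Aa^1_{y^p}$, and $g$ is the Frobenius, whose pushforward on $K_0'$ is multiplication by $p$. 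So an arbitrary class on $Y$ is not in the image of $g_*$, and your split $[\Fc]=i_*(a)+\tilde b$ does not circumvent the need to run the induction on $Y$ itself rather than on $X^H$.

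What the paper does instead is exactly that: apply the induction hypothesis to the $G/H$-variety $Y$. This requires checking that every closed point of $Y^{G/H}$ has degree divisible by $p$, and here is the second (and essential) use of perfectness that your last paragraph misses. Since $g\colon X^H\to Y$ is surjective and radicial (\rref{lemm:quotient_radicial}) and $G/H$ is smooth (being constant), \rref{lemm:smooth_radicial} lifts any $y\in Y^{G/H}$ with $k(y)$ perfect to a point of $(X^H)^{G/H}=X^G$ with the \emph{same} residue field; perfectness of $k$ ensures $k(y)$ is perfect. This is how the degree hypothesis transfers from $X^G$ to $Y^{G/H}$. Your proposal locates perfectness only in the base case, but it is this lifting step in the inductive reduction that fails over an imperfect field and accounts for \rref{ex:zero_cycle}.
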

\begin{proof}
We proceed by induction on the order of $G$. When the group $G$ is trivial, the statement follows from \rref{lemm:Rost} below. Note that referring to that result is unnecessary when $k$ is algebraically closed (in particular while proving \rref{th:constant_CH}): since the variety $X$ possesses no closed point of degree prime to $p$, it must be empty, so that $\chi(X,\Fc)=0$.

If the group $G$ is nontrivial, it admits a normal subgroup $C$ isomorphic to $\alpha_p$ or $(\Zz/p)_k$, and such that $G/C$ is a constant $p$-group. Let $Y$ be the scheme theoretic image of the composite morphism $X^C \to X \to X/C$, and denote by $\varphi \colon X \to X/C$ the quotient morphism. Let $U=\varphi^{-1}(X/C - Y)$, and $\psi \colon U \to U/C = X/C - Y$ the restriction of $\varphi$. Since $U^C = \varnothing$, it follows from \rref{lemm:nofix_free} that the group $C$ acts freely on $U$. Thus by \rref{prop:K_free}, there is an element $\beta \in \KG{U/C}{G}$ such that $[\Fc]|_U  = \psi^*(\beta)$ in $\KG{U}{G}$. By \rref{lemm:quotient_unipotent}, we have $\psi_*([\Fc]|_U) = p\beta \in K_0'(U/C;G)$. By the localisation sequence \cite[Theorem~2.7]{Thomason-group}, it follows that $\varphi_*[\Fc] \in K_0'(X/C;G)$ is modulo $p$ the image of an element of $K_0'(Y;G)$, and thus by \rref{lemm:trivial_unipotent} of $K_0'(Y;G/C)$. Since $\chi(X,\Fc) = \chi(X/C,\varphi_*\Fc)$, it will suffice to prove that $\chi(Y,\Ec)$ is divisible by $p$ for any $G/C$-equivariant coherent $\Oc_Y$-module $\Ec$. To do so, it will suffice by induction to prove that the degree of any closed point $y\in Y^{G/C}$ is divisible by $p$. The field $k(y)$ is perfect, being a finite extension of the perfect field $k$. The $G/C$-equivariant morphism $X^C \to Y$ is surjective, and radicial by \rref{lemm:quotient_radicial}. Therefore by \rref{lemm:smooth_radicial} (applied to the smooth group $G/C$) we may find a closed point of $(X^C)^{G/C} = X^G$ (mapping to $y$) with residue field isomorphic to $k(y)$, so that the degree of $y$ must be divisible by $p$, by our assumption on $X^G$.
\end{proof}

The next lemma is due to Rost \cite[Corollary~1]{Ros-On-08}.
\begin{lemma}
\label{lemm:Rost}
Let $k$ be a perfect field of characteristic $p>0$. Let $X$ be a projective variety, and $n$ a positive integer. If the degree of every closed point of $X$ is divisible by $p^n$, then so is the Euler characteristic $\chi(X,\Fc)$ of any coherent $\Oc_X$-module $\Fc$.
\end{lemma}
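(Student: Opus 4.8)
The plan is to deduce this from the finiteness of the Frobenius morphism over a perfect field, by induction on $\dim X$.

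First I would reduce to the case where $\Fc = \Oc_X$ and $X$ is integral. Since $K_0'(X)$ is generated by the classes $[\Oc_Z]$ with $Z$ ranging over the integral closed subschemes of $X$ (see \cite[X, Corollaire 1.1.4]{sga6}), and $\chi(X,\Oc_Z) = \chi(Z,\Oc_Z)$, it suffices to prove the divisibility of $\chi(Z,\Oc_Z)$ for each such $Z$; and $Z$ is again a projective variety over the perfect field $k$ all of whose closed points have degree divisible by $p^n$. So we may assume $X$ is integral, and induct on $d = \dim X$. The case $d = 0$ is immediate, since then $\chi(X,\Oc_X) = \dim_k H^0(X,\Oc_X)$ is a multiple of the degree of the unique closed point of $X$.

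For $d \geq 1$ I would use the relative Frobenius. Let $X^{(p)} = X \times_{\Spec k,\, \mathrm{Frob}_k} \Spec k$ be the Frobenius twist and $F\colon X \to X^{(p)}$ the relative Frobenius, a $k$-morphism. Because $k$ is perfect, $F$ is finite, and $k(X)$ is separably generated over $k$, so the rank of $F_*\Oc_X$ at the generic point of the integral scheme $X^{(p)}$ is $[k(X):k(X)^p] = p^{d}$. Hence $F_*\Oc_X$ is free of rank $p^d$ over some dense open $V \subseteq X^{(p)}$, and the localisation sequence gives $[F_*\Oc_X] - p^d[\Oc_{X^{(p)}}] \in K_0'(X^{(p)})_{(d-1)}$, i.e.\ this difference is a $\Zz$-linear combination of classes $[\Oc_{W_j}]$ with $W_j \subsetneq X^{(p)}$ closed of dimension $<d$. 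Since $F$ is affine we have $\chi(X^{(p)}, F_*\Oc_X) = \chi(X,\Oc_X)$, so applying $\chi(X^{(p)},-)$ yields
\[
\chi(X,\Oc_X) = p^d \cdot \chi(X^{(p)},\Oc_{X^{(p)}}) + \sum_j n_j\, \chi(W_j,\Oc_{W_j}).
\]
Now $X^{(p)}$ is isomorphic to $X$ as a scheme (as $\mathrm{Frob}_k$ is an isomorphism), whence $\chi(X^{(p)},\Oc_{X^{(p)}}) = \chi(X,\Oc_X)$ and every closed point of $X^{(p)}$ has the same degree as the corresponding closed point of $X$; so each $W_j$ satisfies the hypotheses of the lemma in dimension $<d$, and by induction $\chi(W_j,\Oc_{W_j}) \equiv 0 \bmod p^n$. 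Thus $(1 - p^d)\chi(X,\Oc_X) \equiv 0 \bmod p^n$, and since $1 - p^d$ is coprime to $p$ for $d \geq 1$, it is invertible modulo $p^n$, giving $\chi(X,\Oc_X) \equiv 0 \bmod p^n$.

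The essential point — and the place where the perfectness of $k$ is used — is the identification of the generic rank of $F_*\Oc_X$ with $p^{\dim X}$: over an imperfect field $k(X)^p$ can be strictly smaller than a field of index $p^{\dim X}$, and indeed the conclusion then fails (see \rref{ex:SB} and \rref{ex:zero_cycle}). The other point requiring care is the bookkeeping with the Frobenius twist $X^{(p)}$: one must check that passing from $X$ to $X^{(p)}$ alters neither $\chi(-,\Oc)$ nor the degrees of closed points, which both follow from the invariance of $\dim_k$ under the field automorphism $\mathrm{Frob}_k$ of $k$.
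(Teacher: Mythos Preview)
Your proof is correct and follows essentially the same route as the paper's: reduce to $X$ integral and $\Fc=\Oc_X$ via the generators of $K_0'(X)$, then use the relative Frobenius $F\colon X\to X^{(p)}$, the generic rank $p^d$ of $F_*\Oc_X$, localisation to push the difference $[F_*\Oc_X]-p^d[\Oc_{X^{(p)}}]$ onto a lower-dimensional closed subset, and conclude from $(1-p^d)$ being a $p$-adic unit. The only cosmetic difference is that the paper cites \cite[Corollary~3.2.27]{Liu} for the generic rank while you compute it as $[k(X):k(X)^p]=p^d$ via a separating transcendence basis; both are valid.
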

\begin{proof}
We proceed by induction on $d =\dim X$. Since the group $K_0'(X)$ is generated by classes $[\Oc_Z]$ for $Z$ an integral closed subscheme of $X$ (see \cite[X, Corollaire 1.1.4]{sga6}), we may assume that $X$ is integral and that $\Fc = \Oc_X$. The case $d=0$ is clear, since $\chi(X,\Oc_X) = [k(X):k]$. Assume that $d>0$. Denote by $X^{(p)} = X \times_{\Spec k} \Spec k$ the base-change of $X$ under the absolute Frobenius of $\Spec k$. Since $k$ is perfect, the first projection $\pi \colon X^{(p)} \to X$ is an $\Fp$-isomorphism. In particular $X^{(p)}$ is integral; let $\eta$ be its generic point. The relative Frobenius $F \colon X \to X^{(p)}$ is a finite $k$-morphism. By \cite[Corollary~3.2.27]{Liu} the $k(X^{(p)})$-vector space $(F_*\Oc_X)_\eta = k(X)$ has dimension $p^d$, hence there is a nonempty open subscheme $V$ of $X^{(p)}$ such that the $\Oc_V$-module $(F_*\Oc_X)|_V$ is free of rank $p^d$. By the localisation sequence for $K_0'$ (see \cite[Proposition 7.3.2]{Qui-72}), this implies that $[F_*\Oc_X] - p^d [\Oc_{X^{(p)}}] = i_*\varepsilon \in K_0'(X^{(p)})$ for some $\varepsilon \in K_0'(Y)$, where $i\colon Y \to X^{(p)}$ is the immersion of the reduced closed complement of $V$. The composite $\pi \circ i \colon Y \to X^{(p)} \to X$ maps closed points to closed points (being a closed immersion), multiplying their degrees by $[k:k^p]=1$. Thus the degree of every closed point of $Y$ is a multiple of $p^n$, hence $p^n$ divides $\chi(Y,\varepsilon)$ by induction. It follows from \eqref{def:Euler} and \cite[Corollary 5.2.27]{Liu} that $\chi(X^{(p)},\Oc_{X^{(p)}}) = \chi(X,\Oc_X)$. We have $\chi(X^{(p)},F_*\Oc_X)=\chi(X,\Oc_X)$, because $F$ is a finite $k$-morphism. Thus 
\[
\chi(Y,\varepsilon) = \chi(X^{(p)},i_*\varepsilon) = \chi(X^{(p)},F_*\Oc_X) - p^d \chi(X^{(p)},\Oc_{X^{(p)}}) = (1-p^d)\chi(X,\Oc_X).
\]
Since $d>0$ and $p^n$ divides $\chi(Y,\varepsilon)$, we conclude that $p^n$ must divide $\chi(X,\Oc_X)$.
\end{proof}

\begin{lemma}
\label{lemm:trivial_unipotent}
Let $G$ be an algebraic group acting on a variety $X$. Let $H$ be a normal unipotent subgroup of $G$ acting trivially on $X$. Then the natural morphism $\KG{X}{G/H} \to \KG{X}{G}$ 
is surjective.
\end{lemma}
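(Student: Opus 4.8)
The plan is to show that every generating class $[\Fc] \in \KG{X}{G}$, where $\Fc$ is a $G$-equivariant coherent $\Oc_X$-module, belongs to the image of the natural morphism $\KG{X}{G/H} \to \KG{X}{G}$. Since $H$ acts trivially on $X$, the functor $(-)^H \colon \Mod{G}{X} \to \Mod{G/H}{X}$ of \rref{def:invariants} is at our disposal, and $\Fc^H$ is a $G$-equivariant $\Oc_X$-submodule of $\Fc$, coherent because $X$ is noetherian. First I would construct a filtration $0 = \Fc_0 \subset \Fc_1 \subset \cdots$ of $\Fc$ by $G$-equivariant coherent submodules, recursively: $\Fc_i$ is the preimage in $\Fc$ of the submodule $(\Fc/\Fc_{i-1})^H$ of $\Fc/\Fc_{i-1}$, so that $\Fc_i/\Fc_{i-1} \simeq (\Fc/\Fc_{i-1})^H$ lies in $\Mod{G/H}{X}$. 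The crucial input, to be proved separately, is that $\Gc^H \neq 0$ for every nonzero object $\Gc$ of $\Mod{G}{X}$; granting it, each inclusion $\Fc_{i-1} \subset \Fc_i$ is strict as long as $\Fc/\Fc_{i-1} \neq 0$, and since $\Fc$ is a noetherian module the filtration stops, say $\Fc_m = \Fc$. Then $[\Fc] = \sum_{i=1}^m [\Fc_i/\Fc_{i-1}]$ in $\KG{X}{G}$, and each summand is the image of the corresponding class in $\KG{X}{G/H}$, which gives the result.

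It remains to see that $\Gc^H \neq 0$ whenever $\Gc \neq 0$. Choose an affine open $U \subset X$ with $\Gc(U) \neq 0$; such a $U$ is automatically $H$-invariant, since $H$ acts trivially on $X$. By \rref{rem:invariant_sections} it is enough to show $\Gc(U)^H \neq 0$. Restricting the $H$-equivariant structure of $\Gc$ to $H \times U$ — where the action and projection morphisms coincide — equips the $\Oc_X(U)$-module $\Gc(U)$ with a structure of comodule over $H^0(H,\Oc_H)$, that is, of a representation of $H$. This representation is nonzero, and $H$ is unipotent, so it contains a nonzero trivial subrepresentation; hence $\Gc(U)^H \neq 0$. (If one uses the finite-dimensional formulation of unipotence, one first passes to a nonzero finite-dimensional subrepresentation of $\Gc(U)$, available since every representation of $H$ is the filtered union of its finite-dimensional subrepresentations.)

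I expect the nonvanishing of $\Gc^H$ to be the only genuine difficulty, the subtle point being that $\Gc(U)$ is in general infinite-dimensional over $k$, so one must either invoke unipotence for arbitrary representations or reduce to the finite-dimensional case via the fundamental theorem of comodules. Everything else is routine: coherence and $G$-equivariance of $\Fc^H$ and of the successive quotients follow from noetherianity of $X$ and the formalism of \S\ref{sect:equ-modules}; the embedding $(\Fc/\Fc_{i-1})^H \hookrightarrow \Fc/\Fc_{i-1}$ is built into the equalizer definition of $(-)^H$; and stabilisation of the filtration is immediate.
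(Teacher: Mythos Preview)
Your proof is correct and follows essentially the same approach as the paper: both build a finite filtration of $\Fc$ whose successive subquotients are the $H$-invariants of the remaining piece, and both establish termination via the key claim that $\Gc^H \neq 0$ for nonzero $\Gc$, proved by passing to sections over an affine open and invoking unipotence. The only cosmetic difference is that the paper phrases the construction as a decreasing sequence of quotients $\Fc_0=\Fc,\ \Fc_{i+1}=\Fc_i/(\Fc_i)^H$ rather than your increasing chain of submodules, but the two are dual and the noetherian termination argument is the same.
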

\begin{proof}
Let $\Fc$ be a $G$-equivariant coherent $\Oc_X$-module. We construct inductively $G$-equivariant coherent $\Oc_X$-modules $\Fc_i$ for $i \in \mathbb{N}$ as follows. Let $\Fc_0=\Fc$. Assuming $\Fc_i$ constructed, we consider its $G$-equivariant coherent $\Oc_X$-submodule $(\Fc_i)^H$ (see \rref{def:invariants}), and define $\Fc_{i+1} = \Fc_i/(\Fc_i)^H$. As $\Fc$ is coherent and $X$ noetherian, there is an integer $n$ such that the morphism $\Fc_i \to \Fc_{i+1}$ is an isomorphism for all $i \geq n$. Thus we have $(\Fc_i)^H = 0$ for $i \geq n$. We claim that this implies that $\Fc_i=0$ for such $i$. Indeed, let $U$ be an open subscheme of $X$ such that $\Fc_i(U) \neq 0$. Then $\Fc_i(U)$ is a nonzero $H$-representation over $k$, and since $H$ is unipotent, it follows that $\Fc_i(U)^H\neq 0$. But $(\Fc_i)^H(U) = \Fc_i(U)^H$ by \rref{rem:invariant_sections}, a contradiction.

Using the relations $[\Fc_i] = [\Fc_{i+1}] + [(\Fc_i)^H]$ for $i=0,\cdots,n-1$ and $[\Fc_n]=0$ in $\KG{X}{G}$, we see that $[\Fc]=[\Fc_0] \in \KG{X}{G}$ is the image of an element of $\KG{X}{G/H}$, namely $[(\Fc_0)^H] +\cdots + [(\Fc_{n-1})^H]$.
\end{proof}

\begin{lemma}
\label{lemm:quotient_unipotent}
Let $G$ be a unipotent algebraic group acting on a variety $X$. Let $H$ be a finite normal subgroup of $G$ acting freely on $X$. Let $\varphi \colon X \to X/H$ be the quotient morphism. Then the endomorphism $\varphi_*\circ \varphi^*$ of $\KG{X/H}{G}$ is multiplication by the order of $H$.
\end{lemma}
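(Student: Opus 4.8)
The plan is to reduce, via the projection formula, to a statement about the $\Oc_{X/H}$-module $\varphi_*\Oc_X$, and then to dissect that module using the $\mathfrak{m}$-adic filtration coming from the group algebra of $H$.

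First I would set up the reduction. Write $Y=X/H$, let $n$ be the order of $H$, and put $\Nc=\varphi_*\Oc_X$. By \rref{prop:G_torsor} the morphism $\varphi$ is finite and flat of degree $n$, so $\Nc$ is locally free of rank $n$, and $\varphi$ being affine yields a natural---hence $G$-equivariant---isomorphism $\varphi_*\varphi^*\Fc\simeq\Fc\otimes_{\Oc_Y}\Nc$ for $\Fc\in\Mod{G}{Y}$. Since the classes $[\Fc]$ generate $\KG{Y}{G}$, it suffices to produce a filtration $0=\Nc_0\subset\Nc_1\subset\cdots\subset\Nc_n=\Nc$ by $G$-equivariant sub-bundles with every quotient $\Nc_j/\Nc_{j-1}$ isomorphic to $\Oc_Y$ in $\Mod{G}{Y}$: tensoring it by $\Fc$ (each step being locally split) exhibits $\varphi_*\varphi^*\Fc$ as an iterated extension of $n$ copies of $\Fc$, so that $\varphi_*\varphi^*[\Fc]=n[\Fc]$.

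To construct $\Nc_\bullet$, I would exploit that, $H$ acting trivially on $Y$, restricting the $G$-equivariant structure of $\Nc$ along $H\subset G$ makes $\Nc$ a module---by $\Oc_Y$-linear endomorphisms---over the finite-dimensional $k$-algebra $R=H^0(H,\Oc_H)^\vee$. As $H$ is unipotent, the trivial representation is its only simple representation, so $R$ is local with nilpotent maximal ideal $\mathfrak{m}$ (the augmentation ideal). Because $H$ is normal in $G$, the action map $R\otimes_k\Nc\to\Nc$ is equivariant for the conjugation action of $G$ on $R$ (compare \rref{lemm:central_diag}), and $\mathfrak{m}$ is stable under every automorphism of $H$; hence the terms $\Nc^{(i)}=\mathfrak{m}^i\Nc$ of the $\mathfrak{m}$-adic filtration are $G$-equivariant $\Oc_Y$-submodules, with $\Nc^{(N)}=0$ for $N\gg0$. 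That these are sub-bundles would follow from a fibrewise computation: for $y\in Y$ the fibre $\Nc\otimes k(y)$ is the coordinate ring of the fibre of $\varphi$ over $y$, a torsor under $H_{k(y)}$ over the field $k(y)$, hence free of rank one over $R\otimes_k k(y)=k(y)[H]$ by the normal basis theorem; so $\Nc^{(i)}\otimes k(y)$ has dimension $\dim_k\mathfrak{m}^i$ independently of $y$, the map $\mathfrak{m}^i\otimes_k\Nc\to\Nc$ has constant rank, $\Nc^{(i)}$ is a sub-bundle, and $\mathrm{gr}^i\Nc:=\Nc^{(i)}/\Nc^{(i+1)}$ is a vector bundle of rank $d_i:=\dim_k(\mathfrak{m}^i/\mathfrak{m}^{i+1})$ with $\sum_i d_i=n$.

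Next I would identify the graded pieces. Since $(h-1)\mathfrak{m}^i\subset\mathfrak{m}^{i+1}$, the subgroup $H$ acts trivially on $\mathfrak{m}^i/\mathfrak{m}^{i+1}$, hence on $\mathrm{gr}^i\Nc$, which thus lies in $\Mod{G/H}{Y}$; moreover $\mathfrak{m}^i/\mathfrak{m}^{i+1}$ carries a natural $G/H$-representation structure induced by conjugation on $H$. Using the equivalence $\Mod{G/H}{Y}\subset\Mod{G}{Y}\xrightarrow{\varphi^*}\Mod{G}{X}$ of \rref{prop:invariant_free}, I would check that $\mathrm{gr}^i\Nc$ is the bundle associated with this $G/H$-representation via the $G$-equivariant torsor $\varphi$, that is, $\mathrm{gr}^i\Nc\simeq(\mathfrak{m}^i/\mathfrak{m}^{i+1})\otimes_k\Oc_Y$ in $\Mod{G/H}{Y}$, with $G/H$ acting on the first factor only. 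Finally, $G/H$ is unipotent (a quotient of $G$), so the $G/H$-representation $\mathfrak{m}^i/\mathfrak{m}^{i+1}$ has a filtration with trivial one-dimensional quotients; tensoring it by $\Oc_Y$ refines $\mathrm{gr}^i\Nc$ by $G/H$-equivariant sub-bundles with successive quotients $\cong\Oc_Y$. Splicing these refinements into the $\mathfrak{m}$-adic filtration produces $\Nc_\bullet$, with $\sum_i d_i=n$ quotients, each $\cong\Oc_Y$ in $\Mod{G/H}{Y}\subset\Mod{G}{Y}$.

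The hard part will be the two geometric inputs: that the $\mathfrak{m}$-adic filtration consists of \emph{sub-bundles} (which rests on the fibrewise identification of $\Nc$ with the regular representation, a normal-basis-theorem statement that must hold over arbitrary, possibly imperfect, residue fields), and that the graded pieces are ``constant'' bundles of the form $W\otimes_k\Oc_Y$---without the latter one could not use the unipotence of $G/H$, since a general $G/H$-equivariant bundle admits no filtration with quotients $\Oc_Y$. The remaining ingredients (the projection-formula reduction, locality and nilpotence in $R$, and the compatibility of the $R$-action with the $G$-structure) are routine.
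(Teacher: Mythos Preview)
Your approach is plausible and can likely be pushed through, but the paper takes a considerably shorter route that sidesteps both of the ``hard parts'' you identify. Instead of filtering $\Nc=\varphi_*\Oc_X$ directly on $Y$, the paper pulls back along $\varphi$ to $X$, where the torsor trivialises: the cartesian square of Proposition~\ref{prop:G_torsor} yields a $G$-equivariant isomorphism $\varphi^*\varphi_*\varphi^*\Fc\simeq(\varphi^*\Fc)\otimes_kR$ for every $\Fc$, where $R=H^0(H,\Oc_H)$ carries the conjugation action of $G$. The crux is then that $[R]=d$ already in $\KG{\Spec k}{G}$: since $G$ is unipotent, the forgetful map $\KG{\Spec k}{G}\to K_0'(\Spec k)=\Zz$ is a retraction of the surjection of Lemma~\ref{lemm:trivial_unipotent}, hence injective, so the class of any $G$-representation equals its dimension. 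This gives $\varphi^*\circ\varphi_*\circ\varphi^*=d\cdot\varphi^*$, and one finishes by observing that $\varphi^*$ is bijective on equivariant $K$-groups (combining Proposition~\ref{prop:invariant_free} with Lemma~\ref{lemm:trivial_unipotent} again).

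Your filtration is in fact recoverable from this viewpoint: a filtration of the $G$-representation $R$ by trivial one-dimensional pieces, tensored with $\Oc_X$ and transported through the equivalence $\varphi^*\colon\Mod{G/H}{Y}\xrightarrow{\sim}\Mod{G}{X}$, gives a filtration of $\Nc$ with successive quotients $\Oc_Y$ in $\Mod{G}{Y}$. But obtained this way the existence of such a filtration is immediate, whereas your $\mathfrak{m}$-adic construction requires a normal-basis statement over every residue field and a delicate identification of the graded pieces (tracking how the $k[H]$-module structure of $\Nc$ interacts with $\varphi^*$, noting that $H$ does \emph{not} act trivially on $X$). The paper's approach replaces all of that by the single sentence ``the equivariant $K$-theory of a point is $\Zz$ for unipotent $G$''.
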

\begin{proof}
Let $R$ be the coordinate ring of $H$, and $d=\dim_k R$ the order of $H$. We let $G$ act on $H$ by $(g,h) \mapsto ghg^{-1}$. The commutative square of $G$-equivariant morphisms
\[ \xymatrix{
H \times X\ar[r]^{\alpha} \ar[d]_{\pi} & X \ar[d]^\varphi \\ 
X \ar[r]^\varphi & X/H
}\]
where $\alpha,\pi$ denote respectively the action and projection, is cartesian by \rref{prop:G_torsor}. For any $G$-equivariant coherent $\Oc_{X/H}$-module $\Fc$, the $G$-equivariant morphism of $\Oc_X$-modules 
\[
(\varphi^*\Fc) \otimes_k R = \pi_*\pi^* \varphi^*\Fc = \pi_*\alpha^* \varphi^*\Fc \to \varphi^* \varphi_*\varphi^*\Fc
\]
is an isomorphism by \cite[(1.5.2)]{ega-2}. The forgetful morphism $\KG{\Spec k}{G} \to K_0'(\Spec k)$ is injective, being a retraction of a surjective morphism by \rref{lemm:trivial_unipotent}. It follows that $[R] = d \in \KG{\Spec k}{G}$. Thus
\begin{equation}
\label{eq:phiphiphi}
\varphi^* \circ \varphi_* \circ \varphi^* = d \cdot \varphi^* \colon \KG{X/H}{G} \to \KG{X}{G}.
\end{equation}
The composite $\KG{X/H}{G/H} \to \KG{X/H}{G} \xrightarrow{\varphi^*} \KG{X}{G}$ is bijective by \rref{prop:invariant_free}, and the first morphism is surjective by \rref{lemm:trivial_unipotent}. Thus the second morphism $\varphi^*$ is bijective, and the lemma follows from \eqref{eq:phiphiphi}
\end{proof}

\begin{lemma}
\label{lemm:smooth_radicial}
Let $G$ be a smooth algebraic group and $Y \to X$ a $G$-equivariant surjective radicial morphism. Then the map $Y^G(K) \to X^G(K)$ is bijective for any perfect field extension $K/k$ with trivial $G$-action.
\end{lemma}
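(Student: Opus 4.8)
The plan is to treat the two directions of the claimed bijection separately. Injectivity is formal: $f\colon Y\to X$ is radicial, hence universally injective, so $Y(K)\to X(K)$ is injective; since $Y^G(K)$ and $X^G(K)$ sit inside $Y(K)$ and $X(K)$ compatibly with $f$, the map $Y^G(K)\to X^G(K)$ is injective. Both hypotheses --- smoothness of $G$ and perfectness of $K$ --- will be used only for surjectivity.

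So fix a $G$-equivariant morphism $\bar x\colon\Spec K\to X$ (for the trivial action on $\Spec K$), representing an element of $X^G(K)$. First I would build a lift of $\bar x$ to $Y$, using perfectness. The fibre $F=Y\times_{X,\bar x}\Spec K$ is of finite type over $K$, and the projection $F\to\Spec K$ is radicial and surjective, being a base change of $f$. Hence $F$ has a single point, whose residue field is a finite purely inseparable extension of $K$, hence equal to $K$ because $K$ is perfect. Therefore $F_{\mathrm{red}}=\Spec K$, and the composite $\Spec K=F_{\mathrm{red}}\hookrightarrow F\to Y$ is a morphism $\bar y\colon\Spec K\to Y$ with $f\circ\bar y=\bar x$ (it is the unique such lift, by radicialness of $f$).

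It remains to show that $\bar y$ is $G$-equivariant, i.e.\ lies in $Y^G(K)$. Writing $a_X,a_Y$ for the action morphisms and $\mathrm{pr}\colon G\times\Spec K\to\Spec K$ for the projection, this amounts to the equality of the two morphisms $a_Y\circ(\id_G\times\bar y)$ and $\bar y\circ\mathrm{pr}$ from $G\times\Spec K$ to $Y$. These have the same composite with $f$: one computes $f\circ a_Y\circ(\id_G\times\bar y)=a_X\circ(\id_G\times\bar x)$ using that $f$ is $G$-equivariant and $f\circ\bar y=\bar x$, and $a_X\circ(\id_G\times\bar x)=\bar x\circ\mathrm{pr}=f\circ\bar y\circ\mathrm{pr}$ using that $\bar x$ is $G$-equivariant. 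Thus the two morphisms are two sections of the base change $p\colon Y\times_X(G\times\Spec K)\to G\times\Spec K$ of $f$, which is radicial and separated. Since $G$ is smooth over $k$, the scheme $G\times\Spec K$ is smooth over $K$, hence reduced; and a separated radicial morphism onto a reduced scheme admits at most one section. Indeed any section is a closed immersion whose image is a closed subscheme with underlying space the whole source and, being isomorphic to the reduced base, is reduced, so it equals the reduction of the source; hence $p$ restricts to an isomorphism of that reduction onto the base, and the section is forced to be its inverse. Therefore the two morphisms coincide, $\bar y\in Y^G(K)$, and $\bar y$ maps to $\bar x$, proving surjectivity.

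I expect the last step to be the main obstacle: the lift $\bar y$ is pinned down only through the underlying space and the reduced structure of the fibre, so its equivariance is not obvious a priori. The point that makes it work is recognizing $G$-equivariance of $\bar y$ as the equality of two sections of a radicial morphism over $G\times\Spec K$, a scheme that is automatically reduced because $G$ is smooth --- an equality which then comes for free.
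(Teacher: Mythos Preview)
Your proof is correct and follows essentially the same strategy as the paper: take the fibre $F$ over the given point of $X^G(K)$, use perfectness of $K$ to identify $F_{\mathrm{red}}$ with $\Spec K$, and then use smoothness of $G$ to upgrade the resulting lift to a $G$-equivariant one. The only difference is in how the last step is packaged. The paper observes directly that $F$ carries a $G$-action (as the fibre over a fixed point) and that $F_{\mathrm{red}}\subset F$ is $G$-invariant because $G$ is reduced; the inverse of the isomorphism $F_{\mathrm{red}}\to\Spec K$ is then automatically $G$-equivariant. You instead unfold the equivariance condition on $\bar y$ as the equality of two sections of the radicial base change over $G\times\Spec K$, and use that this base is reduced (again by smoothness of $G$) to force uniqueness of sections. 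The two arguments are equivalent reformulations of the same idea; the paper's version is marginally shorter, while yours makes the role of separatedness and the ``unique section over a reduced base'' principle more explicit.
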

\begin{proof}
Let $F$ be the fiber of $Y \to X$ over a point in $X^G(K)$, and $F_{red}$ the underlying reduced closed subscheme. The scheme $F$, and thus also $F_{red}$, is nonempty and radicial over $\Spec K$. Since $K$ is perfect, the morphism $F_{red}\to \Spec K$ is an isomorphism. As the group $G$ is reduced, the closed subscheme $F_{red}$ of $F$ is $G$-invariant. It follows that the isomorphism $F_{red}\to \Spec K$ is $G$-equivariant. Its inverse provides a $G$-equivariant morphism $\Spec K \to F_{red} \to F \to Y$, and therefore the required $K$-point of $Y^G$.
\end{proof}

We now provide two examples showing that \rref{th:K:carac} does not generalise to arbitrary algebraic $p$-groups.

\begin{example}
\label{ex:aa}
Let $k$ be a field of characteristic two. The subgroups of $\GL_2$
\[
\begin{pmatrix}
1&\alpha_2\\
0&1
\end{pmatrix}
\text{ and }
\begin{pmatrix}
1&0\\
\alpha_2&1
\end{pmatrix}
\]
induce two actions of $\alpha_2$ on $\Pp^1$. For a field extension $K/k$ with trivial $G$-action, the sets $(\Pp^1)^{\alpha_2}(K)$ are respectively $\{[1:0]\}$ and $\{[0:1]\}$. Since the two actions commute with one another, this gives an action of $\alpha_2 \times \alpha_2$ on $\Pp^1$ without fixed point, and $\chi(\Pp^1,\Oc_{\Pp^1}) =1$ is odd.
\end{example} 

\begin{example}
\label{ex:amu}
Let $k$ be a field of characteristic two. Sending $a \in \alpha_2(R)$, for a commutative $k$-algebra $R$, to the matrix
\[
\begin{pmatrix}
1&a\\
a&1
\end{pmatrix} \in \GL_2(R).
\]
defines a group morphism $\alpha_2 \to \PGL_2$. If $K/k$ is a field extension with trivial $G$-action, the only fixed $K$-point for the induced action of $\alpha_2$ on $\Pp^1$ is $[1:1]$.

We define a group morphism $\mu_2 \to \GL_2$ by sending $\xi \in \mu_2(R)$, for a commutative $k$-algebra $R$, to the matrix
\[
\begin{pmatrix}
1&0\\
0&\xi
\end{pmatrix} \in \GL_2(R).
\]
If $K/k$ is a field extension with trivial $G$-action, the only fixed $K$-points for the induced action of $\mu_2$ on $\Pp^1$ are $[0:1]$ and $[1:0]$.

Letting $\mu_2$ act on $\alpha_2$ via the restriction of the natural action of $\Gm$ on $\Ga$, we obtain an action of $\alpha_2 \rtimes \mu_2$ on $\Pp^1$ without fixed point, and $\chi(\Pp^1,\Oc_{\Pp^1}) =1$ is odd.
\end{example}

\begin{remark}
It would be interesting to determine for which algebraic $p$-groups $G$ in characteristic $p$ does the arithmetic genus detect fixed points (in the sense of \ref{req:detect} in the introduction). We have proved that it does when $G\in \{\alpha_p \times \Zz/p, \Zz/p \times \Zz/p, \Zz/p^2,\mu_{p^2}\}$ (see \rref{th:K:carac} and \rref{th:cyclic}), but not when $G \in \{\alpha_p \times \alpha_p, \mup \times \Zz/p, \alpha_p \rtimes \mu_p\}$ (see \rref{ex:aa}, \rref{ex:mup_zp}, \rref{ex:amu})
\end{remark}

\section{Browder's theorem}
\label{sect:browder}
\numberwithin{theorem}{section}
\numberwithin{lemma}{section}
\numberwithin{proposition}{section}
\numberwithin{corollary}{section}
\numberwithin{example}{section}
\numberwithin{notation}{section}
\numberwithin{definition}{section}
\numberwithin{remark}{section}
\counterwithin{equation}{section}
\numberwithin{equation}{section}
\renewcommand{\theequation}{\thesection.{\alph{equation}}}

The next statement may be thought of as a relative version of \rref{th:eq_deg} and \dref{th:smalldim}{th:smalldim:CH}. It implies an algebraic version of a theorem of Browder in topology (see \S\ref{sect:Browder}).

\begin{theorem}
\label{th:Browder}
Let $G$ be an algebraic $p$-group and $f\colon Y \to X$ a projective $G$-equivariant morphism. Assume that $X$ is smooth, and that one of the following conditions holds:
\begin{enumerate}[label=(\roman*),ref=\roman*]
\item \label{th:Browder:trig} The group $G$ is trigonalisable.

\item \label{th:Browder:smalldim} Every fiber of $f$ has dimension $< p-1$, and $\carac k \neq p$.
\end{enumerate}
Assume the image of $f_* \colon \CH_G(Y)/p \to \CH_G(X)/p$ contains the class $[X]$. Then the morphism $(f^G)_* \colon \CH(Y^G)/p \to \CH(X^G)/p$ is surjective.
\end{theorem}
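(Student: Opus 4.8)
The plan is to reduce the assertion to the absolute statements \rref{th:eq_deg} and \dref{th:smalldim}{th:smalldim:CH}, by restricting everything to the generic points of subvarieties of $X^G$. Since $\CH(X^G)$ is generated by the classes $[V]$ of integral closed subvarieties $V\subseteq X^G$, it suffices to show that each such $[V]$ lies in the image of $(f^G)_*\colon \CH(Y^G)/p\to\CH(X^G)/p$. So fix such a $V$, let $\xi$ be its generic point, and fix $\alpha\in\CH_G(Y)/p$ with $f_*\alpha=[X]$.

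First I would transport the relation $f_*\alpha=[X]$ to the fibre over $\xi$. Since $X$ is smooth, the refined product of \S\ref{sect:equ_refined} applied to $f$ and to the closed immersion $V\hookrightarrow X$ produces a class $\alpha\cdot_f[V]\in\CH_G(f^{-1}(V))/p$, and the projection formula for this product together with $f_*\alpha=[X]$ gives $(f_V)_*(\alpha\cdot_f[V])=[X]\cdot[V]=[V]$ in $\CH_G(V)/p$, where $f_V\colon f^{-1}(V)\to V$ is the base change of $f$ and $[X]$ acts as the unit. As $V\subseteq X^G$ carries the trivial $G$-action, so does its generic point; the localisation morphism $\Spec\kappa(\xi)\to V$ is flat, and flat pull-back commutes with the proper push-forward $(f_V)_*$, so we obtain $\alpha_\xi\in\CH_G(Y_\xi)/p$ with $(f_\xi)_*\alpha_\xi=[\Spec\kappa(\xi)]$, where $Y_\xi=f^{-1}(\Spec\kappa(\xi))$ is a projective $\kappa(\xi)$-variety with a $G$-action and $f_\xi\colon Y_\xi\to\Spec\kappa(\xi)$. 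In particular the composite $\CH_G(Y_\xi)\to\CH(Y_\xi)\xrightarrow{\deg}\Fp$ is nonzero, since it sends $\alpha_\xi$ to $1$.

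Now I would invoke the absolute theorems over the field $\kappa(\xi)$. In case \rref{th:Browder:trig}, trigonalisability of $G$ is preserved by an arbitrary extension of the base field; in case \rref{th:Browder:smalldim}, $Y_\xi$ is a fibre of $f$, hence of dimension $<p-1$, and $\carac\kappa(\xi)=\carac k\neq p$. Therefore the contrapositive of \rref{th:eq_deg}, resp.\ of \dref{th:smalldim}{th:smalldim:CH}, applies to the $G$-variety $Y_\xi$ over $\kappa(\xi)$: since $\CH_G(Y_\xi)\to\Fp$ is nonzero, the fixed locus $(Y_\xi)^G$ has a closed point $z$ of degree prime to $p$ over $\kappa(\xi)$. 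Fixed loci commute with base change, so $(Y_\xi)^G=(f^G)^{-1}(V)\times_V\Spec\kappa(\xi)$; thus $z$ corresponds to a point of $(f^G)^{-1}(V)\subseteq Y^G$ lying over $\xi$, whose closure $W$ is an integral closed subvariety of $Y^G$ with $(f^G)_*[W]=[\kappa(z):\kappa(\xi)]\cdot[V]$. As $[\kappa(z):\kappa(\xi)]$ is prime to $p$, this exhibits $[V]$ in the image of $(f^G)_*\colon\CH(Y^G)/p\to\CH(X^G)/p$, which finishes the proof.

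The step requiring the most care is the transport to the generic point in the second paragraph: there is no flat or Gysin pull-back along $V\hookrightarrow X$, because the closed subscheme $X^G$ need not be regularly embedded in $X$, so smoothness of $X$ enters precisely through the refined equivariant intersection product of \S\ref{sect:equ_refined}, and one must verify that it satisfies the projection formula $(f_V)_*(\alpha\cdot_f\beta)=(f_*\alpha)\cdot\beta$. A minor additional point is to confirm that $\CH_G$ over the transcendental extension $\kappa(\xi)$ and the hypotheses of \rref{th:eq_deg} and \dref{th:smalldim}{th:smalldim:CH} behave as expected over this new base field; everything else is formal, using only the localisation sequence and the generation of $\CH(X^G)$ by fundamental classes.
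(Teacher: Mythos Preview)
Your proposal is correct and follows essentially the same route as the paper's proof: use the refined equivariant product of \S\ref{sect:equ_refined} together with the projection formula to transport the relation $f_*\alpha=[X]$ to fibres over points of $X^G$, then invoke \rref{th:eq_deg} or \dref{th:smalldim}{th:smalldim:CH} on those fibres. The paper organises the second half as a noetherian induction on closed subschemes $S\subset X^G$ using the localisation sequence (and identifies the colimit over opens with the Chow group of the generic fibre), whereas you argue directly on generators $[V]$ of $\CH(X^G)$ and spread out a closed point of $(Y_\xi)^G$ to a subvariety $W\subset Y^G$; these are equivalent, and your version is arguably cleaner. One small remark: the passage to the generic point and the compatibility of $\CH_G$ with the base change to $\kappa(\xi)$ is exactly the step the paper unfolds via an explicit diagram in the Borel approximations, so your caveat in the final paragraph is well placed.
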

\begin{proof}
If $T \to X$ is a morphism, resp.\ $x \in X$ is a point, we will denote by $f_T \colon Y_T \to T$, resp.\ $f_x \colon Y_x \to \Spec k(x)$, the base-change of $f$.

We first prove that for any point $x \in X^G$ the morphism
\begin{equation}
\label{eq:deg_fiber}
\CH_G(Y_x) \to \CH(Y_x) \xrightarrow{\deg}  \CH(\Spec k(x))/p = \Fp
\end{equation}
is surjective. Let $\alpha' \in \CH_G(Y)$ be such that $f_*(\alpha') =[X] \in \CH_G(X)/p$, and let $\alpha \in \CH(Y)$ be the image of $\alpha'$. Let $R$ be the reduced closure of $x$ in $X$. Since $f_*(\alpha) = [X] \in \CH(X)/p$, for any $\gamma \in \CH(R)/p$ we have by the projection formula \cite[8.1.1.c]{Ful-In-98}
\begin{equation}
\label{eq:g_*}
(f_R)_*(\alpha \cdot_f \gamma) = (f_*\alpha) \cdot_{\id_X} \gamma = \gamma \in \CH(R)/p.
\end{equation}
Let $\gamma' \in \CH_G(R)$ be a preimage of $\gamma$ (the morphism $\CH_G(R) \to \CH(R)$ is surjective because $G$ acts trivially on $R$). Then the element $\alpha' \cdot_f \gamma' \in \CH_G(Y_R)$ constructed in \rref{sect:equ_refined} is a preimage of $\alpha \cdot_f \gamma \in \CH(Y_R)$. In view of \eqref{eq:g_*}, this proves that the composite $\CH_G(Y_R) \to \CH(Y_R) \to \CH(R)/p$ is surjective. Let $r=\dim R$. We may find a $G$-invariant open subscheme $W$ of a finite dimensional $G$-representation over $k$ whose complement has codimension $>\dim Y_x =\dim Y_R - r$, such that $G$ acts freely on $W$. Let $w= \dim W$. It follows from \cite[Propositions 49.18 and 49.20]{EKM} that we have a commutative diagram (where $W_{k(x)} = W \times_k \Spec k(x)$)
\[ \xymatrix{
\CH_{w+r}((Y_R \times_k W)/G) \ar[r] \ar[d] & \CH_{w+r}(Y_R \times_k W) \ar[r] \ar[d] & \CH_{w+r}(R \times W)\ar[d] \\ 
\CH_w((Y_x \times_{k(x)} W_{k(x)})/G)  \ar[r] & \CH_w(Y_x \times_{k(x)} W_{k(x)}) \ar[r] & \CH_w(W_{k(x)})
}\]
The right vertical morphism is surjective by the localisation sequence \cite[Proposition~1.8]{Ful-In-98} and \cite[Proposition 52.9]{EKM}. The upper horizontal composite may be identified with the degree $r$ component of the composite $\CH_G(Y_R) \to \CH(Y_R) \to \CH(R)$. The lower horizontal composite may be identified with the degree $0$ component of the composite $\CH_G(Y_x) \to \CH(Y_x) \to \CH(\Spec k(x))$. Since the former is surjective modulo $p$, so is the latter, proving that \eqref{eq:deg_fiber} is surjective.

We now prove that for any closed subscheme $S$ of $X^G$, the morphism $\CH( (Y_S)^G)/p \to \CH(S)/p$ is surjective. The theorem will then follow from the case $S=X^G$. We proceed by noetherian induction, the case $S=\varnothing$ being clear. Let $s$ be a generic point of $S$. For any closed subscheme $Z$ of $S$ not containing $s$, we have a commutative diagram with exact rows, where $U=S-Z$ (see \cite[Propositions 1.7 and 1.8]{Ful-In-98})
\[ \xymatrix{
\CH((Y_Z)^G)/p\ar[r] \ar[d] & \CH((Y_S)^G)/p \ar[d] \ar[r] & \CH((Y_U)^G)/p \ar[r] \ar[d]&0 \\ 
\CH(Z)/p \ar[r] & \CH(S)/p  \ar[r] & \CH(U)/p \ar[r] &0
}\]
By induction the left vertical morphism is surjective, hence so is its colimit over the closed subschemes $Z$ not containing $s$. In view of \rref{th:eq_deg} in case \eqref{th:Browder:trig} and \dref{th:smalldim}{th:smalldim:CH} in case \eqref{th:Browder:smalldim}, the surjectivity of \eqref{eq:deg_fiber} for $x=s$ implies that $\CH((Y_s)^G)/p \to \CH(\Spec k(s))/p = \Fp$ is also surjective. The latter morphism may be identified with the colimit of the right vertical morphism in the above diagram. We conclude that the middle vertical morphism is surjective, as required.
\end{proof}

\begin{remark}
\label{rem:Browder}
The conclusion of the theorem implies the following:
\begin{enumerate}[label=(\roman*),ref=\roman*]
\item \label{rem:Browder:surj} $f^G$ is surjective, and in particular $\dim Y^G \geq \dim X^G$.

\item If $X^G$ is projective and possesses a closed point of degree $n$, then $Y^G$ possesses a closed point of some degree $m$ whose $p$-adic valuation satisfies $v_p(m) \leq v_p(n)$.

\item \label{rem:Browder:smooth} If $X^G$ is smooth (for instance if $p\neq \carac k$ by \cite[Proposition A.8.10 (2)]{CGP}), then the $\Zz_{(p)}$-module $\CH(X^G) \otimes \Zz_{(p)}$ is a direct summand of $\CH(Y^G)\otimes \Zz_{(p)}$. More generally $H(X^G)\otimes \Zz_{(p)}$ is a direct summand of $H(Y^G)\otimes \Zz_{(p)}$ when $H$ is a cohomology theory such as motivic cohomology, $K$-theory or algebraic cobordism (see \cite[Conditions~3.1, Lemma~4.1]{invariants} for precise conditions). 
\end{enumerate}
\end{remark}

\end{document}